%%%%%%%%%%%%%%%%%%%%%%%%%%%%%%%%%%%%%%%%%%%%%%%%%%%%%%%%%%%%%%%%
%  Title    :  The Blaschke problem for curves in the Hyperbolic plane   %
%  Authors  :  Emilio Musso and Alvaro Pampano             %
%  Journal  :                                                                           %
%%%%%%%%%%%%%%%%%%%%%%%%%%%%%%%%%%%%%%%%%%%%%%%%%%%%%%%%%%%%%%%%

\documentclass[10pt]{amsart}

%\documentclass[leqno,10pt]{article}
%\textwidth=14,5cm
%\textheight=21cm
 
\usepackage{amsmath}
\usepackage{graphicx}

\usepackage{color}
\usepackage[dvipsnames]{xcolor}

\usepackage{amssymb}

\usepackage{euscript,color}

\definecolor{red}{rgb}{1,0,0}

%%%%%%%%%%%%%%%%%%%%
\newtheorem{thm}{Theorem}[section]
\newtheorem{prop}{Proposition}
\newtheorem{lemma}[thm]{Lemma}

\theoremstyle{definition}
\newtheorem{defn}[thm]{Definition}

\theoremstyle{remark}
\newtheorem{remark}[thm]{Remark}

\numberwithin{equation}{section}

%%%%%%%%%%%%%%%%%%%%%

\def\C{\mathbb{C}}
\def\R{\mathbb{R}}

\def\Z{\mathbb{Z}}

\def\d{{\text{d}}}

\def\G{\mathfrak{G}}

\def\O{\mathrm{O}^{\uparrow}_+}

\def\HH{\mathcal{H}}

\def\N{\mathcal{N}}

\def\PP2{\mathcal{P}^2}
\def\G2{\mathrm{G}^+_2}
\def\G1{\mathrm{G}^+_1}
\def\NR1{\mathrm{N}^+_1}
\def\d{\mathrm{d}}
%%%%%%%%%%%%%%%%%%%%%%%%%%%%

\begin{document}
\title[]{Closed $1/2$-Elasticae in the Hyperbolic Plane}

\author{Emilio Musso}
\address{(E. Musso) Dipartimento di Matematica, Politecnico di Torino,
Corso Duca degli Abruzzi 24, I-10129 Torino, Italy}
\email{emilio.musso@polito.it}

\author{\'Alvaro P\'ampano}
\address{(A. P\'ampano) Department of Mathematics and Statistics, Texas Techu University, Lubbock, TX, 79409, USA}
\email{alvaro.pampano@ttu.edu}

\thanks{Authors partially supported by PRIN 2014-2017 ``Variet\`a reali e complesse: geometria, to\-po\-lo\-gia e analisi ar\-mo\-ni\-ca'' and by the GNSAGA of INDAM. The second author has been partially supported by the AMS-Simons Travel Grants Program 2021-2022}

\subjclass[2010]{53C50; 53C42; 53A04; 53A10}

\date{\today}

\maketitle

\begin{abstract}
We study critical trajectories in the hyperbolic plane for the $1/2$-Bernoulli's bending energy with length constraint. Critical trajectories with periodic curvature are classified into three different types according to the causal character of their momentum. We prove that closed trajectories arise only when the momentum is a time-like vector. Indeed, for suitable values of the Lagrange multiplier encoding the conservation of the length during the variation, we show the existence of countably many closed trajectories with time-like momentum, which depend on a pair of relatively prime natural numbers.\\

\noindent{\emph{Keywords:} Bernoulli's Functionals, Closed Trajectories, $p$-Elastic Curves.}
\end{abstract}

\section{Introduction} 

The present paper is aimed to study the existence and the global properties of closed critical points of the functionals
\begin{equation}\label{1/2}
	\mathcal{B}_{\lambda}:\gamma\longmapsto\int_\gamma\left(\sqrt{|\kappa|}+\lambda\right),
\end{equation}
defined on convex curves $\gamma$ of the hyperbolic plane $\HH^2$. The constant $\lambda$ is a Lagrange multiplier encoding the conservation of the length during the variation and $\kappa$ is the geodesic curvature of the curve. This is the natural continuation of a previous work \cite{MP}, devoted to the analogous problem for curves in the plane $\R^2$ and in the sphere $\mathbb{S}^2$. In turn,  \cite{MP} extended the results obtained in  \cite{AGM,AGP,AGP1,Blaschke, LP} from the unconstrained ($\lambda=0$) to the constrained case ($\lambda\in\mathbb{R}$).   

The functional (\ref{1/2}) belongs to the family of Bernoulli's functionals 
$$\mathcal{B}_{p,\lambda}:\gamma \longmapsto\int_\gamma\left(|\kappa|^p+\lambda\right),$$
whose critical curves are known as \emph{p-elasticae}.  These functionals appeared for the first time in a letter that D. Bernoulli sent to L. Euler in 1738 (see \cite{T}, p. 172).  

The case $p=2$ has been extensively studied since then. In addition to the original motivations connected with the theory of elastic rods, $2$-elasticae have found several interesting applications in other areas, such as, in surface theory (Willmore surfaces \cite{LS2,Pin1,VDM}, constrained Willmore surfaces \cite{BPP,He,SCDPS}, and surfaces with spherical lines of curvature \cite{CPS}, among others) and in the Canham-Hefrich-Evans modeling \cite{ Can,Ev,Helf} of bilipid membranes \cite{CGR1,JMN,TY}. The existence and the geometric properties of closed $2$-elasticae have been treated in several works published since the 1980s (see for instance \cite{He,LS2,Y}).  

In a recent paper \cite{MY} Miura-Yoshizawa obtained a complete classification of $p$-elasticae in the plane $\mathbb{R}^2$, for every real number $p>1$. For $p\in(0,1)$, since the Lagrangian is merely continuous at the origin, the Bernoulli's functionals are defined for convex curves. Although these cases have been less studied, some particular cases are well known. The unconstrained (ie. $\lambda=0$) cases in $\R^2$ with $p=1/2$, and $p=1/3$, were considered by W. Blaschke (\cite{Blaschke}, Vol I, 1921, and Vol II, 1923, respectively) who showed that the critical curves are catenaries ($p=1/2$) or parabolas ($p=1/3$).  The case $p=1/3$ and $\lambda=0$ corresponds with the equi-affine length for convex curves. After the seminal paper \cite{COT}, equi-affine geometry of convex curves has been consistently used in recent studies on human curvilinear $2$-dimensional drawing movements and recognition for non-rigid planar shapes (see for instance \cite{FH, RK}, respectively, and the literature therein).

For natural values of $p>2$, unconstrained $p$-elasticae were previously studied in \cite{AGM}. They have been used to construct Willmore-Chen submanifolds in spaces with Riemannian and pseudo-Riemannian warped product metrics \cite{ABG,BFLM} and have been applied to analyze conformal tensions in string theories \cite{BFL}. In the case of smooth spherical curves, the only closed critical trajectories are geodesics. Surprisingly, smooth closed spherical unconstrained $p$-elasticae other than geodesics are $2$-elasticae or else $p\in(0,1)$ \cite{GPT}. In \cite{AGM,AGP1,GPT,MOP,MoP}, for $0<p<1$, the existence of infinitely many closed unconstrained $p$-elasticae in ${\mathbb S}^2$ was shown. When $p=(n-2)/(n+1)$, $n\in {\mathbb N}$, and $n>2$, unconstrained $p$-elasticae in Riemannian $2$-space forms arise in the theory of biconservative hypersurfaces as the generating curves of rotational ones \cite{MOP,MoP}. In particular, unconstrained $1/2$-elasticae have also been characterized as the generating curves of invariant minimal surfaces in Riemannian and Lorentzian $3$-space forms \cite{AGP}.

The case $p=1/2$ of the Bernoulli's functionals $\mathcal{B}_{p,\lambda}$ is also special for the following reason: after a suitable contact transformation, the invariant signatures \cite{COST,HO,KRV,MNJMIV} of the critical curves are the connected components of the smooth strata of singular elliptic curves. For this reason, $p=1/2$ can be considered to play the role of the classical case $p=2$ among the possible values of $p\in(0,1)$. As in the case $p=2$, their study can be made by resorting to elliptic functions and integrals. However, the  appearance of a singularity in the elliptic curves containing the signatures does not allow one to explicitly express the curvature in terms of elliptic functions as is the case when $p=2$ \cite{He,LS2}. 

In the plane $\mathbb{R}^2$ there are no closed $1/2$-elasticae \cite{LP,MP}, while all closed  $1/2$-elasticae in ${\mathbb S}^2$ were recently found in \cite{MP} (constrained case) and previously in \cite{AGM, AGP1} for the unconstrained case. Motivated by these results of Arroyo, Garay, Menc\'ia, Musso and P\'ampano about the existence of closed $1/2$-elasticae in ${\mathbb S}^2$, this paper aims to investigate constrained $1/2$-elasticae in the hyperbolic plane $\mathcal{H}^2$. Despite obvious formal analogies, the hyperbolic and spherical cases present substantial differences due to the different topologies of their isometry groups, $ \O(1,2)$ and  ${\rm SO}(3)$ respectively, and  to the different properties of the adjoint representations.  Another substantial difference is the following:
for every fixed $\lambda$,  the critical curves of $\mathcal{B}_{\lambda}$ are the flow lines of a vector field $\vec{X}_{\lambda}$ of the half-plane ${\mathbb H}=\{(x,y)\in \R^2\,/\, x>0\}$. In the spherical case $\vec{X}_{\lambda}$ possesses a unique stable equilibrium point (a center), while in the hyperbolic case $\vec{X}_{\lambda}$ may have, for suitable values of $\lambda$, a stable equilibrium point (a center) and an unstable equilibrium point (a saddle point). This causes difficulties in the numerical evaluations of those critical curves whose signatures are near the unstable critical point.

\section{$1/2$-Elasticae}

Let $\R^{1,2}$ be the Minkoswki 3-space  with the  inner product 
$$\vec{x}\cdot \vec{y}=(x^1,x^2,x^3)\cdot (y^1,y^2,y^3)=-x^1y^1+x^2y^2+x^3y^3\,,$$  
oriented by  the volume form $\d x^1\wedge \d x^2 \wedge \d x^3$,  time-oriented by the
causal cone $\N^+=\{\vec{x} \in \R^{1,2}\,/\, \vec{x}\cdot \vec{x}\le 0, \, x^1>0\}$, and equipped with the
vector cross-product $\times$ defined by $(\vec{x}\times \vec{y})\cdot \vec{w}={\rm det}(\vec{x},\vec{y},\vec{w})$ where ${\rm det}$ stands for the determinant.

The \emph{hyperbolic plane} is the space-like surface  
$$\HH ^2=\{\vec{x}\in \N^+\,/\,  \vec{x}\cdot \vec{x} = -1\}\,,$$ 
with the induced Riemannian metric 
$g_{\HH^2}= (d\vec{x}\cdot d \vec{x})|_{\HH^2}$ of constant sectional curvature $-1$.  We identify $\HH^2$ and the  unit disk ${\rm D}^2\subset \R^2$ with the  Poincar\'e metric by means of the isometry
\begin{equation*}\label{poincaremodel}
	\phi: \vec{x}=(x^1,x^2,x^3)\in \HH^2\longmapsto \frac{1}{1+x^1}(x^2,x^3)\in {\rm D}^2.
\end{equation*}

\begin{remark} \emph{Throughout this paper we will use the two models of the hyperbolic plane. In particular, for visualization purposes, the Poincar\'e disk will be used.}
\end{remark} 

Let $\gamma: {\rm I}\subseteq\R\to \HH^2$ be a smooth immersed curve where ${\rm I}\subseteq\mathbb{R}$ is its maximal interval of definition. The \emph{hyperbolic Frenet frame} along $\gamma$ is defined by 
$${\mathcal F}= (\gamma, \dot{\gamma},\gamma\times \dot{\gamma}):{\rm I}\subseteq\R \longmapsto \O(1,2)\,,$$
where $\O(1,2)$ is the restricted Lorentz group of $\mathbb{R}^{1,2}$ and the upper dot represents the derivative with respect to the hyperbolic line element $ds$ (ie. $d\gamma=\dot{\gamma}ds$). The hyperbolic Frenet frame satisfies the linear system
\begin{equation}\label{frenet}
	{\mathcal F}^{-1}\cdot d{\mathcal F} = \begin{pmatrix}
0& 1& 0\\
1& 0&-\kappa\\
0&\kappa&0
\end{pmatrix}ds\,,
\end{equation}
where $\kappa$ is the \emph{geodesic curvature} of $\gamma$. If $\kappa(t_o)=0$ for some value $t_o\in{\rm I}\subseteq\R$, we say that $\gamma(t_o)$ is a \emph{hyperbolic inflection point}.  Curves with no inflection points have a preferred orientation such that $\kappa>0$ everywhere. If $\dot{\kappa}(t_o)=0$ for some $t_o\in{\rm I}\subseteq\R$,  we say that $\gamma(t_o)$ is a \emph{hyperbolic vertex}. 

\begin{remark} \emph{We will distinguish between the curve $\gamma:{\rm I}\subseteq\R\to\HH^2$ and its image, denoted by $|[\gamma]|\subset\HH^2$, which will be referred to as the trajectory of $\gamma$.}
\end{remark}

Let $\gamma$ be a curve such that its curvature $\kappa$ is non-constant and periodic. The \emph{wavelength} of $\gamma$ is the least period $\omega>0$ of $\kappa$. The \emph{monodromy} of the curve is defined by ${\mathfrak m}_{\gamma} ={\mathcal F}|_{\omega}\cdot {\mathcal F}|_0^{-1}$, and it belongs to the stabilizer of the trajectory $|[\gamma]|$. The curve $\gamma$ is closed (ie. periodic) if and only if ${\mathfrak m}_{\gamma}$ has finite order ${\bf n}_{\gamma}\in {\mathbb N}$. We call ${\bf n}_{\gamma}$ the \emph{wave number}. 

\begin{remark} \emph{From the geometric point of view, the wave number is the order of the symmetry group of $\gamma$.} 
\end{remark}

If $\gamma$ is closed, then ${\rm I}=\mathbb{R}$ and $\omega_{\gamma}={\bf n}_{\gamma}\omega$ is the \emph{least period} of $\gamma$. A closed curve $\gamma$ induces an immersion $\boldsymbol\gamma : {\mathbb S}^1_{\omega_{\gamma}}=\R/\omega_{\gamma}\Z\to \HH^2$.  If  $\boldsymbol\gamma$ is injective,  we say that $\gamma$ is a \emph{simple closed curve}. 

For later use we recall here the hyperbolic analogue of the classical Four Vertex Theorem \cite{Kn}, proved by S. B. Jackson in \cite{Ja} (see \cite{Sg} for an alternative proof in the convex case):

\begin{thm}\label{fourvertices} A simple closed smooth curve in $\HH^2$ possesses at least four hyperbolic vertices.
\end{thm}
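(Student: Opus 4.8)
The plan is to prove the hyperbolic Four Vertex Theorem by adapting the standard Euclidean argument, working intrinsically with the geodesic curvature $\kappa$ and its derivative $\dot\kappa$ taken with respect to the hyperbolic arc length $ds$. Since $\boldsymbol\gamma:\mathbb{S}^1_{\omega_\gamma}\to\HH^2$ is an immersion of a circle, the geodesic curvature $\kappa$ is a smooth periodic function on $\mathbb{S}^1_{\omega_\gamma}$, so it attains a global maximum and a global minimum. At each of these two points $\dot\kappa=0$, giving at least two hyperbolic vertices immediately. The content of the theorem is that two vertices never suffice, so the crux is to rule out the case of exactly two vertices.

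First I would argue by contradiction, assuming $\gamma$ has exactly two vertices, at points $p_{\max}$ and $p_{\min}$ realizing the maximum and minimum of $\kappa$. These two points divide the trajectory $|[\gamma]|$ into two open arcs, along each of which $\kappa$ is strictly monotone (otherwise an interior critical point of $\kappa$ would produce a third vertex). Consequently $\dot\kappa$ vanishes only at $p_{\max}$ and $p_{\min}$ and keeps a fixed sign on each of the two arcs. The goal is to contradict this using a global integral identity, the hyperbolic analogue of the Euclidean relation $\oint \kappa'\,\langle p,\nu\rangle\,ds = 0$ that one obtains from the structure equations.

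The key technical step is to set up the right auxiliary linear functional and exploit the Frenet system \eqref{frenet}. Working in the hyperboloid model, I would pick a suitable fixed vector $\vec{a}\in\R^{1,2}$ and consider the scalar function $h=\vec{a}\cdot\gamma$ along the curve; differentiating twice with respect to $s$ and using \eqref{frenet} expresses $\ddot h$ and $\dddot h$ in terms of $h$, its derivatives, $\kappa$, and $\dot\kappa$. Integrating an appropriate combination over the full period $\omega_\gamma$ and using periodicity to kill the total-derivative terms yields an identity of the schematic form $\oint \dot\kappa\,\bigl(\vec{a}\cdot\dot\gamma\bigr)\,ds=0$ for every fixed $\vec{a}$. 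One then chooses $\vec{a}$ so that the companion factor $\vec{a}\cdot\dot\gamma$ changes sign exactly at $p_{\max}$ and $p_{\min}$ — concretely, take $\vec{a}$ to lie in the plane determined by the two normal directions at the extremal points, so that $\vec{a}\cdot\dot\gamma$ has the same two zeros as $\dot\kappa$ and matches its sign on each arc. Then the integrand $\dot\kappa\,(\vec{a}\cdot\dot\gamma)$ is of one sign and not identically zero, forcing the integral to be strictly positive (or negative), which contradicts its vanishing.

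The main obstacle I expect is constructing the fixed vector $\vec{a}$ that makes the sign-matching work globally rather than just locally: in the hyperbolic setting the normal field $\gamma\times\dot\gamma$ and the position vector $\gamma$ both vary, and the relevant linear form on $\dot\gamma$ must be shown to have no zeros other than the two vertices. This is where the convexity hypothesis (in the alternative proof of \cite{Sg}) is genuinely used — convexity guarantees that the tangent direction $\dot\gamma$ sweeps monotonically, so a linear functional $\vec{a}\cdot\dot\gamma$ has at most two sign changes, exactly what is needed. Without convexity one must instead argue, following Jackson \cite{Ja}, that the two monotone arcs of $\kappa$ would force the osculating circles to be nested in a way incompatible with the curve closing up, again yielding the contradiction. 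Either route reduces the problem to a clean sign argument once the structure equations \eqref{frenet} have been leveraged to produce the vanishing integral, so the bulk of the work is in this geometric sign-matching step rather than in any lengthy computation.
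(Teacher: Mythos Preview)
The paper does not prove this theorem: it is quoted from Jackson \cite{Ja} (with \cite{Sg} as an alternative in the convex case), so there is no ``paper's proof'' to compare your attempt against line by line. What the paper \emph{does} supply is the Remark immediately following the statement, and that remark points to a far shorter route than the one you outline. Namely, vertices are M\"obius-invariant (they are the points of fourth-order contact with the osculating circle, and circles together with contact order are preserved by M\"obius maps). In the Poincar\'e disk model the hyperbolic osculating constant-curvature curve at a point is an arc of the very same Euclidean circle that osculates the curve there, so the hyperbolic and Euclidean vertices of a curve in ${\rm D}^2$ coincide. The hyperbolic Four Vertex Theorem then follows immediately from the classical Euclidean one, with no new analysis required.

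Your direct Herglotz-type argument is a reasonable intrinsic alternative, but two points deserve correction. First, the integral identity you actually obtain from the Frenet system \eqref{frenet} is
\[
\oint \dot\kappa\,(\vec{a}\cdot\gamma)\,ds=0,
\]
not the version with $\vec{a}\cdot\dot\gamma$: from $\dot N=\kappa\dot\gamma$ one gets $\oint\kappa(\vec{a}\cdot\dot\gamma)\,ds=0$, and then one integration by parts gives the identity above. The companion factor is therefore the height function $\vec{a}\cdot\gamma$ relative to a geodesic (the intersection of $\vec{a}^{\perp}$ with $\HH^2$ for $\vec{a}$ space-like), not a tangential pairing. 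Second, and consistently with this, the reason convexity helps is that a geodesic meets a convex curve in at most two points, so $\vec{a}\cdot\gamma$ has exactly two zeros when the geodesic is taken through $p_{\max}$ and $p_{\min}$; your ``tangent sweeps monotonically'' justification pertains to the wrong factor. With these fixes your sketch goes through cleanly in the convex case; in the general simple-closed case you are, as you acknowledge, effectively deferring to \cite{Ja}, whereas the M\"obius-invariance observation dispatches both cases at once.
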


\begin{remark} \emph{The geometric fact underlying the hyperbolic Four Vertex Theorem is that the notion of a vertex of a curve in a $2$-space form is invariant with respect to the action of the pseudo-group of Moebius transformations. Therefore, if we take a curve in the unit disk, its Euclidean and hyperbolic vertices coincide.}
\end{remark}

We now introduce the variational problem and the class of critical curves under consideration. 

\begin{defn} A \emph{$1/2$-elastica} with multiplier $\lambda\in {\mathbb R}$ is a convex curve in $\HH^2$ of class ${\mathcal C}^4$ which is a critical point of the functional
$$\mathcal{B}_{\lambda} : \gamma\longmapsto \int_\gamma\left( \sqrt{|\kappa|}+\lambda\right),$$
with respect to compactly supported variations through convex curves.
\end{defn}

As suggested by this definition, from now on we will only work with convex curves, for which we define the following geometric invariant.

\begin{defn} The \emph{Blaschke invariant} of a convex curve $\gamma$ is
	$$\mu=\sqrt{\kappa}\,,$$
ie. the positive square root of the geodesic curvature of $\gamma$.
\end{defn}

In the following proposition we characterize $1/2$-elasticae with multiplier $\lambda\in\mathbb{R}$ in terms of a vector of $\mathbb{R}^{1,2}$, namely the momentum.

\begin{prop} Let $\gamma$ be a convex curve parameterized by the arc length.  Then,  $\gamma$ is a 1/2-elastica with multiplier $\lambda$
if and only if there exists  a  non-zero vector $\vec{\xi}\in \R^{1,2}$, called the momentum, such that
\begin{equation}\label{Noether}
	\frac{1}{2\mu}\gamma+\frac{\dot{\mu}}{2\mu^2}\dot{\gamma}-\left(\lambda+\frac{\mu}{2}\right)\gamma\times \dot{\gamma}=\vec{\xi}\,,
\end{equation}
where $\mu=\sqrt{\kappa}$ is the Blaschke invariant of $\gamma$.
\end{prop}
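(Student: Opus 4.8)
My plan is to read the proposition as a Noether-type first integral coming from the invariance of $\mathcal B_\lambda$ under the isometry group $\O(1,2)$ of $\HH^2$. The Killing fields of $\HH^2$ are exactly the maps $\vec x\mapsto \vec a\times\vec x$ with $\vec a\in\R^{1,2}$, and Noether's theorem attaches to this three-parameter symmetry a vector that is conserved along critical curves; this conserved vector is precisely the momentum $\vec\xi$. Since the statement already hands us the explicit expression (\ref{Noether}), the most economical route is to prove the equivalence ``$\gamma$ is a $1/2$-elastica $\iff$ $\vec\xi$ is constant'' directly. Throughout I would set $\mu=\sqrt\kappa>0$, which is legitimate because $\gamma$ is convex, and abbreviate the hyperbolic Frenet frame by $T=\dot\gamma$ and $N=\gamma\times\dot\gamma$, so that (\ref{frenet}) reads $\dot\gamma=T$, $\dot T=\gamma+\kappa N$, $\dot N=-\kappa T$.

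First I would record the Euler--Lagrange equation. Carrying out the first variation of $\int_\gamma\big(f(\kappa)+\lambda\big)\,ds$ for a compactly supported normal variation through convex curves, the critical point condition in a surface of constant curvature $G=-1$ takes the classical form
\[
\frac{d^2}{ds^2}f'(\kappa)+\big(\kappa^2+G\big)f'(\kappa)-\kappa\big(f(\kappa)+\lambda\big)=0 .
\]
Specializing to $f(\kappa)=\sqrt\kappa=\mu$, so that $f'(\kappa)=1/(2\mu)$, and using $\kappa=\mu^2$ and $G=-1$, this becomes, after multiplying by $-1$,
\[
\frac{\ddot\mu}{2\mu^2}-\frac{\dot\mu^2}{\mu^3}+\frac{\mu^3}{2}+\frac{1}{2\mu}+\lambda\mu^2=0 ,
\]
which is the equation characterizing $1/2$-elasticae with multiplier $\lambda$.

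Next I would verify the conservation law by differentiating the left-hand side of (\ref{Noether}). Writing $\vec\xi=\tfrac{1}{2\mu}\gamma+\tfrac{\dot\mu}{2\mu^2}T-\big(\lambda+\tfrac\mu2\big)N$ and applying the Frenet relations, a short computation shows that the components of $\dot{\vec\xi}$ along $\gamma$ and along $N$ cancel identically, while the component along $T$ is exactly the left-hand side of the displayed Euler--Lagrange equation. Hence $\dot{\vec\xi}$ equals that expression times $T$, so $\dot{\vec\xi}=0$ if and only if the Euler--Lagrange equation holds; this proves both implications at once, namely that a convex curve is a $1/2$-elastica precisely when the vector (\ref{Noether}) is constant. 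Finally, $\vec\xi\neq0$ is automatic, since its $\gamma$-component $1/(2\mu)$ is nonzero and $\{\gamma,T,N\}$ is a pointwise basis of $\R^{1,2}$; one may also read off the causal character of the momentum from $\vec\xi\cdot\vec\xi=-\tfrac{1}{4\mu^2}+\tfrac{\dot\mu^2}{4\mu^4}+\big(\lambda+\tfrac\mu2\big)^2$.

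The only genuinely delicate step is the first variation (Step 1): one must vary the frame and the geodesic curvature of a convex curve, dispose of the tangential part of the variation field and of the boundary flux, and correctly track the ambient curvature term $G=-1$. The square-root Lagrangian is what forces the restriction to convex curves, since $f'(\kappa)=1/(2\mu)$ is smooth only away from $\kappa=0$; this is exactly what makes every division by $\mu$ above legitimate. Once the Euler--Lagrange equation is in hand, the remaining verification is a routine differentiation in the moving frame.
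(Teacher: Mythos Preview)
Your proof is correct and follows essentially the same approach as the paper: cite the standard Euler--Lagrange equation for $\int_\gamma(f(\kappa)+\lambda)\,ds$ in a space form, specialize to $f=\sqrt\kappa$, and then differentiate the momentum vector using the Frenet equations to see that $\dot{\vec\xi}$ equals the Euler--Lagrange expression times $\dot\gamma$. Your explicit verification that $\vec\xi\neq0$ and your side computation of $\vec\xi\cdot\vec\xi$ go slightly beyond the paper's proof (the latter appears in the paper only afterwards, as the conservation law~(\ref{ode})).
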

\begin{proof} 
Using a standard formula for the variational derivative of functionals depending on $\kappa$  \cite{AGM,AGP,LS2,P},  a convex curve is  critical for ${\mathcal B}_{\lambda}$ if and only if $\mu=\sqrt{\kappa}$ is a solution of the Euler-Lagrange equation
\begin{equation}\label{eulerlagrange}
\frac{\ddot{\mu}}{2\mu^2}=\frac{\dot{\mu}^2}{\mu^3}-\frac{1}{2\mu}-\lambda\mu^2-\frac{\mu^3}{2}\,.
\end{equation}
On the other hand, (\ref{frenet}) implies
$$\frac{d}{ds}\left(\frac{1}{2\mu}\gamma+\frac{\dot{\mu}}{2\mu^2}\dot{\gamma}-\left[\lambda+\frac{\mu}{2}\right]\gamma\times \dot{\gamma} \right)
=\left(\frac{\ddot{\mu}}{2\mu^2}-\frac{\dot{\mu}^2}{\mu^3}+\frac{1}{2\mu}+\lambda\mu^2+\frac{\mu^3}{2} \right)\dot{\gamma}\,,$$
which proves the result
\end{proof}

\begin{remark} \emph{The explicit expression of the momentum is found via a more conceptual approach, which is based on the construction of the Griffith's phase space and the analysis of the moment map for the Hamiltonian action of $\O(1,2)$ on the phase space \cite{GM,G,J,MP,OR}.}
\end{remark}

From (\ref{Noether}) we obtain the \emph{conservation law}
\begin{equation}\label{ode}
	\dot{\mu}^2=-\mu^2\left(\mu^4+4\lambda \mu^3+4\left[\lambda^2-  c\right]\mu^2-1\right),
\end{equation}
where $c=\vec{\xi}\cdot \vec{\xi}$. Note that the constant $c$ may have any real value.

In order to understand the solutions of \eqref{eulerlagrange} and \eqref{ode} we will analyze the orbit types of its phase portrait. 

\subsection{The Modified Invariant Signatures}\label{msignatures} 

We begin by defining the invariant and modified invariant signatures of an hyperbolic curve. Let $\gamma : {\rm I}\subseteq \mathbb{R}\to \HH^2$ be a smooth immersed curve with curvature $\kappa$.

\begin{defn} The (\emph{hyperbolic}) \emph{invariant signature}   of $\gamma$ is the set ${\mathfrak S}_{\gamma}\subset \R^2$  parameterized by $(\kappa,\dot{\kappa})$.  The \emph{modified invariant signature} of a convex curve $\gamma$  is the set  $\widehat{{\mathfrak S}}_{\gamma}\subset \R^2$ parameterized by  $(\mu,\dot{\mu})$, where $\mu =\sqrt{\kappa}$ is the Blaschke invariant.
\end{defn}

\begin{remark} \emph{Our definition of the invariant signature is the hyperbolic  analogue of the corresponding one for plane curves in Euclidean,  affine or projective geometries \cite{COST,  HO, KRV,MNJMIV}. We note that the signature of a convex curve is the image of the modified signature by the contact diffeomorphism ${\mathtt f}:(x,y)\in {\mathbb H}\to (x^2,2xy)\in  {\mathbb H}$ of the positive half-plane $ {\mathbb H}=\{(x,y)\in \R^2\,/\, x>0\}$.  If $\gamma$ is a closed convex curve with non-constant curvature and wave number  ${\bf n}_{\gamma}$,  the point $(\mu(s),\dot{\mu}(s))$, $s\in [0,\omega_{\gamma}]$ runs through the signature ${\bf n}_{\gamma}$-times.  Hence, the wave number is the degree of the branched covering $\gamma(s)\in |[\gamma]|\to (\kappa(s),\dot{\kappa}(s))\in {\mathfrak S}_{\gamma}$ or, in the convex case, the degree of $\gamma(s)\in |[\gamma]|\to (\mu(s),\dot{\mu}(s))\in \widehat{{\mathfrak S}}_{\gamma}$.}
\end{remark}

Let ${\mathbb H}$ be the half-plane $\{(x,y)\in \R^2\,/\, x>0\}$ and  $\vec{X}_{\lambda}\in {\mathfrak X}({\mathbb H})$ be the vector field (see Figure \ref{FIG08}, left)
\begin{equation}\label{phasevector}
	\vec{X}_{\lambda}|_{(x,y)}=y\,\partial_x+2\left(\frac{y^2}{x}-\frac{x}{2}-\lambda x^4-\frac{x^5}{2}\right)\partial_y\,.
\end{equation}
The integral curves of $\vec{X}_{\lambda}$ are the modified signatures of $1/2$-elasticae with multiplier $\lambda\in\mathbb{R}$. Hence, they are parameterized by
$${\bf m}:s\in {\rm I}\subseteq\mathbb{R}\longmapsto (\mu(s),\dot{\mu}(s)),$$
where $\mu$ is a positive solution of (\ref{eulerlagrange}) and ${\rm I}$ is its maximal interval of definition.   From (\ref{ode}) it follows that the modified signature of a $1/2$-elastica with momentum $\vec{\xi}$ is a smooth stratum of ${\mathcal C}^*_{\lambda,c}={\mathcal C}_{\lambda,c}\cap {\mathbb H}$,  where ${\mathcal C}_{\lambda,c}\subset \R^2$ is the singular elliptic curve $y^2+x^2Q_{\lambda,c}(x)=0$ and $Q_{\lambda,c}(x)$ is the quartic polynomial
\begin{equation}\label{quadratic} 
	Q_{\lambda,c}(x)=x^4+4\lambda x^3+4(\lambda^2- c)x^2-1\,.
\end{equation}
Non-constant periodic solutions of (\ref{eulerlagrange}) correspond to the smooth compact connected components of ${\mathcal C}_{\lambda,c}$ contained in ${\mathbb H}$. If $\lambda\ge -2/\sqrt[4]{27}$ there are no smooth compact $1$-dimensional phase curves of  $\vec{X}_{\lambda}$.  Thus, there are no critical curves with non-constant periodic curvature and, a fortiori, closed $1/2$-elasticae. Since our ultimate goal is to investigate closed $1/2$-elasticae, we discard this case. Thus, from now on we assume that $\lambda< -2/\sqrt[4]{27}$ holds. Then, the equilibrium points of 
$\vec{X}_{\lambda}$ are ${\bf m}_{\pm}(\lambda)=(\eta_{\pm}(\lambda),0)$,  where $0< \eta_{-}(\lambda)<\eta_{+}(\lambda)$ are the two real roots of the quartic polynomial
\begin{equation}\label{quarticP} 
	P_{\lambda}(x)=x^4+2\lambda x^3+ 1\,.
\end{equation}
The equilibrium point ${\bf m}_-(\lambda)$ is unstable (a saddle point) while ${\bf m}_+(\lambda)$ is a stable elliptic equilibrium point (a center). 

\begin{remark} \emph{The functions
\begin{equation}\label{eta}
	\eta_{\pm}:(-\infty,  -2/\sqrt[4]{27}\,)\longmapsto \eta_{\pm}(\lambda)\in \R^+\end{equation} 
are continuous and real-analytic on $(-\infty,  -2/\sqrt[4]{27})$.  The function $\eta_-$ is strictly increasing,  $\eta_-(\lambda)>1$  for every $ \lambda\in (-1, -2/\sqrt[4]{27})$, and $\eta_-(\lambda)\le 1$  for every $ \lambda\le -1$.  The function  $\eta_+$ is strictly decreasing. Using Ferrari's formula one can obtain the explicit expressions of the functions $\eta_\pm$. These expressions are rather complicated and, hence, we avoid writing them here, although they will be used in some computations later on.}
\end{remark}

\begin{figure}[h]
	\begin{center}
		\includegraphics[height=4cm,width=4cm]{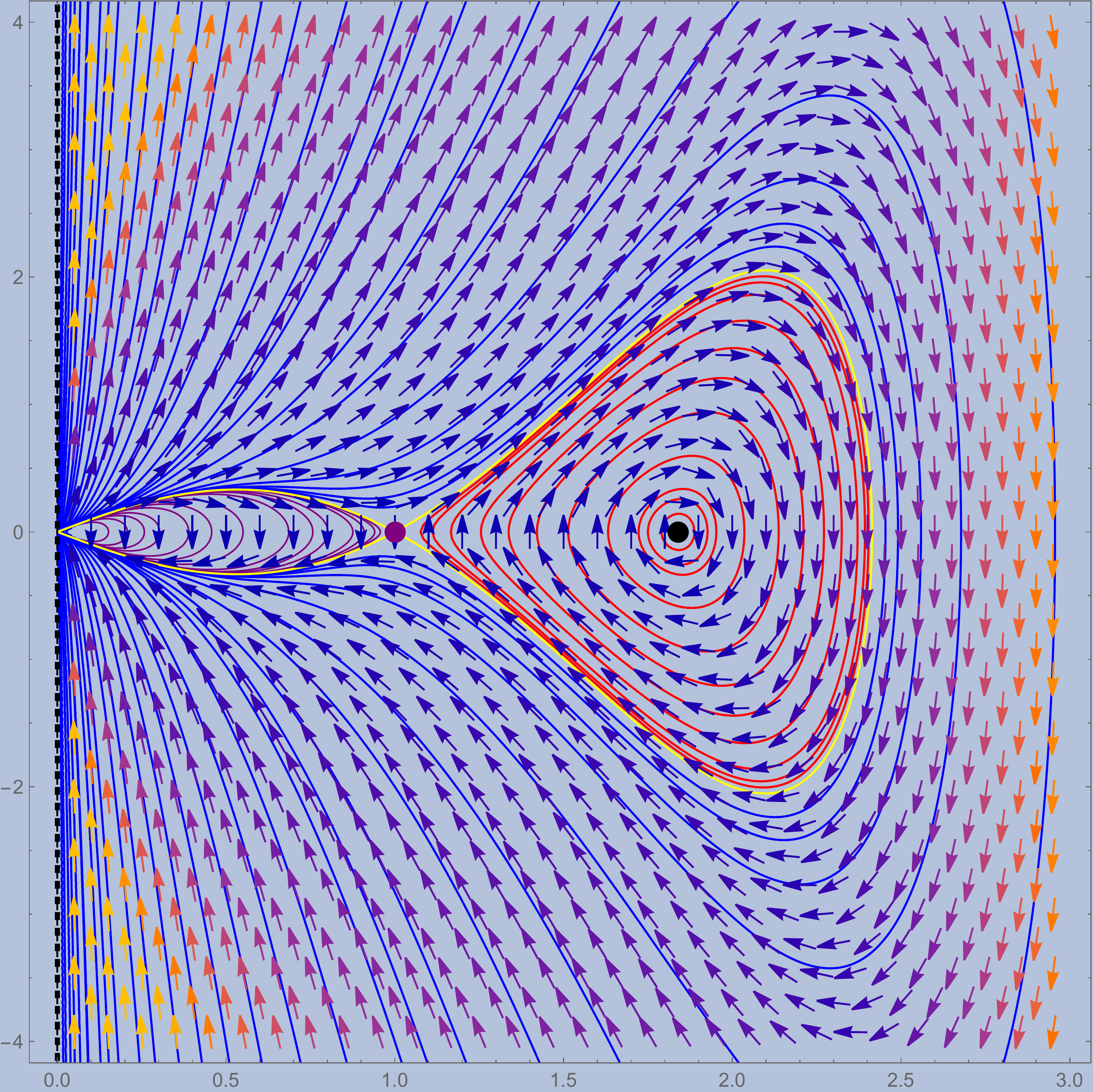}\quad\quad\quad\quad
		\includegraphics[height=4cm,width=4cm]{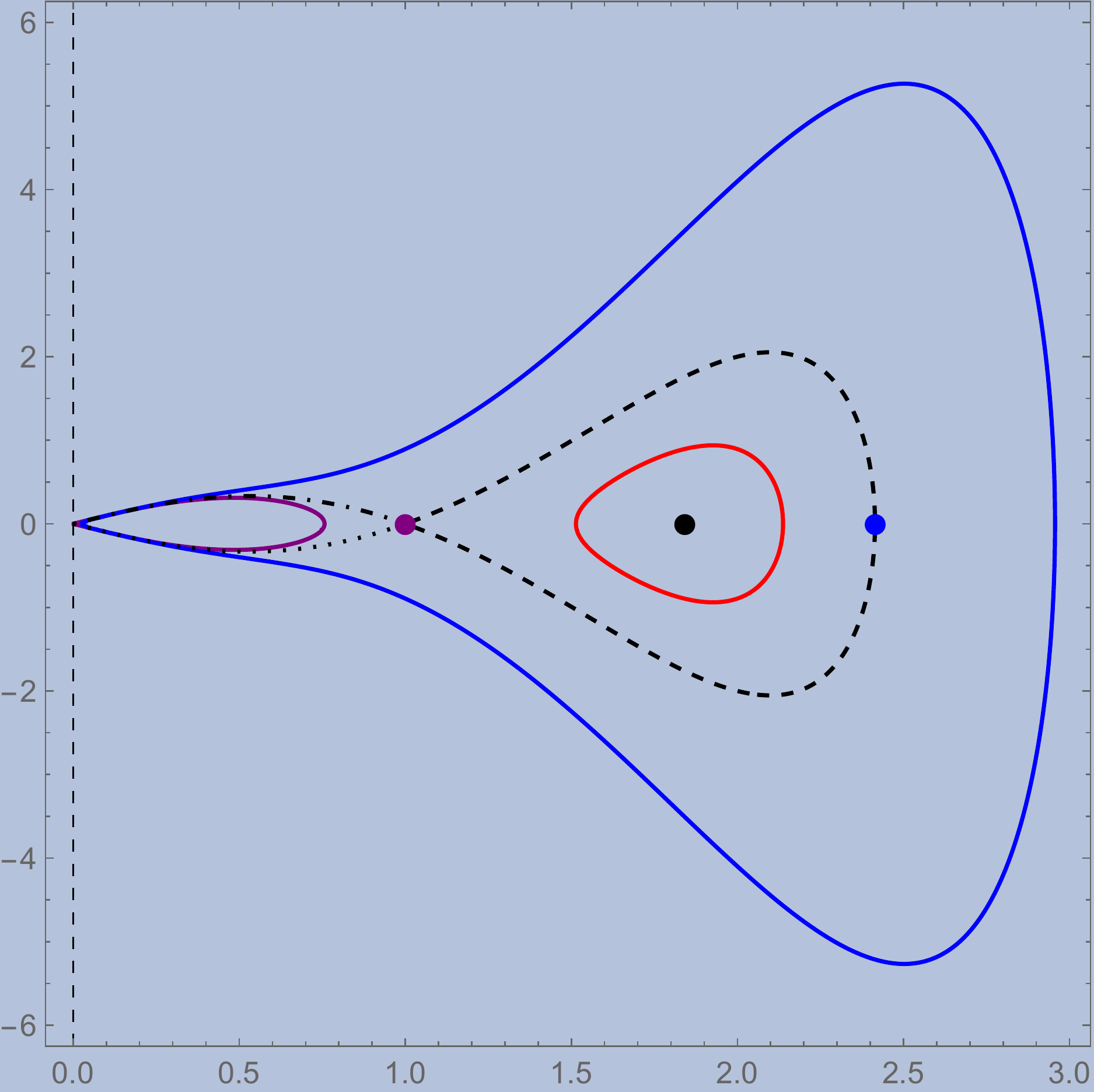}
		\caption{\small{Left: The plot of the vector field $\vec{X}_{-1}$ and its phase portrait. Right: The orbit types of $\vec{X}_{-1}$.
		}} \label{FIG08}
	\end{center}
\end{figure}
 
We now describe the orbit types of the phase portrait of the vector field $\vec{X}_{\lambda}$ (see Figure \ref{FIG08}, right): 
\begin{itemize}
\item The unstable equilibrium point ${\bf m}_-(\lambda)$ and the stable equilibrium point  ${\bf m}_+(\lambda)$. They correspond to the constant solutions 
$\mu =\eta_-(\lambda)$ and $\mu =\eta_+(\lambda)$ of (\ref{eulerlagrange}) (the purple and black points, respectively, in Figure \ref{FIG08}).
\item Closed trajectories (see the red curve depicted on Figure \ref{FIG08}). They correspond to the solutions of  
(\ref{eulerlagrange}) with initial conditions $\mu(0)\in (\eta_-(\lambda),\eta_+(\lambda))$ and $\dot{\mu}(0)=0$.
\item Non-closed phase curves of the first kind (see the blue curve depicted on Figure \ref{FIG08}). They correspond to the solutions of  
(\ref{eulerlagrange}) with initial conditions $\mu(0)\in (m^*_{\lambda},+\infty)$  and $\dot{\mu}(0)=0$  (the point $ (m^*_{\lambda},0)$ is represented in blue in Figure \ref{FIG08}). The origin is the limit point of these integral curves.
\item Non-closed phase curves of the second kind (see the purple curve depicted on Figure \ref{FIG08}). They correspond to the solutions of  
(\ref{eulerlagrange}) with initial conditions $\mu(0)\in (0,\eta_-(\lambda))$ and $\dot{\mu}(0)=0$. The origin is the limit point of these integral curves.
\item The exceptional (non-closed) phase curve of the first kind (the dashed-black curve reproduced in Figure \ref{FIG08}). It corresponds to the solution of (\ref{eulerlagrange}) with initial conditions $\mu(0)=m^*_{\lambda}$  and $\dot{\mu}(0)=0$. The unstable equilibrium is the limit point of this phase curve.
\item The two exceptional (non-closed) phase curves of the second kind (the dotted and dash-dotted black curves reproduced in Figure \ref{FIG08}).  They correspond to the solutions of (\ref{eulerlagrange}) with initial conditions 
$$\mu(0)= \frac{\eta_-(\lambda)}{2},\quad \dot{\mu}(0)=\pm  \frac{\eta_-(\lambda)}{2}\sqrt{-Q_{\lambda,c_+(\lambda)}\left( \frac{\eta_-(\lambda)}{2}\right)}\,.$$
They have two limit points: the origin and the unstable equilibrium point.
\end{itemize}

In the hyperbolic plane $\mathcal{H}^2$ there are three types of curves of positive constant curvature $\kappa$,  depending on whether $\kappa>1$ (elliptic type), $\kappa=1$ (parabolic type) or $\kappa<1$ (hyperbolic type). The only closed ones, namely circles, are those of elliptic type (ie. $\kappa>1$). We then have the following result.

\begin{prop}\label{constcurv}
	For closed $1/2$-elasticae with multiplier $\lambda$ and $\kappa>0$ constant, the following conclusions hold:
	\begin{itemize}
		\item If $\lambda>-2/\sqrt[4]{27}$ there are no $1/2$-elasticae with positive constant curvature and multiplier $\lambda$.
		\item If  $\lambda=-2/\sqrt[4]{27}$ there is one equivalence class of  closed $1/2$-elasticae with positive constant curvature and multiplier $\lambda$.  In this case $\kappa=\sqrt{3}$.
		\item  If $-1<\lambda<-2/\sqrt[4]{27}$ there are two distinct equivalence classes of closed $1/2$-elasticae with positive constant curvature  and multiplier $\lambda$. Their curvatures are $\eta_-(\lambda)^2$ and $\eta_+(\lambda)^2$, respectively.
		\item  If $\lambda \le -1$ there is one equivalence class of  closed $1/2$-elasticae with positive constant curvature $\eta_+(\lambda)^2$ and multiplier $\lambda$.
	\end{itemize}
\end{prop}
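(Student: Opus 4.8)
The plan is to reduce the problem to the positive-root structure of the quartic $P_{\lambda}$ of \eqref{quarticP}, and then to separate out which roots actually yield closed trajectories. First I would impose constancy of the curvature: if $\kappa>0$ is constant then $\mu=\sqrt{\kappa}$ is constant, so $\dot{\mu}=\ddot{\mu}=0$ and the Euler--Lagrange equation \eqref{eulerlagrange} collapses to $\tfrac{1}{2\mu}+\lambda\mu^2+\tfrac{\mu^3}{2}=0$. Multiplying by $2\mu>0$ this is exactly $P_{\lambda}(\mu)=\mu^4+2\lambda\mu^3+1=0$. Hence a convex curve of constant curvature $\kappa=\mu^2$ is a $1/2$-elastica with multiplier $\lambda$ if and only if $\mu$ is a positive root of $P_{\lambda}$; this is consistent with the identification, recorded before \eqref{quarticP}, of the equilibria ${\bf m}_{\pm}(\lambda)=(\eta_{\pm}(\lambda),0)$ of $\vec{X}_{\lambda}$ with the roots of $P_{\lambda}$, the conservation law \eqref{ode} then merely fixing $c=\vec{\xi}\cdot\vec{\xi}$ and imposing no further constraint.

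Next I would count the positive roots. Since $P_{\lambda}'(x)=2x^2(2x+3\lambda)$, for $\lambda\ge 0$ the polynomial is strictly increasing on $(0,\infty)$ with $P_{\lambda}(0)=1$, so there is no positive root. For $\lambda<0$ the only positive critical point is $x_0=-3\lambda/2$, where $P_{\lambda}$ attains its minimum on $(0,\infty)$ with value $P_{\lambda}(x_0)=1-27\lambda^4/16$. Therefore $P_{\lambda}$ has no positive root when $P_{\lambda}(x_0)>0$, i.e. $\lambda>-2/\sqrt[4]{27}$; a single double positive root $\mu=x_0$ when $P_{\lambda}(x_0)=0$, i.e. $\lambda=-2/\sqrt[4]{27}$, in which case $x_0=3/\sqrt[4]{27}=\sqrt[4]{3}$ and $\kappa=\mu^2=\sqrt{3}$; and exactly two positive roots $\eta_{-}(\lambda)<x_0<\eta_{+}(\lambda)$ when $\lambda<-2/\sqrt[4]{27}$.

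Finally I would decide closedness. As recalled in the excerpt, a curve of constant curvature in $\HH^2$ is closed precisely when it is a circle, i.e. of elliptic type $\kappa=\mu^2>1$, equivalently $\mu>1$; moreover any two such circles of equal curvature are congruent under $\O(1,2)$, so each admissible root contributes exactly one equivalence class. Evaluating $P_{\lambda}(1)=2(1+\lambda)$ locates the threshold $\mu=1$ relative to the roots. For $-1<\lambda<-2/\sqrt[4]{27}$ one has $P_{\lambda}(1)>0$ while $x_0=-3\lambda/2>1$, so on the decreasing branch $(0,x_0)$ the first root satisfies $\eta_{-}(\lambda)>1$; together with $\eta_{+}(\lambda)>x_0>1$ this gives two closed solutions with curvatures $\eta_{-}(\lambda)^2$ and $\eta_{+}(\lambda)^2$. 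For $\lambda=-1$ one has $P_{\lambda}(1)=0$, so $\eta_{-}(\lambda)=1$ produces a curve of parabolic type ($\kappa=1$, not closed) while $\eta_{+}(\lambda)>1$ survives; for $\lambda<-1$ one has $P_{\lambda}(1)<0$, placing $1$ strictly between the roots, so $\eta_{-}(\lambda)<1$ produces a curve of hyperbolic type ($\kappa<1$, again not closed) and only $\eta_{+}(\lambda)$ survives. In the double-root case $\lambda=-2/\sqrt[4]{27}$ the unique solution has $\kappa=\sqrt{3}>1$ and is closed. Assembling these four regimes reproduces the stated count.

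I expect the only delicate point to be the bookkeeping in the last step: one must verify that the threshold $\mu=1$ lies on the decreasing branch of $P_{\lambda}$, which requires $x_0>1$, valid throughout $\lambda<-2/\sqrt[4]{27}$ since there $\lambda<-2/3$, so that the sign of $P_{\lambda}(1)$ unambiguously decides whether $\eta_{-}(\lambda)$ exceeds $1$. The monotonicity properties of $\eta_{\pm}$ recorded in \eqref{eta}, in particular $\eta_{-}(\lambda)>1$ on $(-1,-2/\sqrt[4]{27})$ and $\eta_{-}(\lambda)\le 1$ for $\lambda\le -1$, provide a convenient consistency check on this analysis.
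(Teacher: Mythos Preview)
Your proof is correct and follows the same approach as the paper: both reduce constant-curvature critical points to the positive roots of $P_{\lambda}$ and then invoke the elliptic/parabolic/hyperbolic trichotomy $\kappa\gtreqless 1$ to decide closedness. Your version is somewhat more self-contained, since you derive the root count directly from the minimum value $P_{\lambda}(-3\lambda/2)=1-27\lambda^4/16$ and locate $\eta_{-}(\lambda)$ relative to $1$ via the sign of $P_{\lambda}(1)=2(1+\lambda)$, whereas the paper simply cites the properties of $\eta_{\pm}$ stated in the remark following \eqref{quarticP}.
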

\begin{proof} If $\lambda>-2/\sqrt[4]{27}$ the vector field $\vec{X}_{\lambda}$ has no equilibrium points. This implies that (\ref{eulerlagrange}) does not possess constant solutions.  This proves the first assertion. If $\lambda=-2/\sqrt[4]{27}$,   the vector field $\vec{X}_{\lambda}$ possesses a unique equilibrium point, namely ${\bf m}_+(\lambda)={\bf m}_-(\lambda)=(\sqrt[4]{3},0)$. Since $\sqrt[4]{3}>1$, this implies the second assertion.   Suppose now that $\lambda<-2/\sqrt[4]{27}$.   The two critical curves of ${\mathcal B}_{\lambda}$ with constant curvature correspond to the two equilibrium points of the vector field $\vec{X}_{\lambda}$.   Thus,  their curvatures are $\eta_+(\lambda)^2$ and $\eta_-(\lambda)^2$, respectively.   Since $\eta_+(\lambda)>1$ for every $\lambda$, any curve with modified signature ${\bf m}_+(\lambda)$ is a closed circle.  Instead,  $\eta_-(\lambda)>1$ if and only if $\lambda\in  (-1,-2/\sqrt[4]{27})$.  This proves the third and the last assertions.
\end{proof}

\begin{remark} \emph{If $-1<\lambda<-2/\sqrt[4]{27}$,  the unstable equilibrium point of $\vec{X}_{\lambda}$ is the modified signature of a curve with constant curvature of elliptic type (a closed circle).  If $\lambda=-1$,  the unstable equilibrium point  is the modified signature of a curve with constant curvature of parabolic type (in the Poincar\'e disk, a circle minus an ``ideal'' point).  If $\lambda<-1$  the unstable equilibrium point  is the modified signature of a curve with constant curvature of hyperbolic type (a circular arc with two ``ideal'' points).}
\end{remark}

The study of closed $1/2$-elasticae with non-constant curvature can be subdivided into two parts: the analysis of the curves with non-constant periodic curvature and the investigation of the closure conditions for these curves.

\section{B-Curves}\label{B-curves} 

In this section we study $1/2$-elasticae with non-constant periodic curvature. For convenience, we introduce the following terminology.

\begin{defn} Convex curves whose Blaschke invariant $\mu$ is a non-constant periodic solution of (\ref{ode}) are called \emph{$B$-curves}.  A closed B-curve is a \emph{B-string}. 
\end{defn}

\begin{remark} \emph{Note that periodic solutions of (\ref{ode}) do exist if and only if the quartic polynomial $Q_{\lambda,c}$
has four distinct real roots, three positive and one negative,  denoted by
$e_1>e_2>e_3>0>e_4$, respectively. The equations  (\ref{eulerlagrange}), (\ref{ode}),  the multiplier $\lambda$, the constant $c$ and the roots $e_1>e_2>e_3>0>e_4$ of $Q_{\lambda,c}$ depend only on the equivalence class of the B-curve (see the definition below).}
\end{remark}

\begin{defn} Two immersed curves $\gamma$ and $\widetilde{\gamma}$ are \emph{equivalent to each other} if there exist ${\rm A}\in \O(1,2)$ and a smooth strictly increasing function $\psi:{\rm I}\subseteq\R\to \R$  such that $\widetilde{\gamma}(t)={\rm A}\gamma\circ \psi$, for all $t\in {\rm I}\subseteq\R$.  The \emph{equivalence class} of $\gamma$ is denoted by $\langle \gamma\rangle$. The \emph{moduli space} is the set of the equivalence classes.
\end{defn}

\begin{remark} \emph{If $\gamma$ and $\widetilde{\gamma}$ are equivalent to each other and parameterized by arc length the change of parameter $\psi$ is a translation of the independent variable (ie. $\psi(s)=s+s_o$,  where $s_o$ is a constant).}
\end{remark}

It turns out that the moduli space of B-curves can be described in terms of the multiplier $\lambda$ and the root $e_2$ of $Q_{\lambda,c}$.

\begin{prop}\label{ModSp} Let $\gamma$ be a B-curve. The map $\langle \gamma \rangle \to (\lambda(\langle \gamma \rangle),e_2(\langle \gamma \rangle))\in \R^2$ is a bijection 
onto the open domain ${\mathfrak P}=\{(\lambda,e_2)\in \R^2\,/\,   e_2>0,\, e_2^4+2\lambda e_2^3+1<0 \}$ of $\R^2$. 
\end{prop}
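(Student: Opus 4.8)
The plan is to reduce everything to a single algebraic identity relating the quartic $Q_{\lambda,c}$ to the quartic $P_\lambda$ whose positive roots are the equilibria $\eta_\pm(\lambda)$. A direct computation gives
\begin{equation*}
	x\,Q_{\lambda,c}'(x)-2\,Q_{\lambda,c}(x)=2\,P_\lambda(x),
\end{equation*}
where the $c$-dependent contributions cancel because they sit entirely in the degree-two coefficient $4(\lambda^2-c)x^2$. Evaluating at any root $e>0$ of $Q_{\lambda,c}$ yields $e\,Q_{\lambda,c}'(e)=2P_\lambda(e)$, so the sign of $Q_{\lambda,c}'$ at a positive root equals the sign of $P_\lambda$ there. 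This is the device that converts the root ordering $e_1>e_2>e_3>0>e_4$ into the inequality $e_2^4+2\lambda e_2^3+1<0$ defining $\mathfrak{P}$, and conversely.

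Well-definedness is immediate: by the remark preceding the statement, the class $\langle\gamma\rangle$ determines $\lambda$ and $c$, hence the roots of $Q_{\lambda,c}$, and in particular $e_2>0$. To see that the image lies in $\mathfrak{P}$, recall that for a $B$-curve the oval component of $\mathcal{C}_{\lambda,c}^{*}$ lies over $[e_2,e_1]$, the four roots being simple with $e_1>e_2>e_3>0>e_4$. Since $Q_{\lambda,c}$ has positive leading coefficient, it is positive on $(e_3,e_2)$ and negative on $(e_2,e_1)$, so $Q_{\lambda,c}'(e_2)<0$; the identity then gives $e_2^4+2\lambda e_2^3+1=P_\lambda(e_2)=\tfrac12\,e_2\,Q_{\lambda,c}'(e_2)<0$. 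Thus $(\lambda(\langle\gamma\rangle),e_2(\langle\gamma\rangle))\in\mathfrak{P}$.

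For surjectivity, fix $(\lambda,e_2)\in\mathfrak{P}$. Because $c$ enters $Q_{\lambda,c}$ linearly, the single equation $Q_{\lambda,c}(e_2)=0$ determines a unique value $c=c(\lambda,e_2)$ for which $e_2$ is a root. By the identity, $Q_{\lambda,c}'(e_2)=2P_\lambda(e_2)/e_2<0$, so $Q_{\lambda,c}$ is positive just to the left of $e_2$ and negative just to the right. Combined with $Q_{\lambda,c}(0)=-1<0$ and $Q_{\lambda,c}(x)\to+\infty$ as $x\to\pm\infty$, the intermediate value theorem produces one root in each of $(e_2,+\infty)$, $(0,e_2)$ and $(-\infty,0)$; together with $e_2$ these are four roots in pairwise disjoint intervals, hence the four simple roots $e_1>e_2>e_3>0>e_4$ with $e_2$ second-largest. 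By the remark characterizing periodic solutions, $Q_{\lambda,c}$ then supports a non-constant periodic Blaschke invariant $\mu$, and integrating the Frenet system (\ref{frenet}) with $\kappa=\mu^2$ yields a $B$-curve whose invariants are exactly $(\lambda,e_2)$.

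Injectivity is where the geometry enters. If two $B$-curves share $(\lambda,e_2)$, the linearity argument above forces them to share $c$, hence the same equation (\ref{ode}) and the same phase portrait; the oval over $[e_2,e_1]$ is the unique compact component of $\mathcal{C}_{\lambda,c}^{*}$ in $\mathbb{H}$, so both Blaschke invariants traverse it and therefore agree up to a translation of the arc length. Since $\kappa=\mu^2$, the geodesic curvatures agree up to translation, and the fundamental theorem of curves for the linear system (\ref{frenet})—which determines $\mathcal{F}$, and hence $\gamma$, up to a constant factor in $\O(1,2)$—shows the two curves are equivalent. I expect the main obstacle to be precisely this last step: one must rule out a second compact component of $\mathcal{C}_{\lambda,c}^{*}$ and verify that reversing the direction of travel along the oval does not produce an inequivalent curve, both of which follow from the even symmetry of the solutions of $\dot{\mu}^2=-\mu^2 Q_{\lambda,c}(\mu)$ about their turning points $e_2$ and $e_1$.
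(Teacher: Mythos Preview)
Your proof is correct and takes a genuinely different route from the paper's. The paper proceeds by writing down explicit Vieta-type relations expressing $e_3,e_4,\lambda,c$ as algebraic functions of $(e_1,e_2)$, then for the converse direction recovers $e_1$ from $(\lambda,e_2)$ as the unique root larger than $e_2$ of an auxiliary cubic $e_2^2x^3+(e_2^3+4\lambda e_2^2)x^2+x+e_2$, and finally reconstructs $e_3,e_4,c$ from those relations. Your argument replaces all of this with the single identity $x\,Q_{\lambda,c}'(x)-2\,Q_{\lambda,c}(x)=2P_\lambda(x)$, which immediately ties the sign of $Q_{\lambda,c}'$ at any positive root to the sign of $P_\lambda$ there; the required root configuration then drops out of the intermediate value theorem. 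This is cleaner and more conceptual: it explains \emph{why} the defining inequality of $\mathfrak{P}$ is exactly $P_\lambda(e_2)<0$, and it avoids any explicit radical formulas. What the paper's approach buys, on the other hand, is precisely those explicit formulas for $e_1,e_3,e_4,c$ in terms of $(\lambda,e_2)$, which it uses repeatedly afterwards (e.g.\ the Cardano expression for $e_1$ and the formulas entering the wavelength and period-map computations). Your injectivity paragraph is also a bit more carefully argued than the paper's, which simply invokes the periodic solution with $\mu(0)=e_2$ without spelling out the uniqueness of the compact phase component or the irrelevance of the direction of traversal.
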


\begin{proof} The polynomial $Q_{\lambda,c}$ has four distinct roots  $e_1>e_2>e_3>0>e_4$  if and only if
\begin{equation}\label{rele1e2Lc}
	\begin{cases}
(i) &0<e_2<e_1\,,\\
(ii) &0<e_1^2e_2^3-2e_1-e_2\,,\\
(iii) &e_3=\frac{e_1+e_2+\sqrt{4e_1^3e_2^3+(e_1+e_2)^2}}{2e_1^2e_2^2}\,,\\ 
(iv) & e_4=\frac{-2e_1e_2}{e_1+e_2+\sqrt{4e_1^3e_2^3+(e_1+e_2)^2}}\,,\\
(v) &\lambda = -\frac{e_1^3e_2^2+e_1^2e_2^3+e_1+e_2}{4e_1^2e_2^2}\,,\\
(vi) &c=\frac{-2e_1^5 e_2^5 +e_1^6 e_2^4 + e_1^4 e_2^6 - 2(e_1^4e_2^2+e_1^2e_2^4)+(e_1+e_2)^2}{16e_1^4e_2^4}\,.
\end{cases}
\end{equation}
From  (i), (ii) and (v) it follows that $1+e_2^4+2\lambda e_2^3<0$ and $e_2>0$.   This implies that 
$(\lambda(\langle \gamma \rangle),e_2(\langle \gamma \rangle))$ belongs to ${\mathfrak P}$,  for every equivalence class $\langle\gamma\rangle$ of B-curves.
Conversely, let $(\lambda,e_2)$ be a point of ${\mathfrak P}$.  The inequalities $e_2>0$ and $1+e_2^4+2\lambda e_2^3<0$ imply that  the cubic polynomial
\begin{equation}\label{cb} 
	e_2^2x^3+\left(e_2^3+4e_2^2\lambda\right)x^2+x+e_2
\end{equation}
has a unique real root  $e_1$ strictly bigger than $e_2$.  Define $e_3,e_4$ and $c$ as in (\ref{rele1e2Lc}).  Then,  $e_1$ and $e_2$ satisfy $(ii)$ of (\ref{rele1e2Lc}) and $e_1>e_2>e_3>0>e_4$ are the four roots of $Q_{\lambda,c}$.  Let $\mu$ be the (periodic) solution of (\ref{ode}) such that $\mu(0)=e_2$ and $\gamma$ be a B-curve with $\kappa=\mu^2$.  Then,
$\langle\gamma\rangle\in {\mathfrak P}$ is the unique equivalence class of B-curves such that $\lambda(\langle \gamma \rangle) = \lambda$ and $
e_2(\langle \gamma \rangle)=e_2$. 
\end{proof}

\begin{remark} \emph{From the Cardano's formula\footnote{$\sqrt[n]{\zeta}$ is the determination of the n-th root of $\zeta\in \C$, with a branch cut discontinuity along the negative real axis such that  ${\mathfrak I}(\sqrt[n]{\zeta})>0$, if $\zeta$ belongs to the negative real axis.},  we have
\begin{equation}\label{e1}
	e_1=\frac{1}{3e_2}\left[(e_2+4\lambda)e_2 + {\mathfrak R}\left(\sqrt[3]{-8\left[{\mathtt a}+3\sqrt{3{\mathtt b}}\right]}\right) \right]
\end{equation}
where
\[\begin{split}
	{\mathtt a}&={\mathtt a}(\lambda,e_2)=e_2^6+12\lambda e_2^5+48\lambda^2 e_2^4+64\lambda^3 e_2^3+9e_2^2-18\lambda e_2\,,\\
{\mathtt b}&={\mathtt b}(\lambda,e_2)=e_2^8+12\lambda e_2^7+48\lambda^2 e_2^6+64\lambda^3 e_2^5+2e_2^4-20\lambda e_2^3+4\lambda^2 e_2^2+1\,.
\end{split}
\]
Using (\ref{e1}) and (iii), (iv), (vi) of (\ref{rele1e2Lc}), the roots $e_3$, $e_4$ and the constant $c$ can be expressed as real-analytic  functions of $\lambda$ and $e_2$.  As a rule, this dependence is implied.  If necessary, it will be explicitly indicated.}
\end{remark}

From now on the moduli space of B-curves is identified with ${\mathfrak P}$. For each ${\mathfrak p}=(\lambda,e_2)\in {\mathfrak P}$,  the polynomial $Q_{\lambda,c}$,  $c=c(\lambda,e_2)$, is denoted by $Q_{\mathfrak p}$.  Similarly,
$\mu_{\mathfrak p}$ stands for the solution of (\ref{eulerlagrange}) with initial conditions $\mu_{\frak p}(0)=e_2$, $\dot{\mu}_{\frak p}(0)=0$.  By construction,
$\mu_{\frak p}$ satisfies $\dot{\mu}_{\frak p}^2+Q_{\frak p}(\mu_{\frak p})=0$. 

We will next discuss an approach to obtain the Blaschke invariant $\mu_\mathfrak{p}$ and its associated B-curve $\gamma_\mathfrak{p}$. Let ${\mathfrak p}=(\lambda,e_2)\in {\mathfrak P}$, $e_1,e_3,e_4$ be the other three roots of $Q_{\mathfrak p}$ and
$\omega_{\mathfrak p}$ be the least period of $\mu_{\mathfrak p}$.   We call $\omega: {\mathfrak p}\in {\mathfrak P}\to \omega_{\mathfrak p}\in \R$ the \emph{wavelength function}.  From 256.12 and 340.04 of \cite{BF} we obtain
\begin{equation}\label{bcwlpp2} 
	\omega_{\mathfrak p}=2\int_{e_2}^{e_1} \frac{dx}{x\sqrt{-Q_{\mathfrak p}(x)}}=
	\frac{2{\mathtt g}}{e_1}\left(\frac{{\mathtt a}}{{\mathtt n}}{\rm K}({\mathtt m})-\frac{{\mathtt a}-{\mathtt n}}{{\mathtt n}}\Pi({\mathtt n},{\mathtt m})\right),
\end{equation}
where ${\rm K}$ and $\Pi$ are the complete elliptic integrals of the first and third kind respectively, and
\begin{equation}\label{ellipticmoduli}
	{\mathtt a}=\frac{e_2-e_1}{e_2-e_4},\,\, \,  {\mathtt m}=\frac{(e_1-e_2)(e_3-e_4)}{(e_1-e_3)(e_2-e_4)},\,\,\,   {\mathtt n}=\frac{e_4 {\mathtt a}}{e_1},\,\,\, {\mathtt g}=\frac{2}{\sqrt{(e_1-e_3)(e_2-e_4)}}.
\end{equation}
It then follows from \eqref{ellipticmoduli} that the wavelength depends in a real-analytic fashion on ${\mathfrak p}$. On the other hand, the function $\mu_{\mathfrak p}$ is strictly increasing on $[0,\omega_{{\mathfrak p}}/2]$.  Let $h_{\mathfrak p}:[e_2,e_1]\to [0,\omega_{\mathfrak p}/2]$ be the inverse of $\mu_{\mathfrak p}|_{[0,\omega_{\mathfrak p}/2]}$. From $\dot{\mu}_\mathfrak{p}^2=-\mu_\mathfrak{p}^2 Q_{\mathfrak p}(\mu_\mathfrak{p})$, we have
$$h_{\mathfrak p}(\mu)= \frac{\omega_{\mathfrak p}}{2} - \int^{e_1}_{\mu}\frac{dx}{x\sqrt{-Q_{\mathfrak p}(x)}}\,,\quad\quad\quad \mu \in [e_2,e_1].$$
This integral can be solved in terms of incomplete elliptic integrals and Jacobi's functions (\cite{BF},  $257.12$ and $340.04$).  As a result we obtain
\begin{equation}\label{hfunction}
	h_{\mathfrak p}(\mu)=\frac{\omega_{\mathfrak p}}{2}-\frac{{\mathtt g}}{e_1}\left(\frac{{\mathtt a}}{{\mathtt n}}{\mathtt u}(\mu)-\frac{{\mathtt a}-{\mathtt n}}{{\mathtt n}}\Pi({\mathtt n}, {\rm am}_{{\mathtt m}}({\mathtt u}(\mu)),{\mathtt m})\right),  
\end{equation}
where $\Pi({\mathtt n},-,{\mathtt m})$ is the incomplete elliptic integral of the third kind with parameters ${\mathtt n}$ and ${\mathtt m}$,
$ {\rm am}_{{\mathtt m}}$ is the Jacobi's amplitude with parameter ${\mathtt m}$, and
$${\mathtt u}(\mu)={\rm sn}^{-1}\left(\sqrt{\frac{(e_2-e_4)(e_1-\mu)}{(e_1-e_2)(\mu-e_4)}},{\mathtt m}\right).$$
Then, $\mu_{\mathfrak p}|_{[0,\omega_{\mathfrak p}/2]}=h_{\mathfrak p}^{-1}$.  Since $\mu_{\mathfrak p}$ is even,  this suffices to reconstruct $\mu_{\mathfrak p}$ on the whole real axis. The B-curves with curvature $\kappa_{\mathfrak p}=\mu_{\mathfrak p}^2$ can be numerically evaluated solving the linear system (\ref{frenet}), with appropriate initial conditions.  

\begin{remark} \emph{The algebraic curves $y^2+x^2Q_{\mathfrak p}(x)=0$ possess a singularity at the origin.  This is the geometric reason behind the fact that the Blaschke invariant $\mu_\mathfrak{p}$ cannot be expressed through elliptic functions as in the case of constrained and unconstrained elasticae of the hyperbolic plane \cite{He,LS2}. Therefore, a more practical procedure to build $\mu_{\mathfrak p}$ is to solve numerically the second-order ordinary differential equation \eqref{eulerlagrange} with initial conditions $\mu_\mathfrak{p}(0)=e_2$ and $\dot{\mu}_\mathfrak{p}(0)=0$.}
\end{remark}

There are three possible types of B-curves depending on the causal character of the momentum $\vec{\xi}$: either $\vec{\xi}$ is space-like or light-like or else time-like. Each of these cases carries different signs on the constant $c=\vec{\xi}\cdot\vec{\xi}$ of the conservation law \eqref{ode} and the associated B-curves present essentially different behaviors. Thus, we will distinguish between them.

\begin{defn} We say that a B-curve $\gamma$ is a \emph{BS-curve} (resp., \emph{BL-} or \emph{BT-curve}) if its momentum $\vec{\xi}$ is space-like (resp., light-like or time-like).
\end{defn}

It is also convenient to split the moduli space $\mathfrak{P}$ in three different subdomains depending on whether the B-curves associated with $\mathfrak{p}=(\lambda,e_2)$ are BS-, BL-, or BT-curves (see Figure \ref{FIG4}).

\begin{defn} The open subdomains ${\mathcal S}=\{(\lambda,e_2)\in {\mathfrak P}\, /\,  c(\lambda,e_2)>0\}$, ${\mathcal T}=\{(\lambda,e_2)\in {\mathfrak P}\, /\,   c(\lambda,e_2)<0 \}$ and the separating curve 
${\mathcal L}=\{(\lambda,e_2)\in {\mathfrak P}\, /\, c(\lambda,e_2)=0 \}$ are the \emph{moduli spaces of  BS-, BT- and BL-curves}, respectively.
\end{defn}

\begin{figure}[h]
	\begin{center}
		\includegraphics[height=4cm,width=6cm]{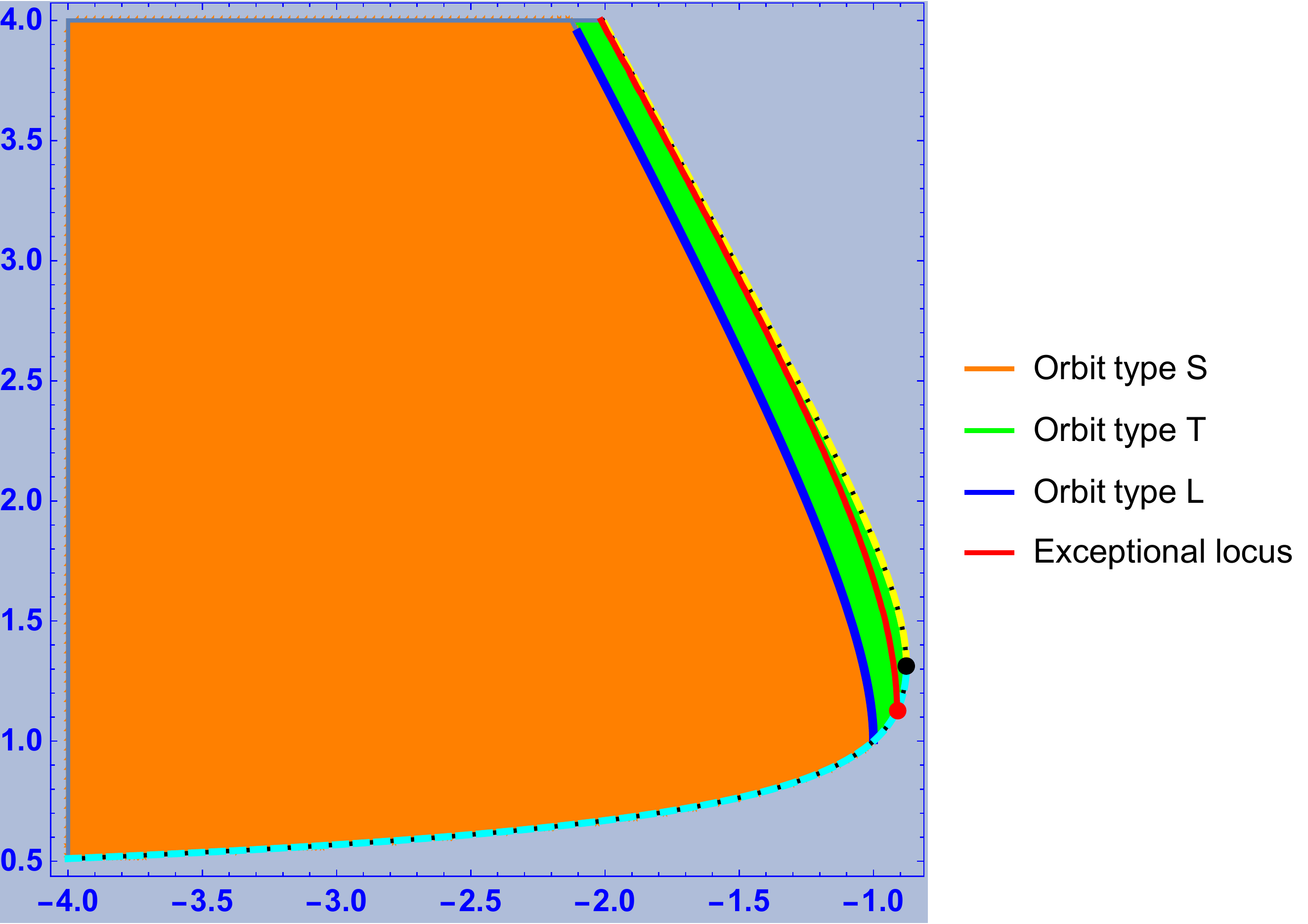}
		\caption{\small{The moduli space $\mathfrak{P}$. The blue curve is ${\mathcal L}$, the orange region is  ${\mathcal S}$ and the green region is ${\mathcal T}$. The red curve is the exceptional locus ${\mathcal E}$.    }}\label{FIG4}
	\end{center}
\end{figure}

\begin{remark}\label{lemma4Modulispaces} \emph{The moduli spaces ${\mathcal S}$, ${\mathcal L}$ and ${\mathcal T}$ can be characterized as follows:
	\begin{itemize}
		\item $(\lambda,e_2)\in {\mathcal S}\, \iff  (\lambda,e_2)\in {\mathfrak P}\,\,\,\, {\rm and}\,\,\,\, e_2^2+2\lambda e_2+1<0$\,,
		\item $(\lambda,e_2)\in {\mathcal L}\, \iff (\lambda,e_2)\in {\mathfrak P}\,\,\,\, {\rm and}\,\,\,\,  e_2^2+2\lambda e_2+1=0$\,,
		\item $(\lambda,e_2)\in {\mathcal T} \iff (\lambda,e_2)\in {\mathfrak P}\,\,\,\, {\rm and}\,\,\,\,  e_2^2+2\lambda e_2+1>0$\,.
	\end{itemize}
The curve ${\mathcal L}$ is also the graph of the function
\begin{equation}\label{br1}
	b_0:\lambda\in (-\infty,-1)\longmapsto -\lambda+\sqrt{\lambda^2-1}\,.
\end{equation}
The boundary $\partial{\mathfrak P}$ is the curve $1 + e_2^4 + 2 e_2^3 \lambda= 0$,  $\lambda<-2/\sqrt[4]{27}$. This curve is the union of the graphs $\partial_{\pm}{\mathfrak P}$ of the functions $\eta_{\pm}$ defined in (\ref{eta}) (the dotted curves colored in blue and yellow in Figure \ref{FIG4}).}
\end{remark} 

We now study each type of B-curves separately. For each case we will obtain explicit parameterizations by quadratures. The explicit expressions of the B-curves are found with a more conceptual approach based on the Marsden-Weinstein reduction method as applied to the Hamiltonian action of $\O(1,2)$ on the Griffith's phase space of the variational problem \cite{GM,G,J,MP,OR}.

\subsection{BL-Curves}\label{OTL}

Let ${\mathfrak p}\in {\mathcal L}$. Then $\mathfrak{p}$ is of the form  ${\mathfrak p}=(\lambda,  e_2(\lambda))$,    where $e_2(\lambda)=-\lambda+\sqrt{\lambda^2-1}$ and $\lambda <-1$. The B-curves associated with $\mathfrak{p}\in\mathcal{L}$ have constant $c=0$ in the conservation law \eqref{ode}. In other words, their momenta are light-like vectors.

\begin{thm}\label{TypeL} Let ${\mathfrak p}=(\lambda,e_2)\in {\mathcal L}$ and $\mu_{\mathfrak p}$ be the solution of (\ref{ode}) with initial condition $\mu_\mathfrak{p}(0)=e_2$ where $c=0$. Define $\Theta_{\mathfrak p}$ by
\begin{equation*}
	\Theta_\mathfrak{p}(s)=-\int _0^s \mu_{\mathfrak p}^2\left(\mu_{\mathfrak p}+2\lambda\right)ds\,.
\end{equation*}
Then,
\begin{equation}\label{eq3OTL}
	\gamma_{{\mathfrak p}}=\frac{1}{2\sqrt{2}\,\mu_{\mathfrak p}}\left(2\Theta_\mathfrak{p}^2+2\mu_{\mathfrak p}^2+1,2\sqrt{2}\,\Theta_\mathfrak{p},  2\Theta_\mathfrak{p}^2+2\mu_{\mathfrak p}^2-1\right)
\end{equation}
is a BL-curve with modulus ${\mathfrak p}$ and momentum  $\vec{\xi}=\frac{1}{\sqrt{2}}(1,0,1)$.
There are no BL-strings.
\end{thm}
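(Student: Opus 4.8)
The statement combines an explicit construction with a non-existence result, so I would split the proof accordingly. The algebraic backbone of both parts is the simplification produced by $c=0$: in that case the quartic factors as $Q_{\mathfrak p}(x)=x^4+4\lambda x^3+4\lambda^2x^2-1=x^2(x+2\lambda)^2-1$, so that the conservation law \eqref{ode} and the definition of $\Theta_{\mathfrak p}$ give the two identities
\[
\dot\Theta_{\mathfrak p}=-\mu_{\mathfrak p}^2(\mu_{\mathfrak p}+2\lambda),\qquad \dot\mu_{\mathfrak p}^2+\dot\Theta_{\mathfrak p}^2=\mu_{\mathfrak p}^2.
\]

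For the construction, the cleanest route is to pass to the null coordinates $u=\gamma^1-\gamma^3$, $v=\gamma^1+\gamma^3$, $w=\gamma^2$, in which \eqref{eq3OTL} reads $u=1/(\sqrt2\,\mu_{\mathfrak p})$, $w=\Theta_{\mathfrak p}/\mu_{\mathfrak p}$, $v=\sqrt2(\Theta_{\mathfrak p}^2+\mu_{\mathfrak p}^2)/\mu_{\mathfrak p}$. The quadratic form is $-uv+w^2$, and one checks at once that $-uv+w^2=-1$ with $\gamma_{\mathfrak p}^1>0$, so $\gamma_{\mathfrak p}$ takes values in $\HH^2$. Differentiating once and using the two identities above collapses $\dot\gamma_{\mathfrak p}\cdot\dot\gamma_{\mathfrak p}$ to $\mu_{\mathfrak p}^2(\dot\mu_{\mathfrak p}^2+\dot\Theta_{\mathfrak p}^2)/\mu_{\mathfrak p}^4=1$, so $s$ is the arc length. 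A second differentiation, inserted into the Frenet equations \eqref{frenet} applied to the frame $(\gamma_{\mathfrak p},\dot\gamma_{\mathfrak p},\gamma_{\mathfrak p}\times\dot\gamma_{\mathfrak p})$, identifies the geodesic curvature as $\mu_{\mathfrak p}^2$; hence $\mu_{\mathfrak p}$ is the Blaschke invariant and, since it solves \eqref{eulerlagrange}, $\gamma_{\mathfrak p}$ is a $1/2$-elastica with multiplier $\lambda$. Its momentum, the constant left-hand side of \eqref{Noether}, may then be evaluated at $s=0$, where $\dot\mu_{\mathfrak p}(0)=0$ and $\mu_{\mathfrak p}(0)=e_2$; here the defining relation $e_2^2+2\lambda e_2+1=0$ of $\mathcal L$ is exactly what makes the two null components coincide and yields $\vec\xi=\tfrac1{\sqrt2}(1,0,1)$, which is light-like. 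Thus $\gamma_{\mathfrak p}$ is a BL-curve of modulus $\mathfrak p$.

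For the non-existence of BL-strings, the key point is that $\gamma_{\mathfrak p}$ escapes to infinity. Since $\dot\Theta_{\mathfrak p}$ is $\omega_{\mathfrak p}$-periodic, $\Theta_{\mathfrak p}(s+\omega_{\mathfrak p})=\Theta_{\mathfrak p}(s)+\Delta$ for the constant drift $\Delta=\Theta_{\mathfrak p}(\omega_{\mathfrak p})$. I would compute $\Delta$ by reducing it to a single monotone branch $\mu_{\mathfrak p}\colon[0,\omega_{\mathfrak p}/2]\to[e_2,e_1]$: with $f(\mu)=\mu(\mu+2\lambda)$ and $-Q_{\mathfrak p}(\mu)=1-f(\mu)^2$, the substitution $t=f(\mu)$, $d\mu=dt/(2\sqrt{t+\lambda^2})$ (valid since $\mu>-\lambda$ on $[e_2,e_1]$) gives
\[
\Delta=-2\int_{e_2}^{e_1}\frac{\mu(\mu+2\lambda)}{\sqrt{-Q_{\mathfrak p}(\mu)}}\,d\mu=-\int_{-1}^{1}\frac{t\,dt}{\sqrt{(1-t^2)(t+\lambda^2)}}.
\]
Pairing $t$ with $-t$ and using $t+\lambda^2>\lambda^2-t>0$ on $(0,1)$ (which holds because $\lambda^2>1$) shows the last integral is strictly negative, so $\Delta>0$. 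Hence $\Theta_{\mathfrak p}(n\omega_{\mathfrak p})=n\Delta\to\infty$, and as $\mu_{\mathfrak p}$ stays in $[e_2,e_1]$ the first coordinate $\gamma_{\mathfrak p}^1$, which equals $\cosh$ of the distance to $(1,0,0)$, is unbounded. A closed curve has compact image, so $\gamma_{\mathfrak p}$ is not closed and there are no BL-strings.

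The conceptual reason for the drift, which I would record as a remark, is that the monodromy $\mathfrak m_\gamma$ fixes the conserved momentum; for a BL-curve $\vec\xi$ is null, so $\mathfrak m_\gamma$ lies in the unipotent stabilizer of a null vector in $\O(1,2)$, a one-parameter parabolic subgroup whose only finite-order element is the identity. Closure would force $\mathfrak m_\gamma=\mathrm{id}$, hence $\Theta_{\mathfrak p}$ periodic and $\Delta=0$. The main obstacle is precisely the second part: the first-part verification is routine once the two $c=0$ identities are in hand, whereas ruling out BL-strings requires a sign-definite evaluation of $\Delta$. The asymmetric factor $t+\lambda^2$ — which has no analogue in the flat case, where the corresponding integrand is odd and $\Delta$ would vanish — is exactly what forces $\Delta\neq0$ and kills the strings.
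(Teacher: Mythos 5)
Your proposal is correct, and for the existence part it is essentially the paper's proof in a lightly streamlined form: the paper verifies $\gamma\cdot\gamma=-1$, $\dot\gamma\cdot\dot\gamma=1$ and $\kappa=\mu^2$ by direct computation and then evaluates \eqref{Noether} at $s=0$ using $e_2^2+2\lambda e_2+1=0$, exactly as you do; your null coordinates $u=\gamma^1-\gamma^3$, $v=\gamma^1+\gamma^3$, $w=\gamma^2$ and the identity $\dot\mu_{\mathfrak p}^2+\dot\Theta_{\mathfrak p}^2=\mu_{\mathfrak p}^2$ (equivalent to the $c=0$ factorization $Q_{\mathfrak p}=f^2-1$ with $f(\mu)=\mu(\mu+2\lambda)$) just make the bookkeeping cleaner. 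Where you genuinely diverge is the non-existence of BL-strings. Both arguments reduce closure to the vanishing of the drift $\Theta_{\mathfrak p}(\omega_{\mathfrak p})$, but the paper evaluates the resulting integral via the Byrd--Friedman reductions 257.11, 336.01--02, 340.02, obtaining a positive multiple of ${\rm E}({\mathtt m}_\lambda)-{\rm K}({\mathtt m}_\lambda)$ and invoking ${\rm E}<{\rm K}$, whereas you exploit the same factorization $-Q_{\mathfrak p}=1-f^2$ to substitute $t=f(\mu)$ and land on the elementary integral $\int_{-1}^{1}t\,dt/\sqrt{(1-t^2)(\lambda^2+t)}$, whose sign follows from pairing $t$ with $-t$. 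Your route avoids elliptic integrals entirely and makes transparent \emph{why} the drift is nonzero (the asymmetry $\lambda^2+t$ versus $\lambda^2-t$), which is a real gain in readability; the paper's route has the advantage of producing a closed-form value of $\Theta_{\mathfrak p}(\omega_{\mathfrak p})$. Two small remarks: your sign $\Theta_{\mathfrak p}(\omega_{\mathfrak p})>0$ is the one consistent with the stated convention that $\mu_{\mathfrak p}$ increases on $[0,\omega_{\mathfrak p}/2]$ (the paper's displayed $ds=d\mu/(-\mu\sqrt{-Q_{\mathfrak p}})$ carries a sign slip and leads to the opposite sign), but only $\Theta_{\mathfrak p}(\omega_{\mathfrak p})\neq 0$ is needed, so nothing is affected; and your concluding step via unboundedness of $\gamma_{\mathfrak p}^1$, as well as the monodromy remark about the unipotent parabolic stabilizer, are both consistent with the discussion the paper gives immediately after the theorem.
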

\begin{proof} For simplicity, we will omit the subscript $\mathfrak{p}$ throughout this proof. From (\ref{eq3OTL}) it follows that $\gamma\cdot \gamma=-1$ and furthermore we have that
$$\dot{\gamma}\cdot\dot{\gamma}=\frac{\dot{\mu}^2+\mu^6+4\lambda \mu^5+4\lambda^2\mu^4}{\mu^2}.$$
Using (\ref{ode}) we obtain $\dot{\gamma}\cdot\dot{\gamma}=1$.  Then, $\gamma$ is parameterized by arc length.  Computing $\kappa=\ddot{\gamma}\cdot (\gamma\times \dot{\gamma})$ we find
$$\kappa=-(\mu+2\lambda)\ddot{\mu}+2(1+\lambda \mu^{-1})\dot{\mu}^2-\mu^6-6\lambda\mu^5-12\lambda^2\mu^4-8\lambda^3\mu^3.
$$
Then, from  (\ref{eulerlagrange}) and (\ref{ode}) we obtain $\kappa=\mu^2$.  This implies that $\gamma$ is a B-curve with parameters $(\lambda,e_2(\lambda))$. In addition, we have
$$\begin{cases}\gamma|_0=\frac{1}{2\sqrt{2}}	\left(\sqrt{\lambda^2-1}-3\lambda,0,3\sqrt{\lambda^2-1}-\lambda\right),\\
\dot{\gamma}|_0=\left(0,1,0\right),\\
\gamma|_0\times \dot{\gamma}|_0=\frac{1}{2\sqrt{2}}\left(3\sqrt{\lambda^2-1}-\lambda,0,\sqrt{\lambda^2-1}-3\lambda\right).
\end{cases}
$$
Keeping in mind that $\mu(0)=e_2(\lambda)$ and $\dot{\mu}(0)=0$,  we obtain
$$\vec{\xi}=\frac{1}{e_2(\lambda)}\gamma|_0-\left(\lambda+\frac{e_2(\lambda)}{2}\right)\gamma|_0\times \dot{\gamma}|_0=\frac{1}{\sqrt{2}}(1,0,1).$$
We now show that BL-strings do not exist. For $\gamma$ to be periodic, it follows from \eqref{eq3OTL} that $\Theta(\omega)$ needs to be zero where $\omega$ is the least period of $\kappa$, ie. the wavelength. In this case, the least period of $\gamma$ is also $\omega$. On $[0,\omega/2]$ we have from \eqref{ode}
$$ds=\frac{d\mu}{-\mu\sqrt{-(\mu^4+4\lambda \mu^3+4\lambda^2 \mu^2-1)}}\,.$$
Then,
\begin{eqnarray}\label{eq4OTL}
	\Theta(\omega)&=&-\int_0^\omega \mu^2(\mu+2\lambda)\,ds=-2\int_0^{\omega/2}\mu^2(\mu+2\lambda)\,ds\nonumber\\&=&2\int_{e_2}^{e_1}\frac{\mu\left(\mu+2\lambda\right)}{\sqrt{-\left(\mu^4+4\lambda \mu^3+4\lambda^2\mu^2-1\right)}}\,d\mu\,,
\end{eqnarray}
where 
$$e_1=-\lambda+\sqrt{\lambda^2+1}>e_2=-\lambda +\sqrt{\lambda^2-1}\,.$$
The right hand side of (\ref{eq4OTL}) is a standard elliptic integral that can be solved using 257.11, 336.01, 336.02 and 340.02 of \cite{BF}.   As a result we obtain
$$\Theta(\omega) =2 \frac{\sqrt{2}\left(2\lambda^4+2\lambda^2\sqrt{\lambda^2-1}\,\right)}{\left(\lambda^2+\sqrt{\lambda^4-1}\,\right)^{3/2}}\left({\rm E}({\mathtt m}_{\lambda})-{\rm K}({\mathtt m}_{\lambda})\right),
$$
where ${\rm E}$ is the complete elliptic integral of the second kind and
$${\mathtt m}_{\lambda}=\frac{\lambda^2-\sqrt{\lambda^4-1}}{\lambda^2+\sqrt{\lambda^4-1}}\in (0,1)\,.$$
Since ${\rm E}(m)<{\rm K}(m)$ for every $m\in (0,1)$, it follows that
$$\Theta(\omega)<0\,,$$
and so there are no BL-strings. 
\end{proof}

\begin{defn} The BL-curve $\gamma_\mathfrak{p}$ given by \eqref{eq3OTL} is referred to as the \emph{standard BL-curve} with modulus $\mathfrak{p}\in\mathcal{L}$.
\end{defn}

\begin{remark} \emph{Every BL-curve with modulus  ${\mathfrak p}\in {\mathcal L}$ is equivalent to $\gamma_{{\mathfrak p}}$, \eqref{eq3OTL}.  From now on we implicitly assume that the BL-curves in consideration are in their standard form.}
\end{remark}

Let ${\mathfrak p}=(\lambda,-\lambda+\sqrt{\lambda^2-1})\in {\mathcal L}$ and $\gamma_\mathfrak{p}$ be the standard BL-curve with modulus $\mathfrak p$ defined as in (\ref{eq3OTL}). Adopting the Poincar\'e model of the hyperbolic plane, the curve $\gamma_\mathfrak{p}$ is parameterized by
$$\gamma_\mathfrak{p}=\frac{1}{2\Theta^2_\mathfrak{p}+\left(1+\sqrt{2}\,\mu_\mathfrak{p}\right)^2}\left(2\sqrt{2}\,\Theta_\mathfrak{p}, 2\Theta_\mathfrak{p}^2+2\mu_\mathfrak{p}^2-1\right).
$$
The stabilizer of the momentum is the parabolic subgroup
$${\rm P}=\left\{{\rm HP}(t) =  \begin{pmatrix}
1+\frac{t^2}{2}&t&-\frac{t^2}{2}\\
t&1&-t\\
\frac{t^2}{2}&t&1-\frac{t^2}{2}
\end{pmatrix}\,/\, t\in \R \right\}.$$
The monodromy  ${\mathfrak m}$ is the non-trivial element ${\rm HP}(\sqrt{2}\,\Theta_\mathfrak{p}(\omega_\mathfrak{p}))$ of ${\rm P}$.   Let
$\Gamma=\gamma_\mathfrak{p}([0,\omega_\mathfrak{p}])$ be the fundamental arc,  then $|[\gamma_\mathfrak{p}]|=\cup_{n\in \Z}{\mathfrak m}^n(\Gamma)$.  Since the curvature is an even function,  
$|[\gamma_\mathfrak{p}]|$ is invariant by the reflection with respect to the Oy-axis. The segment $\{(0,v)\,/\, -1<v<1\}$ is a slice for the action of ${\rm P}$ on ${\rm D}^2$.  The orbit ${\mathcal O}_{v}$  through $(0,v)$  is the intersection of  ${\rm D}^2$ with the circle passing through $(0,v)$, ${\rm p}=(0,1)$ and tangent to $\partial {\rm D}^2$ at  ${\rm p}$.  Denote by $\mathcal O^-$ and by $\mathcal O^+$ the orbits of ${\rm P}$  through $\gamma_\mathfrak{p}(0)$ and $\gamma_\mathfrak{p}(\omega_\mathfrak{p}/2)$ respectively,  referred to as the \emph{lower and upper osculating circles} of $\gamma_\mathfrak{p}$.   The trajectory $|[\gamma_\mathfrak{p}]|$ is contained in the lunular region of ${\rm D}^2$ bounded by $\mathcal O^+$ and $\mathcal O^-$;  is tangent to $\mathcal O^-$ at $\gamma_\mathfrak{p}(n\omega_\mathfrak{p})$, $n\in \Z$,  and to  $\mathcal O^+$ at $\gamma_\mathfrak{p}(\omega_\mathfrak{p}/2+n\omega_\mathfrak{p})$, $n\in \Z$,  with limit point
$$\lim_{s\to +\infty} \gamma_\mathfrak{p}(s) = \lim_{s\to- \infty} \gamma_\mathfrak{p}(s) = {\rm p}\,.$$
When $\lambda\to -1^-$,   $\mathcal O^-$ and $|[\gamma_\mathfrak{p}]|$ tend to ${\mathcal O}_{2-\sqrt{2}}$. When $\lambda$ decreases, $|[\gamma_\mathfrak{p}]|$ and   $\mathcal O^-$   inflect (ie.  their radii shrink)  and,   when $\lambda\to -\infty$,  tend to ${\rm p}$.  
Figure \ref{FIG12} reproduces the trajectories of BL-curves with $\lambda=-1.01, -1.17,-1.3$  and $-2$, respectively (all numerical values are rounded up to a maximum of two decimals).

\begin{figure}[h]
		\makebox[\textwidth][c]{
		\includegraphics[height=4cm,width=4cm]{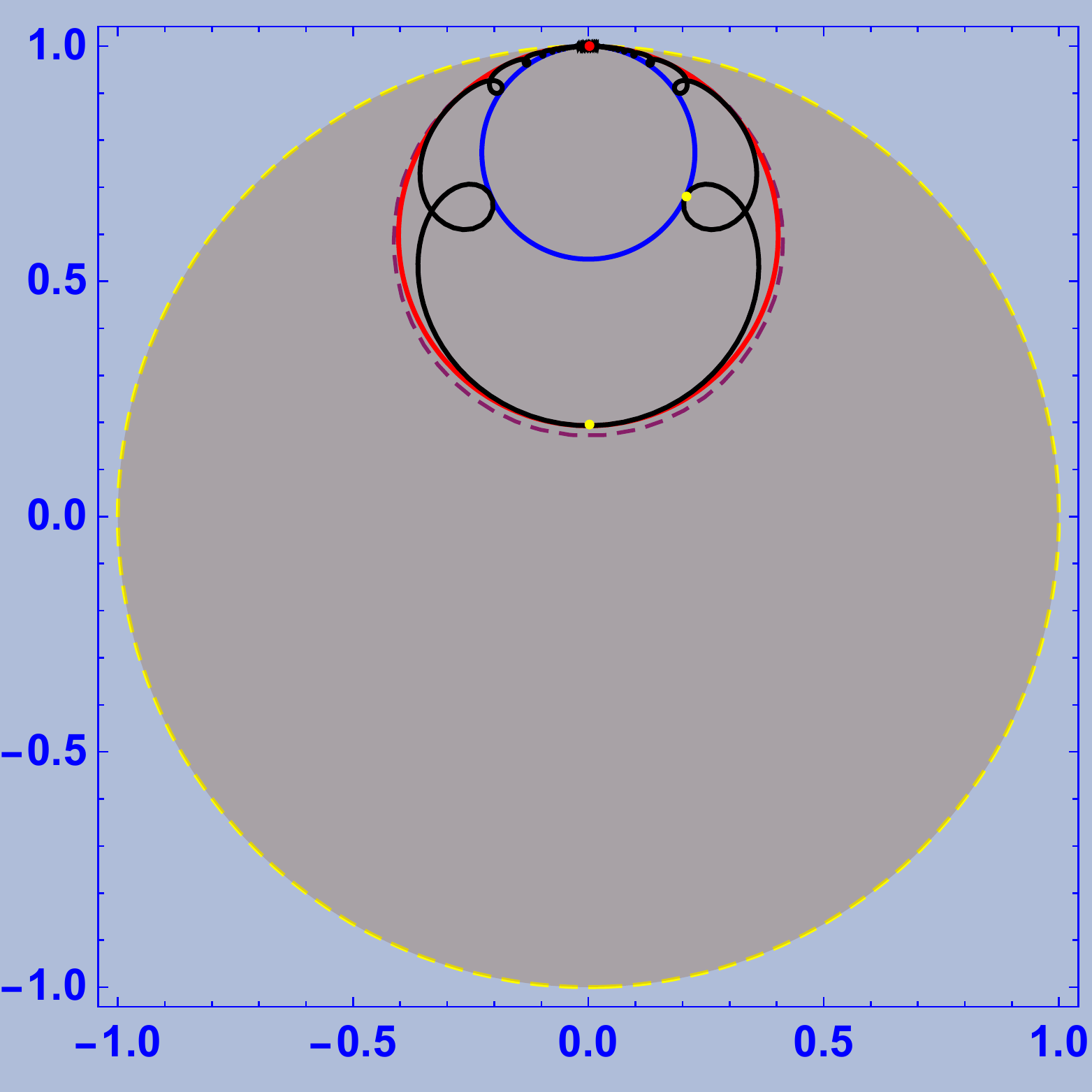}\quad
		\includegraphics[height=4cm,width=4cm]{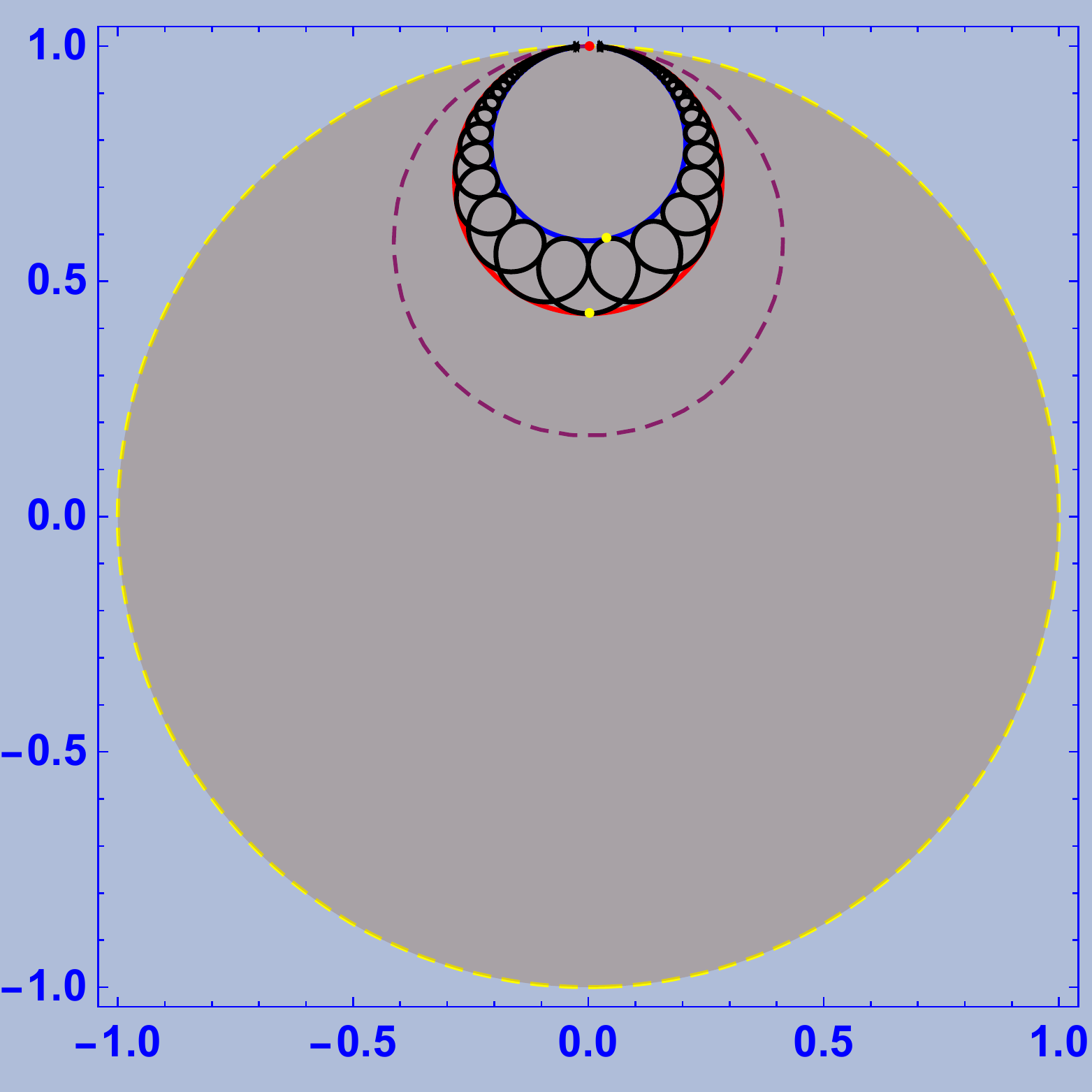}\quad
		\includegraphics[height=4cm,width=4cm]{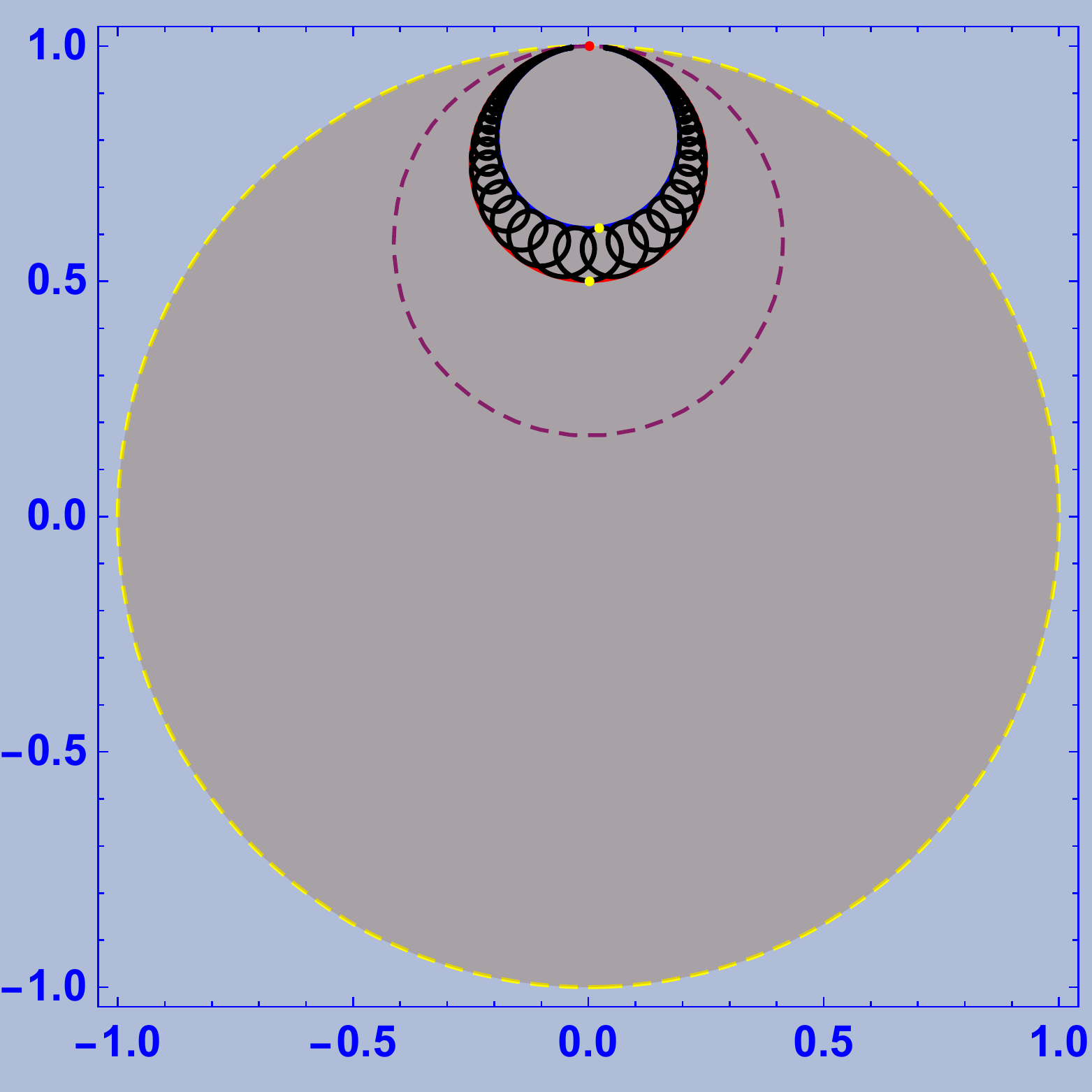}\quad
		\includegraphics[height=4cm,width=4cm]{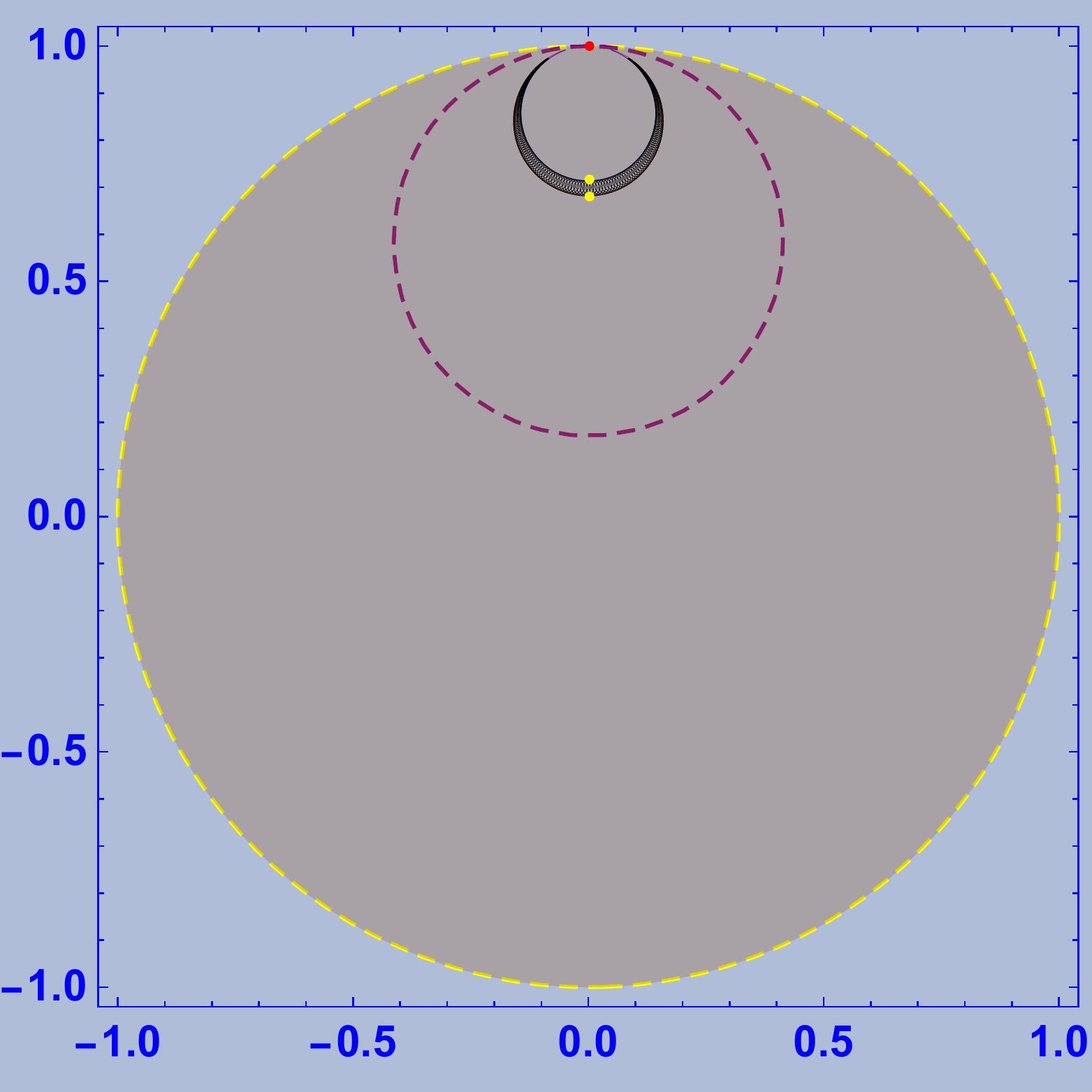}
	}
		\caption{\small{Trajectories of BL-curves with decreasing multiplier $\lambda$. The red and blue curves are the osculating circles. The dashed purple curve is the limit circle  ${\mathcal O}_{2-\sqrt{2}}$.
		}}\label{FIG12}
\end{figure}

\subsection{BS-Curves}\label{OTS}

Let $\mathfrak{p}=(\lambda,e_2)\in\mathcal{S}$. The corresponding B-curves have space-like momentum and $c>0$ holds in the conservation law \eqref{ode}.

\begin{thm}\label{TypeS} Let ${\mathfrak p}=(\lambda,e_2)\in {\mathcal S}$ and $\mu_{{\mathfrak p}}$ be the solution of \eqref{ode} with initial condition $\mu_\mathfrak{p}(0)=e_2$, where $c>0$ is as in $(vi)$ of (\ref{rele1e2Lc}). Define $\Theta_\mathfrak{p}$ by
$$\Theta_\mathfrak{p}(s) = 2\sqrt{c\,} \int_0^s \frac{\mu_{\mathfrak p}^2(\mu_{\mathfrak p}+2\lambda)}{1+4c\mu_{\mathfrak p}^2}ds\,.$$
Then,
\begin{equation}\label{typeS}
	\gamma_{{\mathfrak p}}= \frac{1}{2\sqrt{c}\,\mu_{\mathfrak p}}\left(\sqrt{1+4c\mu_{\mathfrak p}^2\,}\,\cosh(\Theta_\mathfrak{p}), \sqrt{1+4c\mu_{\mathfrak p}^2\,}\,\sinh(\Theta_\mathfrak{p}),1\right)
\end{equation}
is a BS-curve with modulus ${\mathfrak p}$ and momentum $\vec{\xi}=(0,0,-\sqrt{c\,})$. There are no BS-strings.
\end{thm}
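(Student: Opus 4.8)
The plan is to follow the blueprint of Theorem \ref{TypeL}: first verify that \eqref{typeS} defines a unit-speed curve in $\HH^2$ whose geodesic curvature is $\mu_{\mathfrak p}^2$ and whose momentum is $(0,0,-\sqrt c)$, and only then address closure. Throughout I would write $f=\sqrt{1+4c\mu_{\mathfrak p}^2}$, so that $\gamma_{\mathfrak p}=\tfrac{1}{2\sqrt c\,\mu_{\mathfrak p}}(f\cosh\Theta_{\mathfrak p},f\sinh\Theta_{\mathfrak p},1)$ and $\dot\Theta_{\mathfrak p}=2\sqrt c\,\mu_{\mathfrak p}^2(\mu_{\mathfrak p}+2\lambda)/f^2$, and I would suppress the subscript $\mathfrak p$ as in Theorem \ref{TypeL}.

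The four verifications proceed exactly as in the light-like case. First, $\gamma\cdot\gamma=-1$ is immediate since $\cosh^2\Theta-\sinh^2\Theta=1$ collapses the first two (Lorentzian) terms against the third, so $\gamma$ lands in $\HH^2$. Second, differentiating \eqref{typeS} and eliminating $\dot\mu^2$ with \eqref{ode} gives $\dot\gamma\cdot\dot\gamma=1$, so $\gamma$ is parameterized by arc length. Third, computing $\kappa=\ddot\gamma\cdot(\gamma\times\dot\gamma)$ and simplifying with \eqref{eulerlagrange} and \eqref{ode} yields $\kappa=\mu^2$, so $\gamma$ is a $B$-curve with modulus $\mathfrak p$. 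Finally, evaluating the Frenet frame at $s=0$ (where $\mu=e_2$, $\dot\mu=0$, $\Theta=0$) and substituting into the conservation law \eqref{Noether} produces the momentum; here the decisive simplification is the identity $e_2^2(e_2+2\lambda)^2=1+4ce_2^2$, which is nothing but $Q_{\mathfrak p}(e_2)=0$, and it forces the first and third components to collapse to $\vec\xi=(0,0,-\sqrt c)$. Since $\vec\xi\cdot\vec\xi=c>0$ the momentum is space-like, so $\gamma_{\mathfrak p}$ is a BS-curve.

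For the nonexistence of BS-strings, the structural observation is that \eqref{typeS} displays $\gamma_{\mathfrak p}(s)=\mathrm{HB}(\Theta_{\mathfrak p}(s))\,\beta(s)$, where $\beta(s)=\tfrac{1}{2\sqrt c\,\mu_{\mathfrak p}}(f,0,1)$ is $\omega_{\mathfrak p}$-periodic and $\mathrm{HB}(t)$ is the one-parameter group of boosts fixing $\vec\xi=(0,0,-\sqrt c)$. As $\dot\Theta_{\mathfrak p}$ is $\omega_{\mathfrak p}$-periodic, the monodromy is the boost $\mathrm{HB}(\Theta_{\mathfrak p}(\omega_{\mathfrak p}))$; the third component of $\gamma_{\mathfrak p}$ is automatically $\omega_{\mathfrak p}$-periodic, while the first two close up precisely when $\cosh\Theta_{\mathfrak p}(\omega_{\mathfrak p})=1$ and $\sinh\Theta_{\mathfrak p}(\omega_{\mathfrak p})=0$. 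Equivalently, a nontrivial boost has infinite order, so a string would require trivial monodromy. Hence $\gamma_{\mathfrak p}$ closes if and only if $\Theta_{\mathfrak p}(\omega_{\mathfrak p})=0$, and it suffices to rule this out. Passing to the curvature variable via $ds=d\mu/(\mu\sqrt{-Q_{\mathfrak p}})$ and using that $\mu_{\mathfrak p}$ is even, I would reduce matters to showing that
\[
\Theta_{\mathfrak p}(\omega_{\mathfrak p})=4\sqrt c\int_{e_2}^{e_1}\frac{\mu(\mu+2\lambda)}{(1+4c\mu^2)\sqrt{-Q_{\mathfrak p}(\mu)}}\,d\mu
\]
is strictly negative.

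The main obstacle is exactly the sign of this integral. By the characterization of $\mathcal S$ in Remark \ref{lemma4Modulispaces} one has $e_2<-2\lambda$, so when $e_1\le-2\lambda$ the numerator $\mu(\mu+2\lambda)$ is nonpositive on $(e_2,e_1)$ and the integral is negative with no further work. The delicate case is $-2\lambda\in(e_2,e_1)$, in which the integrand changes sign once and no pointwise estimate applies; here I would reduce the integral to complete elliptic integrals of the first, second and third kind through the formulas of \cite{BF}, as was done for the wavelength in \eqref{bcwlpp2} and for $\Theta$ in Theorem \ref{TypeL}, and then establish the resulting strict inequality among $\mathrm{K}$, $\mathrm{E}$ and $\Pi$. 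This inequality is the crux: unlike the light-like case, where $\Theta_{\mathfrak p}(\omega_{\mathfrak p})$ reduced to a positive multiple of $\mathrm{E}-\mathrm{K}<0$, the factor $1+4c\mu^2$ forces a genuine third-kind contribution whose sign is invisible termwise, so controlling it is the real content of the no-strings assertion.
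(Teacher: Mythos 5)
The verification that \eqref{typeS} is a unit-speed $B$-curve with modulus $\mathfrak p$ and momentum $(0,0,-\sqrt c)$ is fine and matches the paper, which likewise defers to the computation of Theorem \ref{TypeL}. The reduction of closure to the single condition $\Theta_{\mathfrak p}(\omega_{\mathfrak p})=0$ (a nontrivial boost has infinite order) is also correct. The problem is the last step: you never actually prove that $\Theta_{\mathfrak p}(\omega_{\mathfrak p})\neq 0$. Your ``easy case'' $e_1\le -2\lambda$ is vacuous: for $\mathfrak p\in\mathcal S$ one has $Q_{\mathfrak p}(-2\lambda)=-1-16c\lambda^2<0$, and since $Q_{\mathfrak p}<0$ exactly on $(e_2,e_1)$ among the relevant values, together with $e_2+2\lambda<0$ (from $e_2^2+2\lambda e_2+1<0$) this forces $-2\lambda\in(e_2,e_1)$ always. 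So the entire burden falls on your ``delicate case'', where you only announce that you ``would establish the resulting strict inequality'' among ${\rm K}$, ${\rm E}$ and $\Pi$. That inequality is precisely the content of the theorem and is not supplied; declaring it the crux does not discharge it. It is also a strictly stronger claim than needed (you aim at $\Theta(\omega)<0$ rather than $\Theta(\omega)\neq 0$), and there is no indication that the third-kind contribution can be controlled termwise.

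The paper avoids the sign computation entirely by a topological argument. Assuming $\Theta(\omega)=0$, the parity of $f(s)=\mu^2(\mu+2\lambda)/(1+4c\mu^2)$ gives $\Theta(\omega)=2\Theta(\omega/2)$, hence $\Theta(\omega/2)=0$ as well. Since $e_2<-2\lambda<e_1$ and $\mu$ is monotone on each half-period, $\mu+2\lambda$ changes sign exactly once on $(0,\omega/2)$ and once on $(\omega/2,\omega)$, so $\Theta$ is negative on $(0,\omega/2)$ and positive on $(\omega/2,\omega)$. This places the two half-arcs in the open half-spaces $\{x^2<0\}$ and $\{x^2>0\}$ respectively, so the putative closed curve is simple with exactly the two vertices $\gamma(0)$ and $\gamma(\omega/2)$, contradicting the hyperbolic Four Vertex Theorem (Theorem \ref{fourvertices}). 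If you want to complete your route instead, you must actually prove the elliptic-integral inequality; as written, the argument has a genuine gap at its decisive step.
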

\begin{proof} The first part of the proof is analogous to the first part of Theorem \ref{TypeL}. For the sake of brevity, we omit it here.
	
We prove that there are no BS-strings. As customary in our proofs, we avoid explicitly writing the dependence upon $\mathfrak{p}$. By contradiction suppose that $\gamma$ is periodic.  From (\ref{typeS}) it follows that $\Theta(\omega)=0$ and that 
$\omega$ is the least period of the curvature $\kappa$, ie. the wavelength. In this case, this implies that $\omega$ is also the least period of $\gamma$.  Since
$$f(s)=\frac{\mu^2(\mu+2\lambda)}{1+4c\mu^2}$$
is even and periodic with period $\omega$,  the function $\Theta$ is odd,  periodic with period $\omega$, and such that
$$\Theta(\omega)= 2\sqrt{c} \int_0^{\omega} f(s)ds = 2\sqrt{c} \int_{-\omega/2}^{\omega/2} f(s)ds =4\sqrt{c} \int_{0}^{\omega/2} f(s)ds = 2\Theta(\omega/2)\,.$$
Then, $\Theta(\omega/2)=0$.  The function $\mu$ is strictly increasing on $(0,\omega/2)$ and strictly decreasing on $(\omega/2,\omega)$.  Then, 
$\Gamma_1=\gamma([0,\omega/2])$ and $\Gamma_2=\gamma([\omega/2,\omega])$ are simple closed arcs with distinct boundary points 
\[\begin{split}\vec{p}_0&=\gamma(0)=  \frac{1}{2\sqrt{2}\,e_2}\left(\sqrt{1+4c e_2^2},0,1\right),\\
 \vec{p}_2&= \gamma(\omega/2)=  \frac{1}{2\sqrt{2}\,e_1}\left(\sqrt{1+4 c e_1^2},0,1\right),\end{split}\]
such that $\gamma(\R)=\Gamma_1\cup \Gamma_2$. Taking into account that  $\mu(0)=e_2<-2\lambda<e_1=\mu(\omega/2)$ and recalling that  $\mu$ is strictly increasing on $(0,\omega/2)$ and strictly decreasing on $(\omega/2,\omega)$,  the equation $\mu+2\lambda=0$ possesses exactly one solution $s'$ in the interval $(0,\omega/2)$ and exactly one solution $s''$ in the interval $(\omega/2,\omega)$.  Thus,
$$\begin{cases} \mu(s)+2\lambda < 0,\quad  s\in [0,s'),\\
\mu(s)+2\lambda > 0,\quad  s\in (s',s''),\\
 \mu(s)+2\lambda < 0,\quad  s\in (s'',\omega]\,.
\end{cases}
$$
This implies that $\Theta$ is strictly decreasing on $(0,s')$, strictly increasing on $(s',s'')$ and strictly decreasing on $(s'',\omega)$.  Since $\Theta(0)=\Theta(\omega/2)=\Theta(\omega)=0$,  we have
$$\Theta(s)<0,\,\,  \forall s\in (0,\omega/2),\quad \Theta(s)>0,\,\,  \forall s\in (\omega/2,\omega).$$
Therefore,  
$$\Gamma_1\setminus\{\vec{p}_1,\vec{p}_2\}\subset \{\vec{x}\in \R^{1,2}\,/\, x^2<0\},\quad \Gamma_2\setminus\{\vec{p}_1,\vec{p}_2\}\subset \{\vec{x}\in \R^{1,2}\,/\, x^2>0\}.$$  
Hence $\gamma$ is a simple closed curve with two vertices, namely, $\gamma(0)$ and $\gamma(\omega/2)$. This conclusion contradicts the Four Vertex Theorem (Theorem \ref{fourvertices}). Consequently, $\gamma$ cannot be closed.
\end{proof}

\begin{defn} The BS-curve $\gamma_\mathfrak{p}$ given by \eqref{typeS} is referred to as the \emph{standard BS-curve} with modulus $\mathfrak{p}\in\mathcal{S}$.
\end{defn}

\begin{remark} \emph{Every BS-curve with modulus  ${\mathfrak p}\in {\mathcal S}$ is equivalent to $\gamma_{{\mathfrak p}}$, \eqref{typeS}. From now on we implicitly assume that the BS-curves in consideration are in their standard form.}
\end{remark}

Let ${\mathfrak p}=(\lambda,e_2)\in {\mathcal S}$ and $\gamma_\mathfrak{p}$ be the standard BS-curve with modulus $\mathfrak p$. Resorting to the  Poincar\'e model, the parameterization of $\gamma_\mathfrak{p}$ is
$$\gamma_\mathfrak{p}=\frac{1}{2\sqrt{c}\,\mu_\mathfrak{p}+\sqrt{1+4c\mu_\mathfrak{p}^2}\,\cosh(\Theta_\mathfrak{p})}\left(\sqrt{1+4 c \mu_\mathfrak{p}^2}\,\sinh(\Theta_\mathfrak{p}),1 \right).
$$
The stabilizer of the momentum is the subgroup
$$\O(1,1)=\left\{{\rm HR}(t) =  \begin{pmatrix}
\cosh(t)&\sinh(t)&0\\
\sinh(t)&\cosh(t)&0\\
0&0&1
\end{pmatrix}\,/\, t\in \R \right\}\,.$$
The monodromy  ${\mathfrak m}$ is the non-trivial element ${\rm HR}(\Theta_\mathfrak{p}(\omega_\mathfrak{p}))$ of $\O(1,1)$.   Let
$\Gamma=\gamma_\mathfrak{p}([0,\omega_\mathfrak{p}])$ be the fundamental arc,  then $|[\gamma_\mathfrak{p}]|=\cup_{n\in \Z}{\mathfrak m}^n(\Gamma)$.  Since the curvature is an even function,  
$|[\gamma_\mathfrak{p}]|$ and $\Gamma$ are invariant by the reflection with respect to the Oy-axis. The segment $\{(0,v)\,/\, -1<v<1\}$ is a slice for the action of $\O(1,1)$ on ${\rm D}^2$.  The orbit ${\mathcal O}_{v}$  through $(0,v)$  is the intersection of  ${\rm D}^2$ with the circle passing through $(0,v)$, ${\rm p}_{+}=(1,0)$ and ${\rm p}_{-}=(-1,0)$.  Denote by $\mathcal O^-$ and by $\mathcal O^+$ the orbits of $\O(1,1)$  through $\gamma_\mathfrak{p}(0)$ and $\gamma_\mathfrak{p}(\omega_\mathfrak{p}/2)$ respectively,  referred to as the \emph{lower and upper osculating circular arcs} of $\gamma_\mathfrak{p}$.   The trajectory $|[\gamma_\mathfrak{p}]|$ is contained in the lunular region of ${\rm D}^2$ bounded by $\mathcal O^+$ and $\mathcal O^-$;  is tangent to $\mathcal O^+$ at $\gamma_\mathfrak{p}(n\omega_\mathfrak{p})$, $n\in \Z$,  and to  $\mathcal O^-$ at $\gamma_\mathfrak{p}(\omega_\mathfrak{p}/2+n\omega_\mathfrak{p})$, $n\in \Z$,  with limit points
$$\lim_{s\to +\infty} \gamma_\mathfrak{p}(s) = {\rm p}_{+}\,,\quad \quad \lim_{s\to- \infty} \gamma_\mathfrak{p}(s) = {\rm p}_{-}\,.$$

Choose $\lambda\in (-\infty,-1)$.  Put ${\rm L}_{\lambda}=(\eta_-(\lambda),b_0(\lambda))$ and consider the $1$-parameter family 
$\{\gamma_\mathfrak{p}\}_{e_2\in {\rm L}_{\lambda}}$ of BS-curves with multiplier $\lambda$. Recall that the graph of the function $\eta_-$ \eqref{eta} is the lower boundary of the moduli space $\mathcal{S}$, while the graph of $b_0$ \eqref{br1} is the upper boundary. The function
$$\upsilon_{\lambda}^+:e_2\in {\rm L}_{\lambda}\longmapsto   \frac{1}{2\sqrt{c\,}e_2+\sqrt{1+4ce_2^2}}\in \R\,,$$ 
where $c=c(\lambda,e_2)>0$ is as in (vi) of \eqref{rele1e2Lc}, is convex, attains the minimum at $e_2=-\lambda$, and satisfies
 $$\lim_{e_2\to \eta_-(\lambda)^+} \upsilon_{\lambda}(e_2)=\upsilon_{\lambda}^*\,,\quad\quad  \lim_{e_2\to b_0(\lambda)^+} \upsilon_{\lambda}(e_2)=1\,,$$
where
$$
\upsilon_{\lambda}^*= \frac{1}{2\sqrt{c(\lambda,\eta_-(\lambda))}\,\eta_-(\lambda)+\sqrt{1+4c(\lambda,\eta_-(\lambda))\eta_-(\lambda)^2}}<1\,.
$$
Let  $e^*_2(\lambda)$ be the unique element of ${\rm L}_{\lambda}$  such that $\upsilon^+_{\lambda}(e^*_{2}(\lambda))=\upsilon_{\lambda}^*$.

We are now in a position to describe the main features of the kinematics of the $1$-parameter family $\{\gamma_\mathfrak{p}\}_{e_2\in {\rm L}_{\lambda}}$. When $e_2$ varies in the interval ${\rm L}_{\lambda}$ the curves of the family tend to two asymptotic positions and evolve in five intermediate stages with contracting and expanding phases:
\begin{itemize}
\item When $e_2\to \eta_-(\lambda)^{+}$,  the osculating arc ${\mathcal O}^+$  and $|[\gamma_\mathfrak{p}]|$ tend to the asymptotic position   ${\mathcal O}_{\upsilon^*_{\lambda}}$.
\item When $e_2\in (\eta_-(\lambda), -\lambda)$,  the radius of circular arc ${\mathcal O}^+$ increases (ie.  ${\mathcal O}^+$ and $|[\gamma_\mathfrak{p}]|$ deflect). The evolution of the curve is in a contracting phase.
\item When $e_2=-\lambda$  the radius of the osculating circular arc ${\mathcal O}^+$ assume its maximum value.
\item When $e_2\in ( -\lambda,e^*_2(\lambda))$,  the radius of ${\mathcal O}^+$ decreases (ie.  ${\mathcal O}^+$ and $|[\gamma_\mathfrak{p}]|$ inflect). The evolution of the curve is in an expanding phase.
\item When $e_2=e^*_2(\lambda)$,  the osculating arc ${\mathcal O}^+$  returns to the limit position ${\mathcal O}_{\upsilon^*_{\lambda}}$.
\item When $e_2\in ( e^*_2(\lambda),\eta_-(\lambda))$,  the radius of ${\mathcal O}^+$ decreases (ie.  ${\mathcal O}^+$ and $|[\gamma_\mathfrak{p}]|$ inflect). The evolution of the curve is in an expanding phase.
\item When $e_2\to b_0(\lambda)^-$,   $|[\gamma_\mathfrak{p}]|$ and its osculating arc ${\mathcal O}^+$ tend to the upper hemicycle $\partial {\rm D}^2\cap \{(u,v)\in \R^2\,/\, v>0\}$ of the ideal boundary.
\end{itemize}
Figure \ref{FIG14} represents all stages of the evolution of the BS-curves of the family $\{\gamma_\mathfrak{p}\}_{e_2\in {\rm L}_{\lambda}}$, for the multiplier $\lambda=-1.1$. The trajectory of the BS-curves are colored in black, the upper osculating arcs $\mathcal{O}^+$ in red and the lower ones $\mathcal{O}^-$ in blue. The asymptotic limit position ${\mathcal O}_{\upsilon^*_{\lambda}}$ is the purple dashed circular arc and the asymptotic limit position as $e_2\to b_0(\lambda)^-$ is the dashed dark-red upper hemicycle of the ideal boundary. In the first two images ($e_2=0.92$ and $e_2=1$, respectively) two BS-curves in their contracting phases are reproduced. In the third image we show the BS-curve when it reaches its maximally contracted position and the radius of the upper osculating arc $\mathcal{O}^+$ reaches its maximum ($e_2=-\lambda=1.1$). A subsequent BS-curve in its expanding phase is shown in the fourth image ($e_2=1.2$). The first image of the second row ($e_2=e_2^*(\lambda)\approx 1.28$) reproduces the BS-curve when the upper osculating arc $\mathcal{O}^+$ returns to the asymptotic position ${\mathcal O}_{\upsilon^*_{\lambda}}$. The last three images ($e_2=1.54, 1.55$ and $1.56$) depict BS-curves in their expanding phases evolving toward the maximally expanded asymptotic limit (the upper hemicycle of the ideal boundary).
 
\begin{figure}[h]
	\makebox[\textwidth][c]{
		\includegraphics[height=4cm,width=4cm]{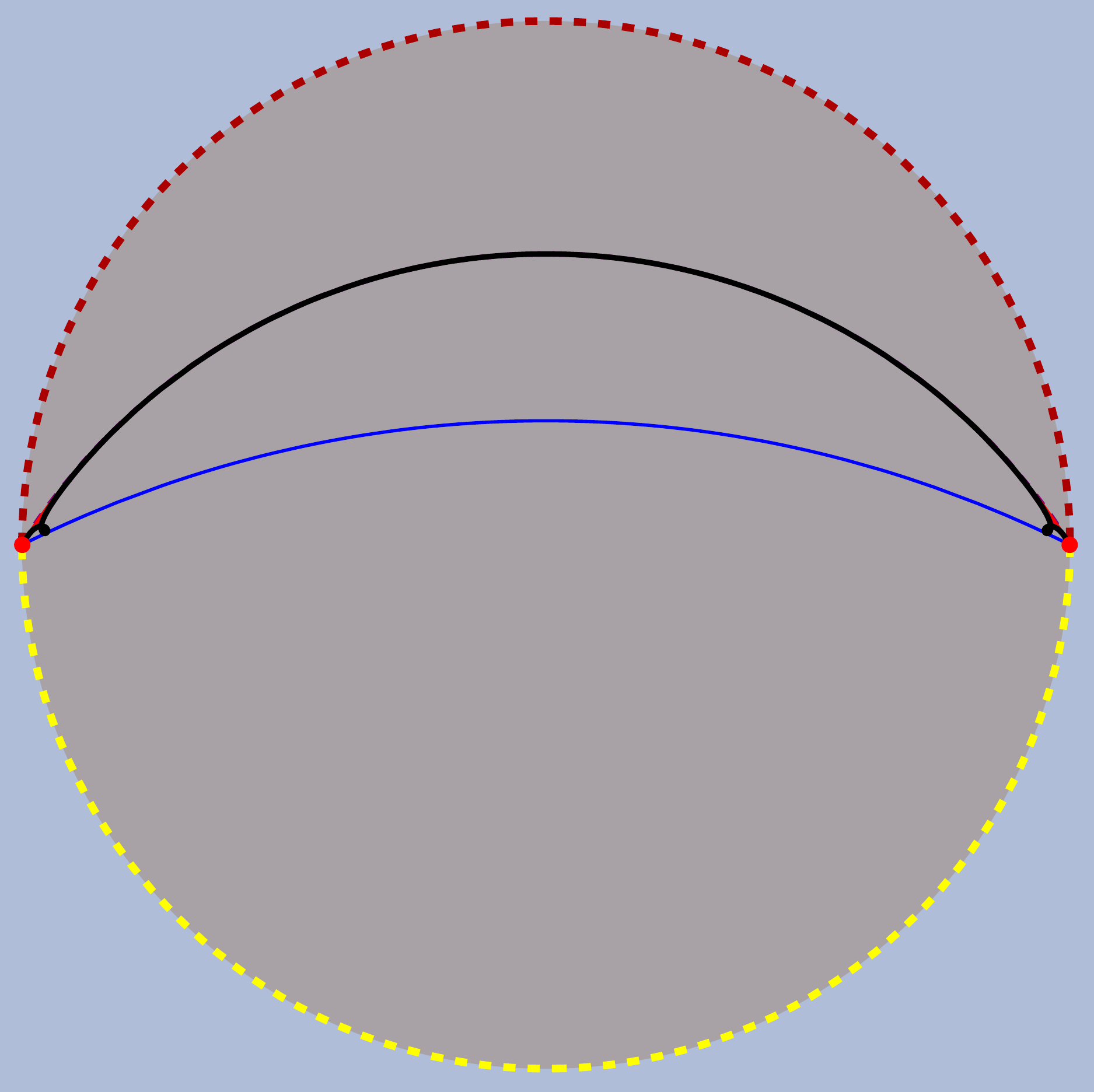}\quad
		\includegraphics[height=4cm,width=4cm]{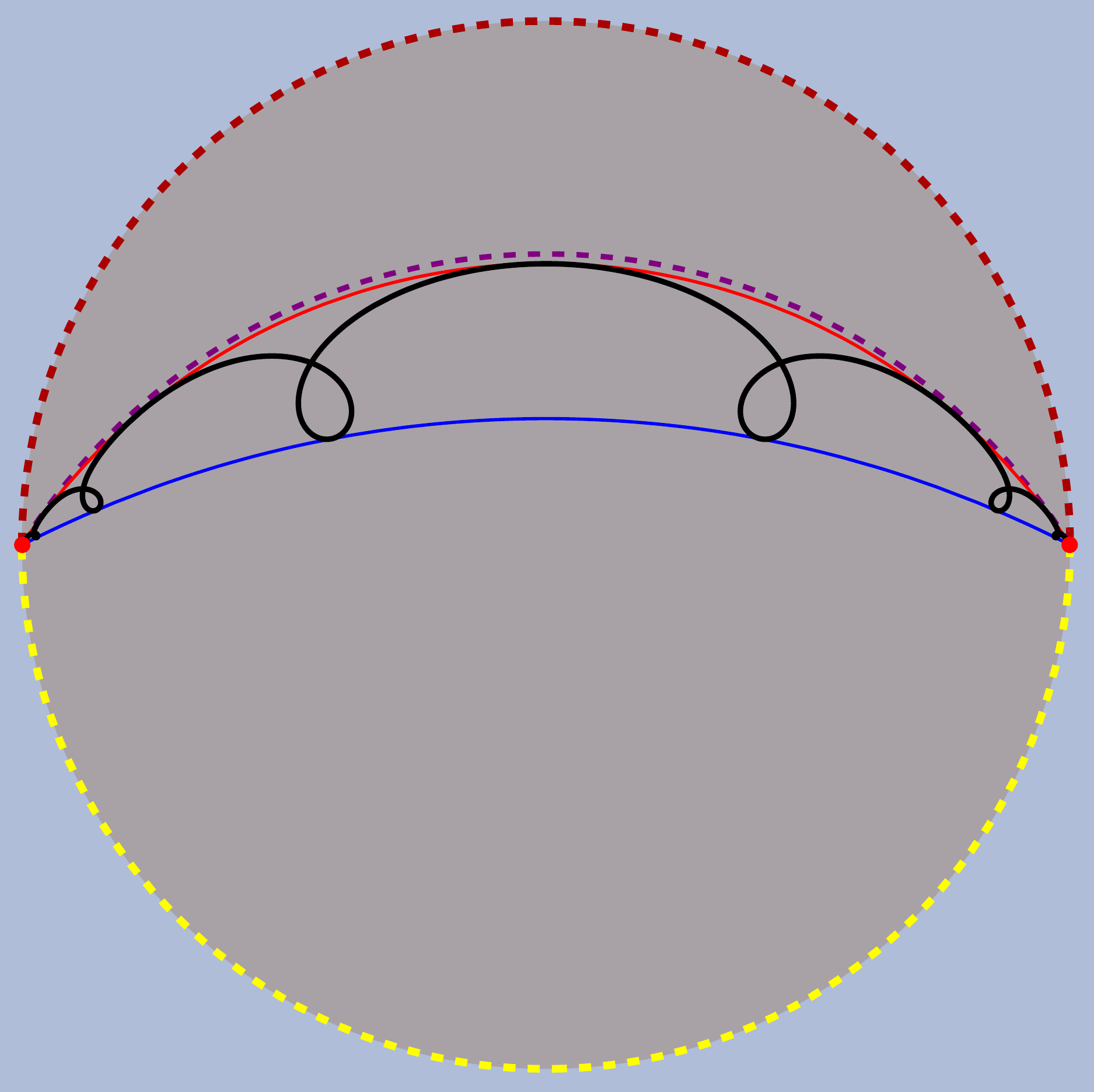}\quad
		\includegraphics[height=4cm,width=4cm]{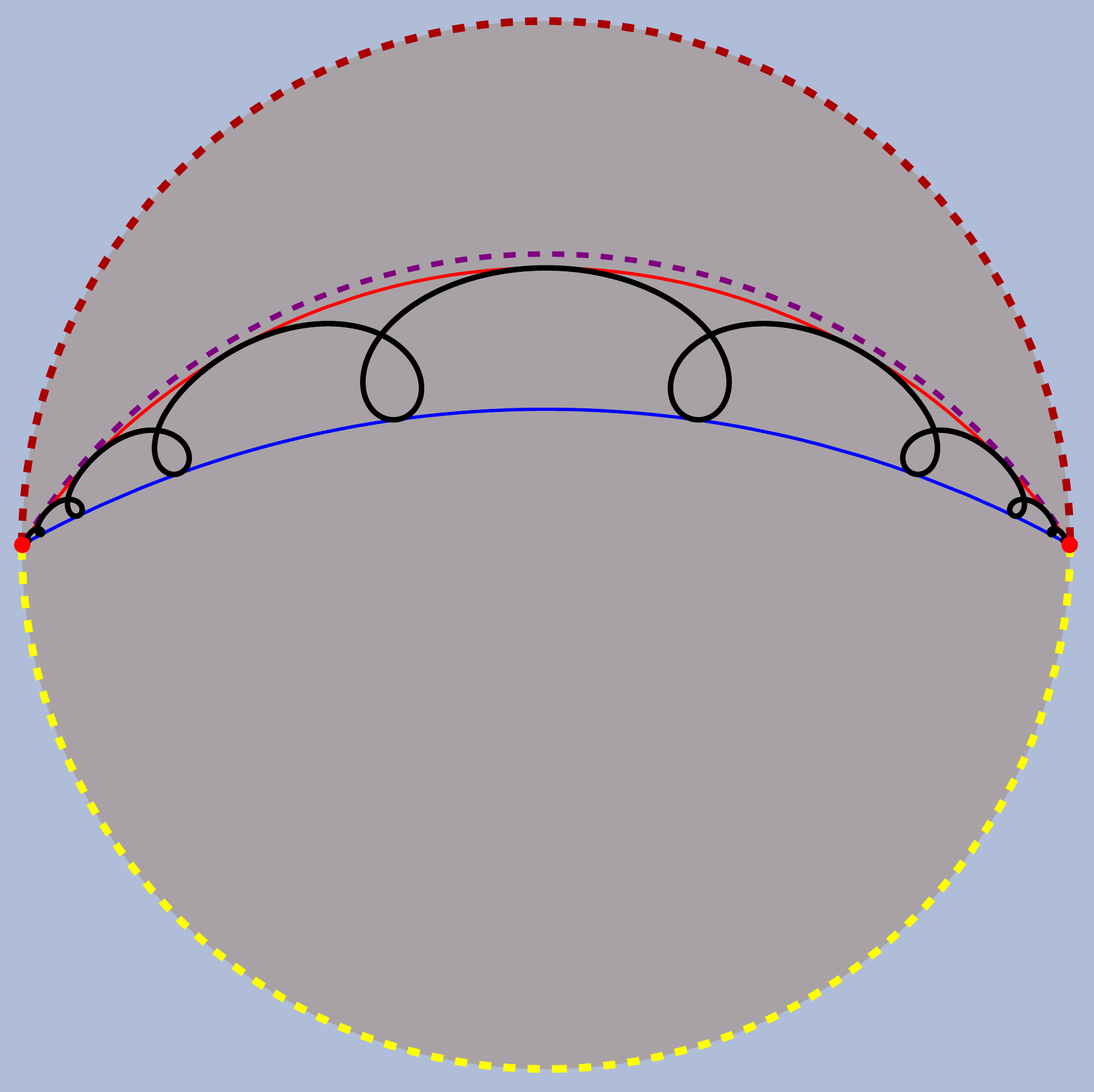}\quad
		\includegraphics[height=4cm,width=4cm]{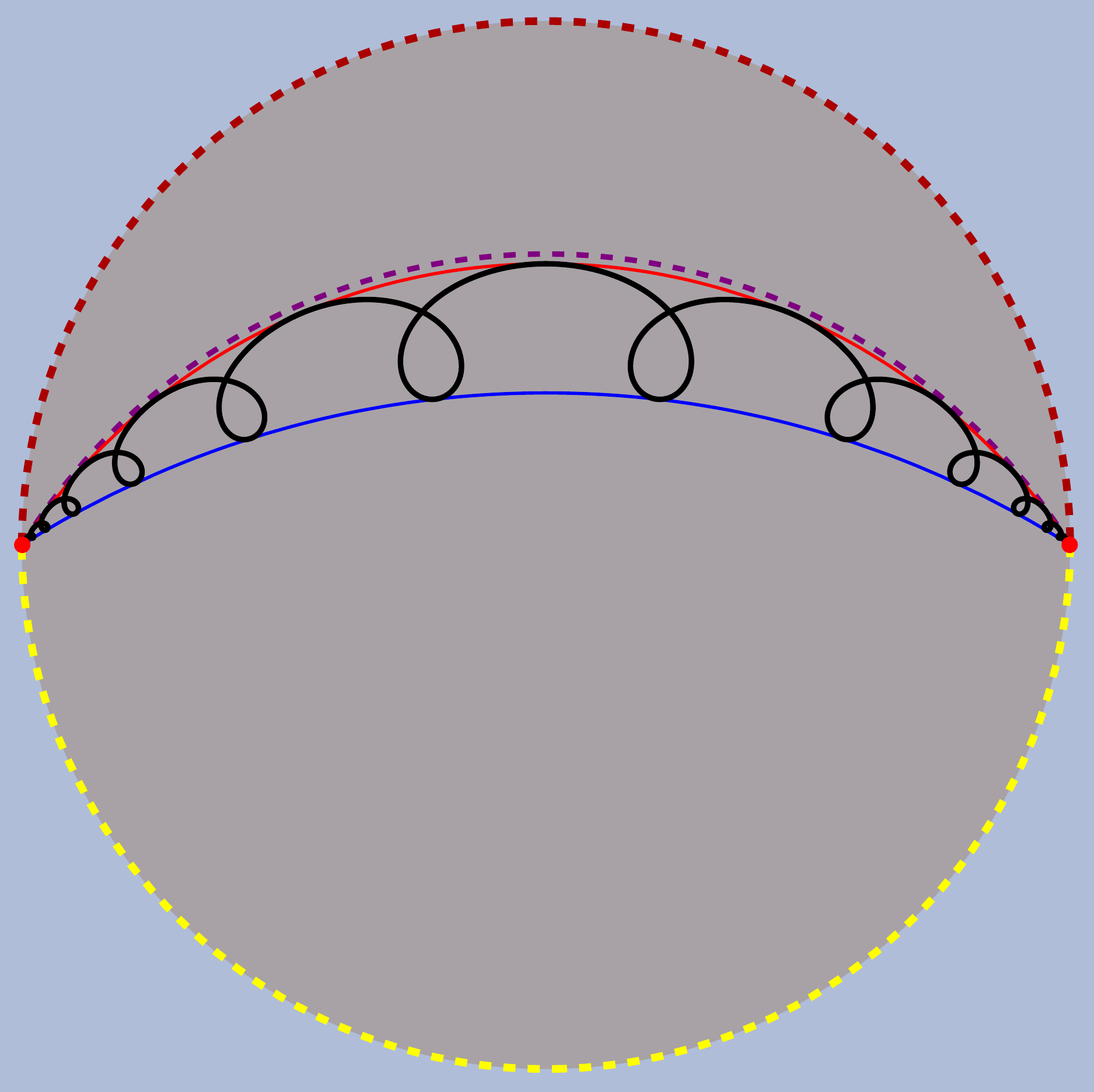}}\\\vspace{0.3cm}
	\makebox[\textwidth][c]{
	\hspace{0.01cm}	\includegraphics[height=4cm,width=4cm]{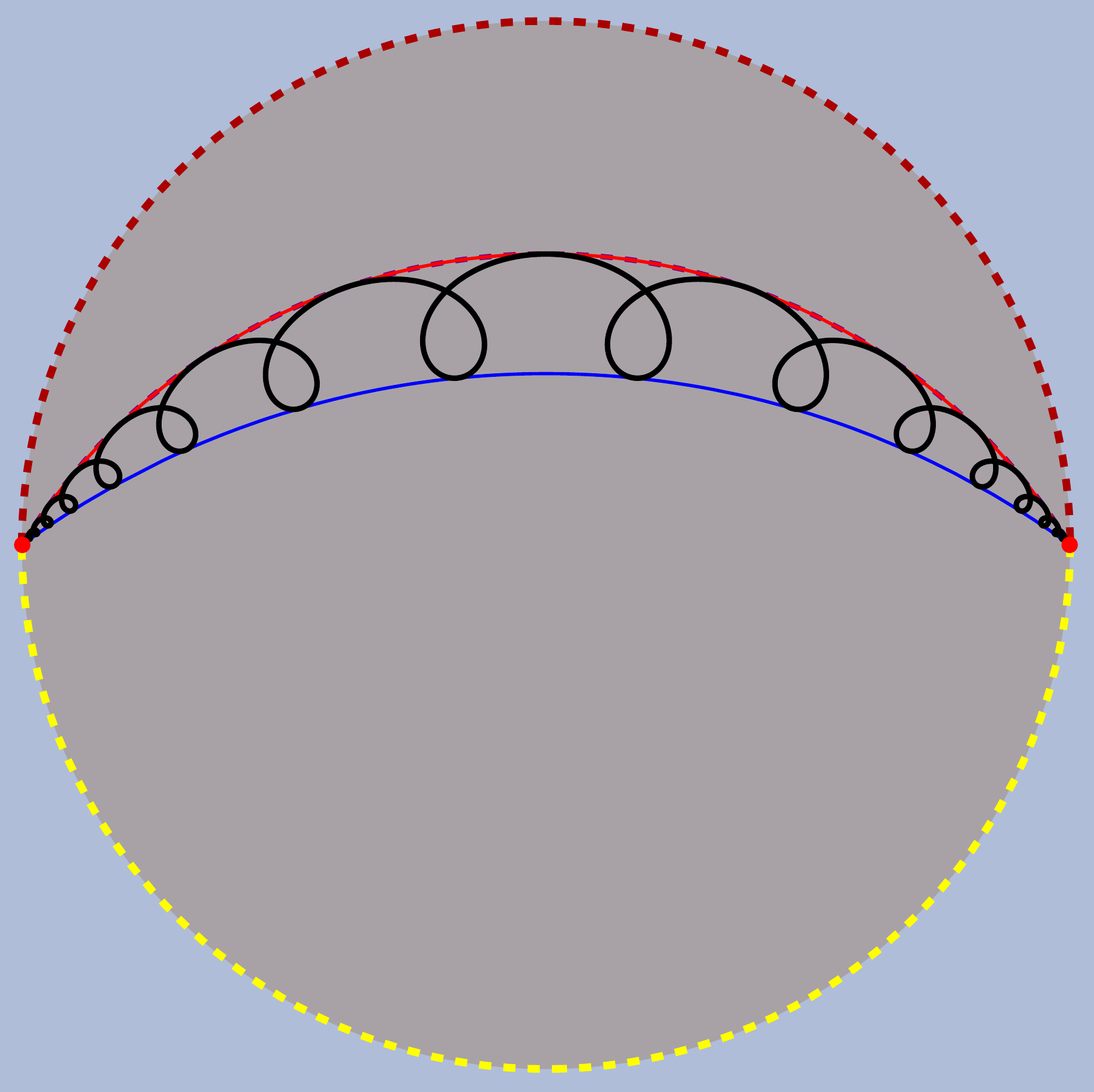}\quad
		\includegraphics[height=4cm,width=4cm]{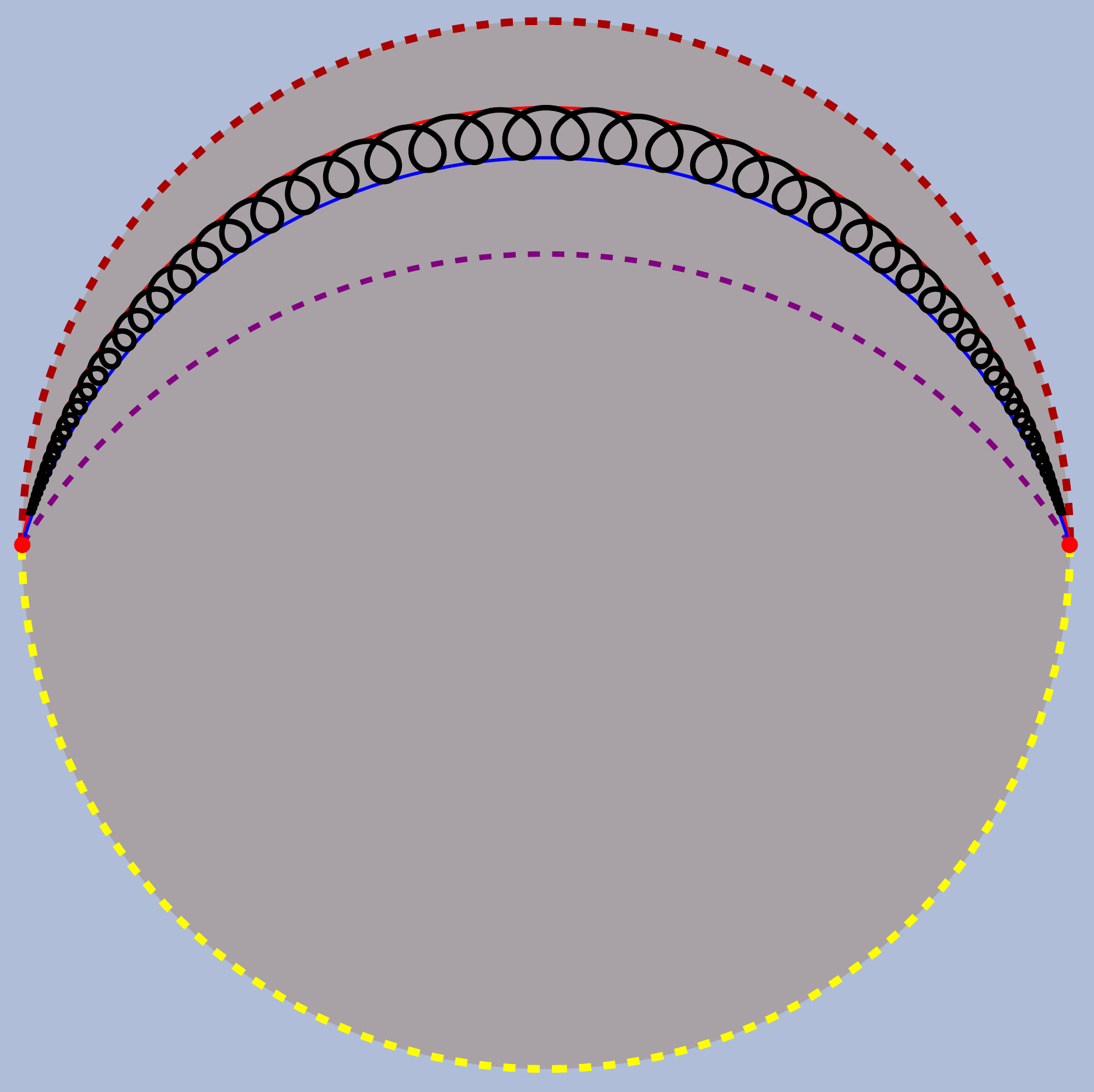}\quad
		\includegraphics[height=4cm,width=4cm]{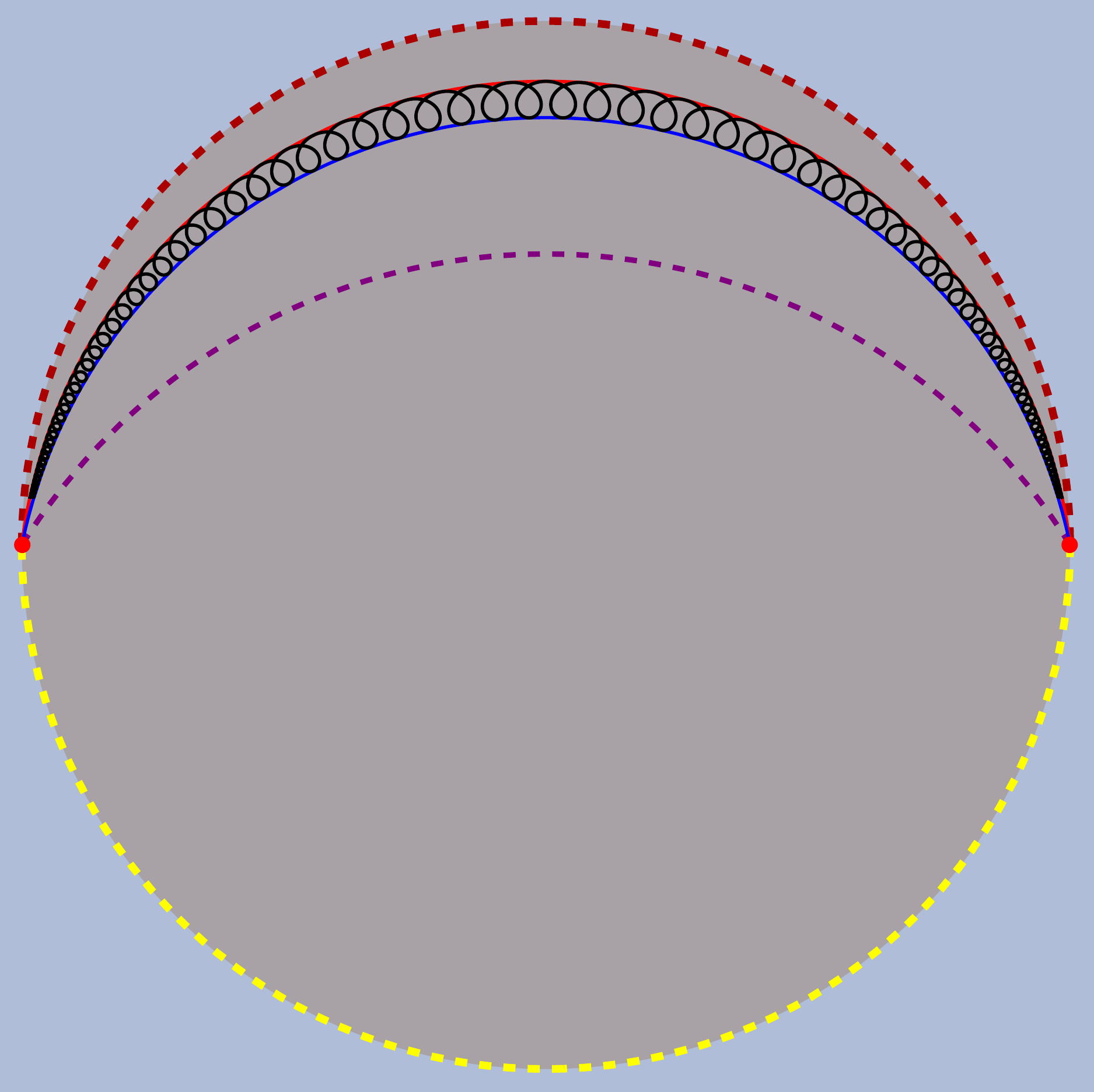}\quad
		\includegraphics[height=4cm,width=4cm]{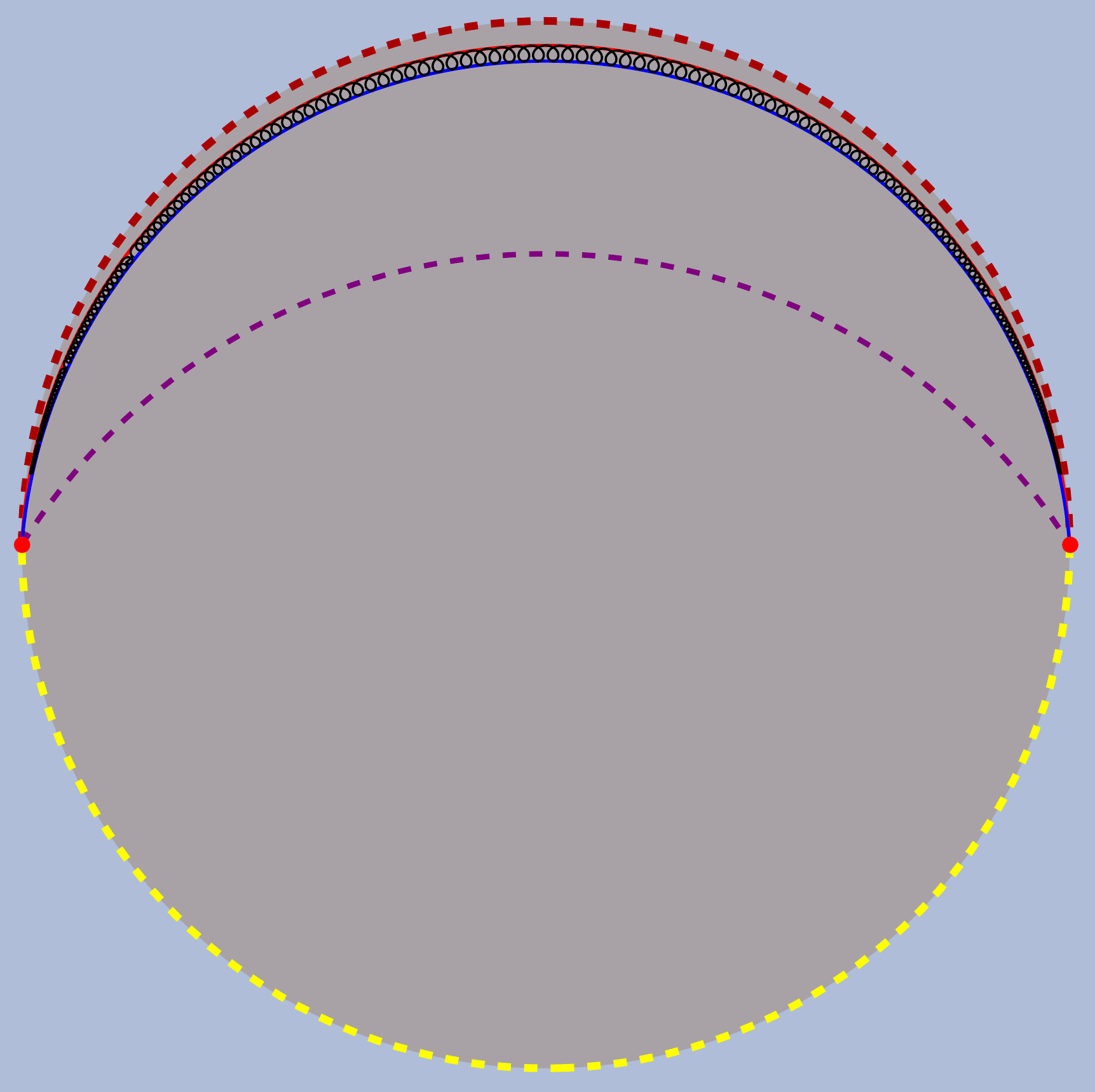}
	}
	\caption{\small{Trajectories of BS-curves with multiplier $\lambda=-1.1$ and increasing values of $e_2\in {\rm L}_\lambda$.
	}}\label{FIG14}
\end{figure}	
 
\subsection{BT-Curves}\label{OTT} 

Let $\mathfrak{p}=(\lambda,e_2)\in\mathcal{T}$. The B-curves associated with $\mathfrak{p}\in\mathcal{T}$ have time-like momentum. Their Blaschke invariant $\mu_\mathfrak{p}$ is the solution of \eqref{ode} with $c<0$ and $\mu_\mathfrak{p}(0)=e_2$. This case is analogous to the spherical one. Thus, we proceed as in \cite{MP} by introducing the exceptional locus and by defining the radial and angular functions, whose behavior will be essential in the description of the BT-curves.

\begin{defn} The curve ${\mathcal E}=\{(\lambda,e_2)\in {\mathfrak P}\,/\,  1+4 c e_1^2=0\}$ contained in the moduli space ${\mathfrak P}$ is the \emph{exceptional locus}.
\end{defn}

\begin{remark}\label{lemma5Modulispaces} \emph{The exceptional locus ${\mathcal E}$ is characterized by each one of the following equations:
		\begin{equation}\label{eqexlc}
			\begin{cases} 
				(i)\,  \,\, &e_1+2\lambda=0\,,\\
				(ii) \,\,\,& 1+16c\lambda^2=0\,,\\
				(iii)\, \,\,  & e_1^2e_2^3-e_1^3e_2^2+e_1+e_2=0\,,
			\end{cases}
		\end{equation}
		or as the graph of the function $c:(-\infty,-\sqrt[4]{\varphi^5}/2)\to   (\sqrt[4]{\varphi},+\infty)$ defined by
		\begin{equation}\label{c}c(\lambda)=-\frac{2}{3}\lambda+\frac{1}{3 \lambda^2}{\mathfrak R}\left(\sqrt[3]{-69\lambda^9+72\lambda^5-3i\lambda^3\sqrt{768\lambda^8+528\lambda^4+3}}\right),\end{equation}
		where $\varphi=(1+\sqrt{5})/2$ is the golden ratio. See Figures \ref{FIG4} and \ref{FIG20Moduli}.}
\end{remark}

Let $\eta_{\pm}$ and $c$ be as in  (\ref{eta}) and (\ref{c}), respectively.  Define  $a: (-\infty, -2/\sqrt[4]{27})\to \R$ by
$$a(\lambda)=\begin{cases}\sqrt{\lambda^2-1}-\lambda\,,\quad\quad  \lambda \le -1\,,\\
\eta_-(\lambda)\,,\quad\quad\quad\quad\,\,\,\, -1<\lambda < -2/\sqrt[4]{27}\,.
\end{cases} 
$$
Then, $a(\lambda)<\eta_+(\lambda)$, for all $\lambda<-2/\sqrt[4]{27}$, and $a(\lambda)<c(\lambda)<\eta_{+}(\lambda)$, for all $\lambda<-\sqrt[4]{\varphi^5}/2$. We decompose ${\mathcal T}$ as the disjoint union ${\mathcal T}={\mathcal T}_-\cup {\mathcal E}\cup  {\mathcal T}_+$
of the exceptional locus ${\mathcal E}$ with a lower domain (the green region of Figure \ref{FIG20Moduli}),
\[{\mathcal T}_-=\{(\lambda,e_2)\in {\mathcal T}\,/\,  \lambda <-\sqrt[4]{\varphi^5}/2,\,\,  a(\lambda)<e_2<c(\lambda) \}\,,\] 
and an upper domain (the brown domain of Figure \ref{FIG20Moduli}),
\[\begin{split} {\mathcal T}_+=& \{(\lambda,e_2)\in {\mathcal T}\,/\, \lambda <-\sqrt[4]{\varphi^5}/2, c(\lambda)\le e_2<\eta_+(\lambda)\}\,\,\cup\\
&  \{(\lambda,e_2)\in {\mathcal T}\,/\,  -\sqrt[4]{\varphi^5}/2\le\lambda < -2/\sqrt[4]{27},\,  a(\lambda)<e_2<\eta_+(\lambda)\}\,.
\end{split}\]

\begin{figure}[h]
\begin{center}
\includegraphics[height=4cm,width=6cm]{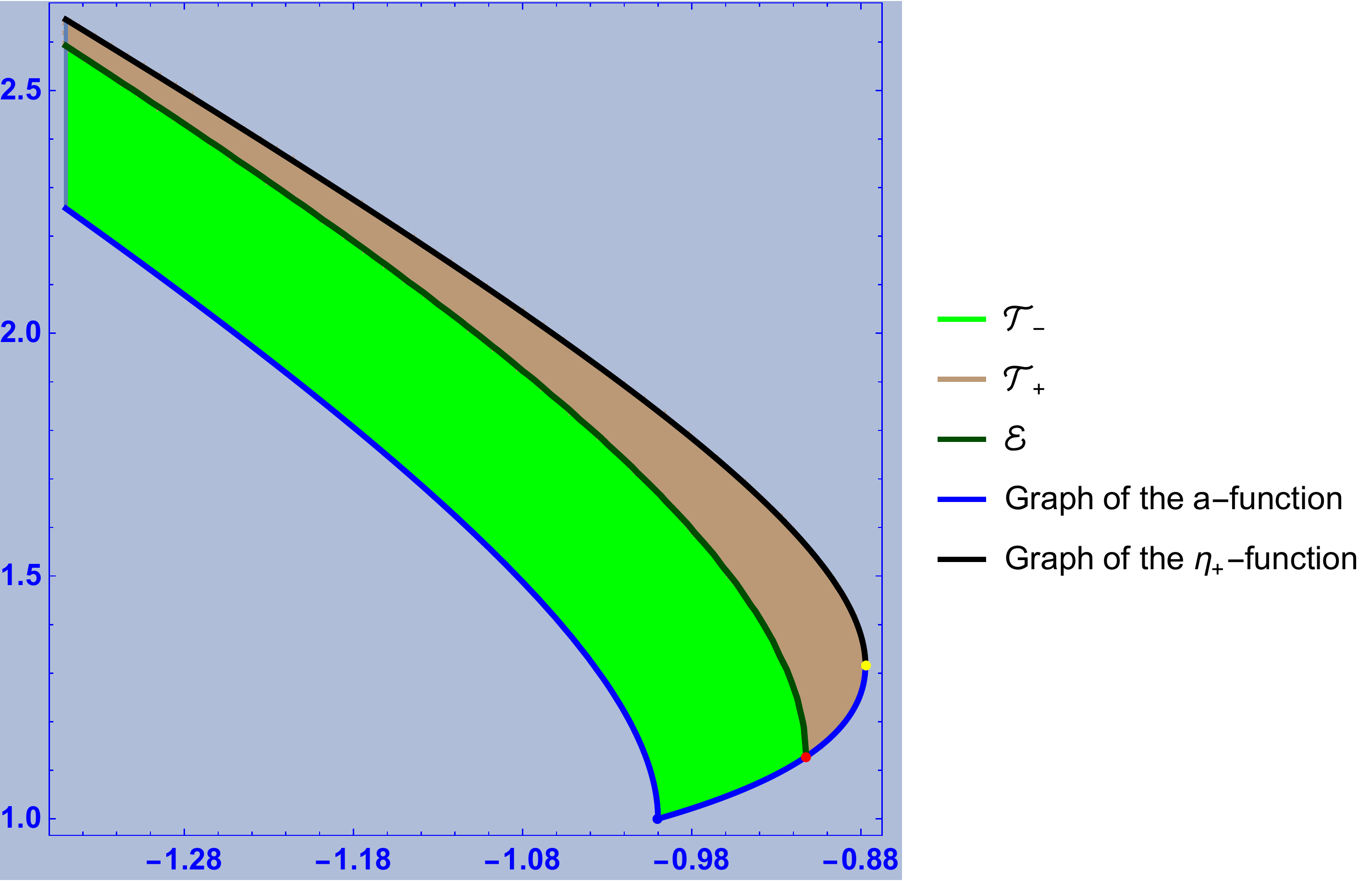}
\caption{\small{The decomposition of the moduli space $\mathcal{T}$ as ${\mathcal T}={\mathcal T}_-\cup {\mathcal E}\cup {\mathcal T}_+$.
}}\label{FIG20Moduli}
\end{center}
\end{figure}

\begin{defn} Let ${\mathfrak p}=(\lambda,e_2)\in {\mathcal T}$ and $\mu_{{\mathfrak p}}$ be the solution of (\ref{ode}) with initial condition $\mu_\mathfrak{p}(0)=e_2$ where $c<0$ is as in (vi) of \eqref{rele1e2Lc}. The \emph{radial function} $\varrho_{{\mathfrak p}}$ is defined by:
\begin{itemize}
\item If ${\mathfrak p}\notin {\mathcal E}$,
$$\varrho_{{\mathfrak p}}= \frac{\sqrt{1+4c\mu_{{\mathfrak p}}^2}\,}{2\sqrt{|c|}\,\mu_{{\mathfrak p}}}\,.$$
\item  If ${\mathfrak p} \in {\mathcal E}$,  
$$\varrho_{{\mathfrak p}}= \begin{cases} \,\,\,\, \frac{\sqrt{1+4c\mu_{{\mathfrak p}}^2}\,}{2\sqrt{|c|}\mu_{{\mathfrak p}}}\,,
	\quad\quad {\rm on}\,\,\,  [-\frac{\omega_{{\mathfrak p}}}{2}\,, \frac{\omega_{{\mathfrak p}}}{2}],\\
	-\frac{\sqrt{1+4c\mu_{{\mathfrak p}}^2}\,}{2\sqrt{|c|}\mu_{{\mathfrak p}}}\,, \quad\quad  {\rm on}\,\,\,  
	[\frac{\omega_{{\mathfrak p}}}{2}, \frac{3\omega_{{\mathfrak p}}}{2}]\,.
\end{cases}$$
\end{itemize}
\end{defn}

\begin{figure}[h]
	\begin{center}
		\includegraphics[height=4cm,width=6cm]{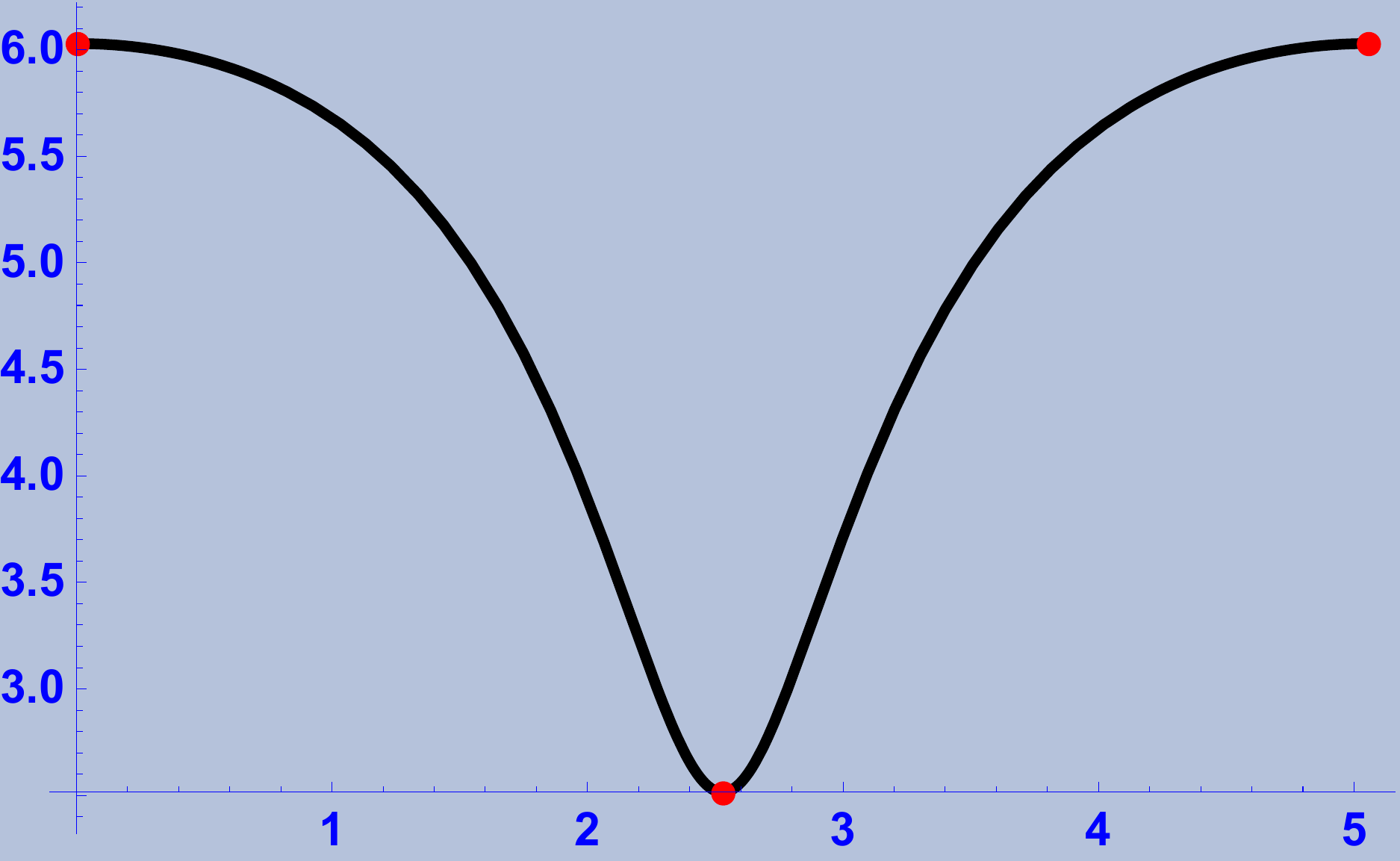}\quad\,\,\,
		\includegraphics[height=4cm,width=6cm]{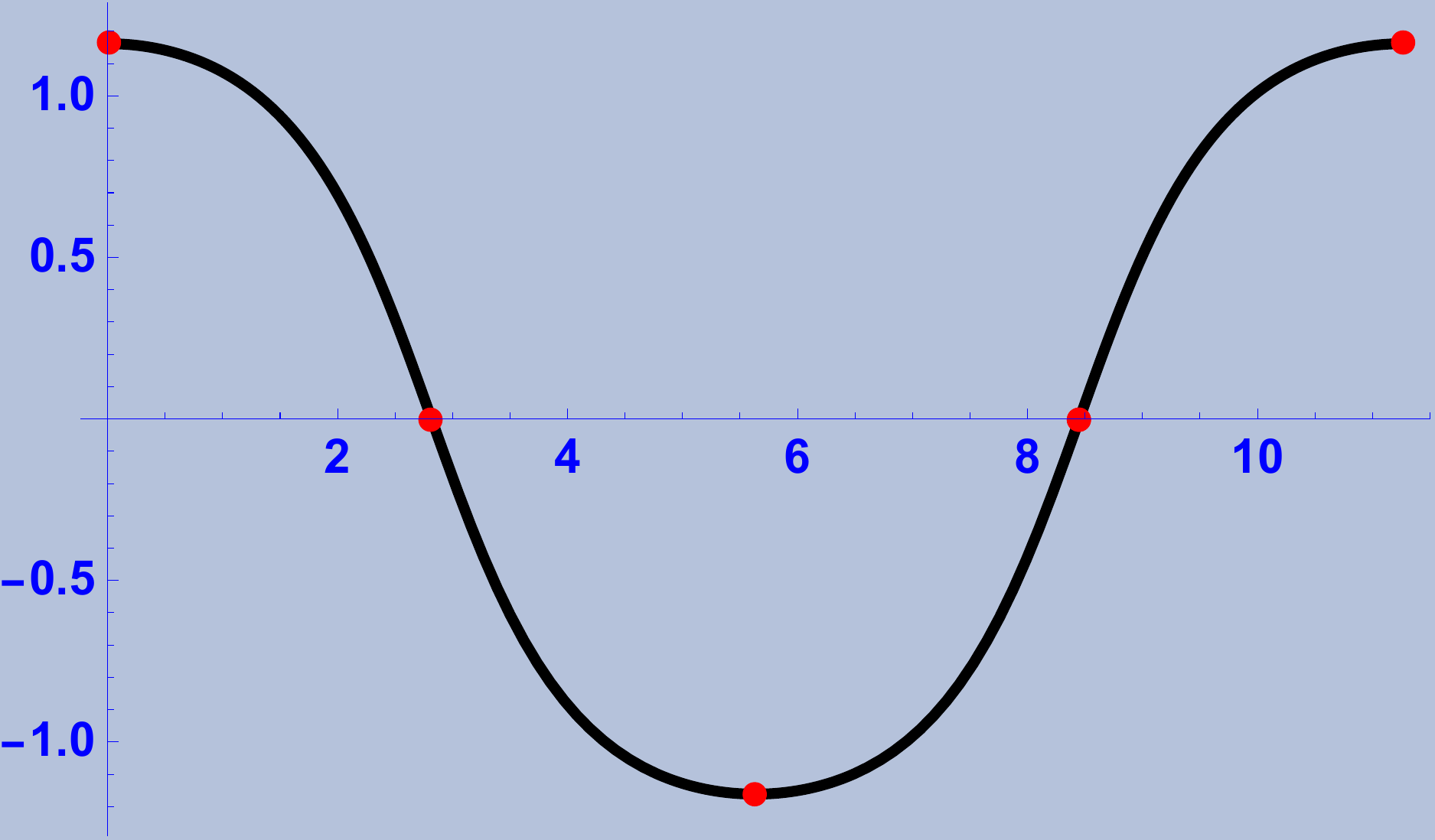}
		\caption{\small{The graphs of the radial functions $\varrho_{\mathfrak p}$ with ${\mathfrak p}=(\lambda,e_2)\in\mathcal{T}$. Left: Non-exceptional case $\lambda=-0.99$ and $e_2=1.05$ represented from $0$ to the least period $\omega_{\mathfrak p}$. Right: Exceptional case $\lambda=-0.91$ and $e_2=1.19$ represented from $0$ to the least period $2\omega_\mathfrak{p}$.
		}}\label{FIG19}
	\end{center}
\end{figure}

We now describe the main features of the radial functions (see Figure \ref{FIG19}):
\begin{itemize}
\item The radial function $\varrho_{{\mathfrak p} }$ is real-analytic, even and periodic,  for every ${\mathfrak p} \in {\mathcal T}$.  
\item If ${\mathfrak p} \notin  {\mathcal E}$,  $\varrho_{{\mathfrak p}}>0$ with least period  $\omega_{{\mathfrak p}}$.  It attains the maximum at
$h\omega_{{\mathfrak p} }$ and the minimum at $\omega_ {{\mathfrak p} }/2+h\omega_{{\mathfrak p} }$, $h\in \Z$. It is strictly decreasing on 
$[h\omega_{{\mathfrak p} }, \omega_{{\mathfrak p} }/2+h\omega_{\mathfrak p}]$ and strictly increasing on $[ \omega_{{\mathfrak p} }/2+h\omega_{{\mathfrak p} }, (h+1)\omega_{{\mathfrak p} }]$, $h\in \Z$.
\item If ${\mathfrak p} \in  {\mathcal E}$,  $\varrho_{{\mathfrak p}}$ is periodic, with least period $2\omega_{{\mathfrak p} }$. It is strictly positive on
$(\omega_{{\mathfrak p} }/2+h\omega_{{\mathfrak p} },  \omega_{{\mathfrak p} }/2+(h+1)\omega_{{\mathfrak p} })$,  $h\in \Z$ and ${\rm mod}(h,2)=1$, 
and strictly negative on $(\omega_{{\mathfrak p} }/2+h\omega_{{\mathfrak p} },  \omega_{{\mathfrak p} }/2+(h+1)\omega_{{\mathfrak p} })$, 
 $h\in \Z$ and ${\rm mod}(h,2)=0$.  It is strictly decreasing on  $[h\omega_{{\mathfrak p} }, (h+1)\omega_{{\mathfrak p} }]$, ${\rm mod}(h,2)=0$,  and strictly increasing on $[h\omega_{{\mathfrak p} }, (h+1)\omega_{{\mathfrak p} }]$, ${\rm mod}(h,2)=1$.
\end{itemize}

\begin{defn} Let ${\mathfrak p}=(\lambda,e_2)\in {\mathcal T}$ and $\mu_{{\mathfrak p}}$ be the solution of (\ref{ode}) with initial condition $\mu_\mathfrak{p}(0)=e_2$ where $c<0$ is as in (vi) of \eqref{rele1e2Lc}. The \emph{angular function} $\Theta_\mathfrak{p}$ is defined by:
\begin{itemize}
	\item If ${\mathfrak p}\notin {\mathcal E}$,
	$$\Theta_{{\mathfrak p}}(s)=2\sqrt{|c|}\int_0^s\frac{\mu_{{\mathfrak p}}^2\left(\mu_{{\mathfrak p}}+2\lambda\right)}{1+4 c \mu_{{\mathfrak p}}^2}\,ds\,.$$
	\item  If ${\mathfrak p} \in {\mathcal E}$,  
	$$\Theta_{{\mathfrak p}}(s)=-8\sqrt{|c|}\,\lambda^2\int_0^s\frac{\mu_{{\mathfrak p}}^2}{\mu_{{\mathfrak p}}-2\lambda}\,ds\,.$$
\end{itemize}
\end{defn}

\begin{figure}[h]
	\begin{center}
		\includegraphics[height=4cm,width=6cm]{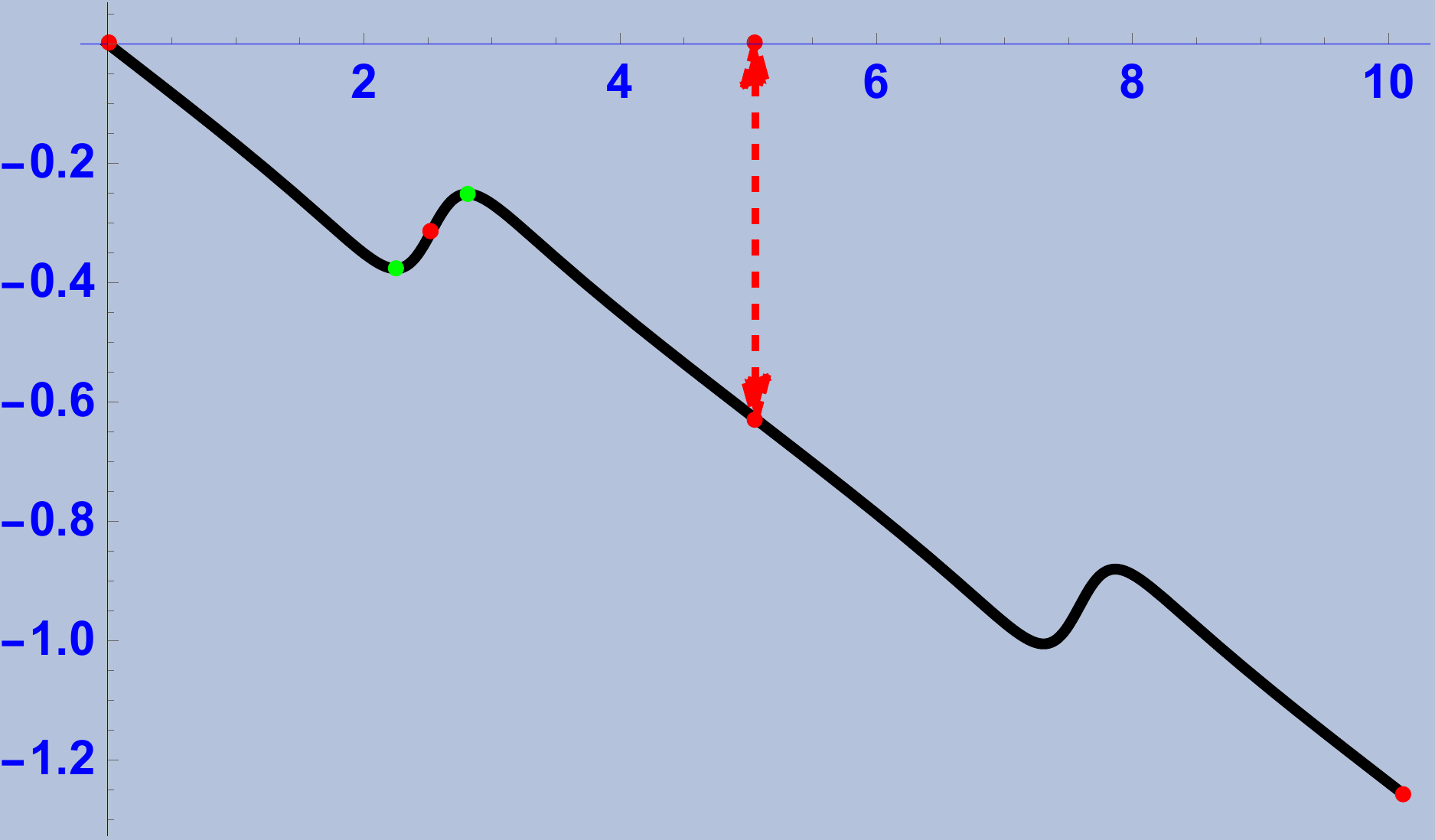}\quad\,\,\,
		\includegraphics[height=4cm,width=6cm]{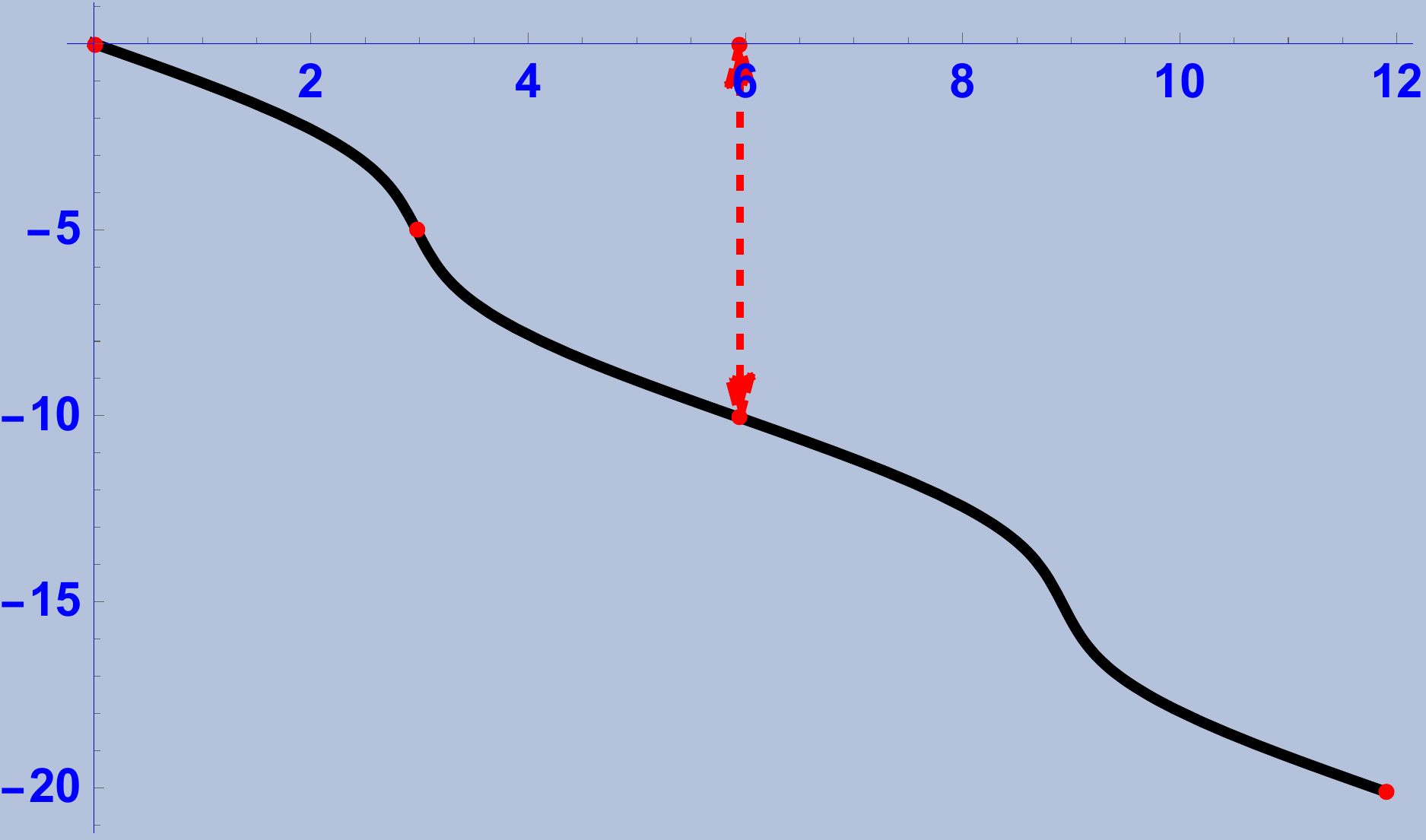}
		\caption{\small{The graphs of the angular functions $\Theta_{\mathfrak p}$ with ${\mathfrak p}=(\lambda,e_2)\in\mathcal{T}$, represented from $0$ to two least periods $2\omega_{\mathfrak p}$. Left: Case ${\mathfrak p}\in\mathcal{T}_-$ where $\lambda=-0.99$ and $e_2=1.05$. Right: Case $\mathfrak{p}\in\mathcal{T}_+$ where $\lambda=-0.9$ and $e_2=1.24$. The dashed arrow represents the value $\Theta_{\mathfrak p}(\omega_{\mathfrak p})$. If the length of this arrow is a rational multiple of $2\pi$, the BT-curve $\gamma_{\mathfrak p}$ will be closed (see Section \ref{BT-Strings}).
		}}\label{FIG20}
	\end{center}
\end{figure}

We now describe the main features of the angular functions (see Figure \ref{FIG20}):
\begin{itemize}
\item The angular function $\Theta_{{\mathfrak p}}$ is an odd quasi-periodic function with least quasi-period $\omega_{{\mathfrak p}}$.
\item The function $\Theta_{{\mathfrak p}}$ tends to $-\infty$ when $s\to +\infty$ and to $+\infty$ when $s\to -\infty$.
\item If ${\mathfrak p}\in {\mathcal E}\cup {\mathcal T}_+$, 
$\Theta_{{\mathfrak p}}$  is strictly decreasing,  with ascending inflection points at $\omega_{{\mathfrak p}}/2+h\omega_{{\mathfrak p}}$, $h\in \Z$ and descending inflection points at $\omega_{{\mathfrak p}}+h\omega_{{\mathfrak p}}$, $h\in \Z$.
\item If ${\mathfrak p}\in {\mathcal T}_-$,   $\Theta_{{\mathfrak p}}$ has a relative minimum at $s^*_{{\mathfrak p}}\in (0,\omega_{{\mathfrak p}}/2)$ and a relative maximum at  $s^{**}_{{\mathfrak p}}=\omega_{{\mathfrak p}}-s^*_{{\mathfrak p}}$.  There are no other critical points in the interval $[0,\omega_{{\mathfrak p}}]$.   $\Theta_{{\mathfrak p}}$ has descending inflection points at $\omega_{{\mathfrak p}}/2+h\omega_{{\mathfrak p}}$, $h\in \Z$ and ascending inflection points at $\omega_{{\mathfrak p}}+h\omega_{{\mathfrak p}}$, $h\in \Z$.
\end{itemize}

Once the radial and angular functions have been introduced and their behavior described, we can proceed with the parameterization of BT-curves.

\begin{thm}\label{IntqS} Let ${\mathfrak p}=(\lambda,e_2)\in {\mathcal T}$ and $\mu_{{\mathfrak p}}$ be the solution of \eqref{ode} with initial condition $\mu_\mathfrak{p}(0)=e_2$ where $c<0$ is as in $(vi)$ of (\ref{rele1e2Lc}). Let $\varrho_{{\mathfrak p}}$ and $\Theta_{{\mathfrak p}}$ be the radial and angular functions defined above, respectively.
Then,
\begin{equation}\label{eq1OTSBis}
	\gamma_{{\mathfrak p}}=\left(\frac{1}{2\sqrt{|c|}\,\mu_{{\mathfrak p}}},-\varrho_{{\mathfrak p}}\cos(\Theta_{{\mathfrak p}}), \varrho_{{\mathfrak p}}\sin(\Theta_{{\mathfrak p}})\right)
\end{equation}
is a BT-curve with modulus $\mathfrak p$ and momentum
$\vec{\xi}=(\sqrt{|c|},0,0)$.
\end{thm}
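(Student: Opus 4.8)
The plan is to adapt the scheme used in the first parts of Theorems \ref{TypeL} and \ref{TypeS}, which reduce to four direct verifications: that $\gamma_{\mathfrak p}$ takes values in $\HH^2$, that it is parameterized by arc length, that its geodesic curvature equals $\mu_{\mathfrak p}^2$, and that its conserved Noether vector \eqref{Noether} is the time-like vector $\vec{\xi}=(\sqrt{|c|},0,0)$. The shape of the ansatz \eqref{eq1OTSBis} is dictated by the geometry: the stabilizer of $(\sqrt{|c|},0,0)$ is the compact group of rotations in the $(x^2,x^3)$-plane, so that $\varrho_{\mathfrak p}$ plays the role of a distance to the $x^1$-axis and $\Theta_{\mathfrak p}$ that of a rotation angle, exactly as in the spherical case treated in \cite{MP}. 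This compactness, absent in the BL and BS cases, is what will eventually allow closed trajectories, and it explains why only the time-like momentum is treated in polar form.

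First I would check $\gamma_{\mathfrak p}\cdot\gamma_{\mathfrak p}=-1$. Writing the first component as $1/(2\sqrt{|c|}\,\mu_{\mathfrak p})$, the only input needed is the algebraic identity $\varrho_{\mathfrak p}^2=(1+4c\mu_{\mathfrak p}^2)/(4|c|\mu_{\mathfrak p}^2)=1/(4|c|\mu_{\mathfrak p}^2)-1$, valid because $c<0$; this cancels the square of the first component and leaves $-1$. For the unit-speed property, the rotational part contributes $(\dot\gamma^2)^2+(\dot\gamma^3)^2=\dot\varrho_{\mathfrak p}^2+\varrho_{\mathfrak p}^2\dot\Theta_{\mathfrak p}^2$, so that $\dot\gamma_{\mathfrak p}\cdot\dot\gamma_{\mathfrak p}=-(\dot\gamma^1)^2+\dot\varrho_{\mathfrak p}^2+\varrho_{\mathfrak p}^2\dot\Theta_{\mathfrak p}^2$. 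Inserting $\dot\Theta_{\mathfrak p}=2\sqrt{|c|}\,\mu_{\mathfrak p}^2(\mu_{\mathfrak p}+2\lambda)/(1+4c\mu_{\mathfrak p}^2)$ and $\dot\varrho_{\mathfrak p}=-\dot\mu_{\mathfrak p}/(2\sqrt{|c|}\,\mu_{\mathfrak p}^2\sqrt{1+4c\mu_{\mathfrak p}^2})$, the expression collapses to $(1+4c\mu_{\mathfrak p}^2)^{-1}\bigl(\dot\mu_{\mathfrak p}^2/\mu_{\mathfrak p}^2+\mu_{\mathfrak p}^2(\mu_{\mathfrak p}+2\lambda)^2\bigr)$, and substituting the conservation law \eqref{ode} turns the bracket into $1+4c\mu_{\mathfrak p}^2$, giving $\dot\gamma_{\mathfrak p}\cdot\dot\gamma_{\mathfrak p}=1$.

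Next I would compute the geodesic curvature $\kappa=\ddot\gamma_{\mathfrak p}\cdot(\gamma_{\mathfrak p}\times\dot\gamma_{\mathfrak p})$; differentiating the radial-trigonometric product twice, eliminating $\ddot\mu_{\mathfrak p}$ via the Euler--Lagrange equation \eqref{eulerlagrange} and $\dot\mu_{\mathfrak p}^2$ via \eqref{ode}, $\kappa$ should reduce to $\mu_{\mathfrak p}^2$, so that $\gamma_{\mathfrak p}$ is a B-curve with Blaschke invariant $\mu_{\mathfrak p}$, hence of modulus $\mathfrak p$. Finally, evaluating \eqref{Noether} at $s=0$, where $\mu_{\mathfrak p}(0)=e_2$, $\dot\mu_{\mathfrak p}(0)=0$ and $\Theta_{\mathfrak p}(0)=0$ give $\gamma_{\mathfrak p}(0)=(1/(2\sqrt{|c|}\,e_2),-\varrho_{\mathfrak p}(0),0)$ and $\dot\gamma_{\mathfrak p}(0)=(0,0,\varrho_{\mathfrak p}(0)\dot\Theta_{\mathfrak p}(0))$, the two nonzero components reduce, using $Q_{\mathfrak p}(e_2)=0$, to $\vec{\xi}=(\sqrt{|c|},0,0)$; since $\vec{\xi}\cdot\vec{\xi}=-|c|=c<0$, the momentum is time-like and $\gamma_{\mathfrak p}$ is a BT-curve.

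The heaviest step is the curvature computation, which requires the second derivatives of $\varrho_{\mathfrak p}\cos\Theta_{\mathfrak p}$ and $\varrho_{\mathfrak p}\sin\Theta_{\mathfrak p}$ together with a careful use of both \eqref{ode} and \eqref{eulerlagrange}. The genuinely delicate point, however, is the exceptional case $\mathfrak p\in\mathcal{E}$: there $1+4c e_1^2=0$, so at $s=\omega_{\mathfrak p}/2$ (where $\mu_{\mathfrak p}=e_1=-2\lambda$) the radial function $\varrho_{\mathfrak p}$ vanishes and the integrand defining $\Theta_{\mathfrak p}$ becomes $0/0$. Here I would use the factorizations $1+4c\mu_{\mathfrak p}^2=(e_1-\mu_{\mathfrak p})(e_1+\mu_{\mathfrak p})/(4\lambda^2)$ and $\mu_{\mathfrak p}+2\lambda=-(e_1-\mu_{\mathfrak p})$ to rewrite $\mu_{\mathfrak p}^2(\mu_{\mathfrak p}+2\lambda)/(1+4c\mu_{\mathfrak p}^2)=-4\lambda^2\mu_{\mathfrak p}^2/(\mu_{\mathfrak p}-2\lambda)$, whose denominator $\mu_{\mathfrak p}-2\lambda>0$ never vanishes; this is precisely the second formula defining $\Theta_{\mathfrak p}$ on $\mathcal{E}$, and together with the sign reversal of $\varrho_{\mathfrak p}$ past $s=\omega_{\mathfrak p}/2$ it makes $\gamma_{\mathfrak p}$ cross the degenerate point on the $x^1$-axis real-analytically, with least period doubled to $2\omega_{\mathfrak p}$. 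Verifying that the two branches patch smoothly and that the computations of the preceding paragraphs survive unchanged on $\mathcal{E}$ is the part demanding the most care.
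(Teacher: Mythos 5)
Your proposal is correct and follows exactly the scheme the paper uses: the published proof of this theorem is a one-line deferral to the computation in Theorem \ref{TypeL} (verify $\gamma\cdot\gamma=-1$, $\dot\gamma\cdot\dot\gamma=1$ via \eqref{ode}, $\kappa=\mu_{\mathfrak p}^2$ via \eqref{eulerlagrange} and \eqref{ode}, then evaluate \eqref{Noether} at $s=0$ using $Q_{\mathfrak p}(e_2)=0$), which is precisely what you carry out. Your explicit handling of the exceptional locus $\mathcal{E}$ --- the factorization $1+4c\mu_{\mathfrak p}^2=(e_1-\mu_{\mathfrak p})(e_1+\mu_{\mathfrak p})/(4\lambda^2)$ reconciling the two definitions of $\Theta_{\mathfrak p}$ and the sign reversal of $\varrho_{\mathfrak p}$ --- is a detail the paper leaves implicit, and it checks out.
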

\begin{proof} The proof of this theorem is analogous to the previous cases (see Theorem \ref{TypeL}).
\end{proof}

\begin{defn} The BT-curve $\gamma_\mathfrak{p}$ given by \eqref{eq1OTSBis} is referred to as the \emph{standard BT-curve} with modulus $\mathfrak{p}\in\mathcal{T}$.
\end{defn}

\begin{remark} \emph{Every BT-curve with modulus  ${\mathfrak p}\in {\mathcal T}$ is equivalent to $\gamma_{{\mathfrak p}}$, \eqref{eq1OTSBis}. From now on we implicitly assume that the BT-curves in consideration are in their standard form.}
\end{remark}

Let $\mathfrak{p}=(\lambda,e_2)\in\mathcal{T}$ and $\gamma_\mathfrak{p}$ be the standard BT-curve with modulus $\mathfrak{p}$. In the  Poincar\'e model,  $\gamma_{{\mathfrak p}}$ can be written as
\begin{equation}\label{TypeSDisk}
		\gamma_{{\mathfrak p}}=\frac{2\sqrt{|c|}\,\varrho_{{\mathfrak p}}\mu_\mathfrak{p}}{1+2\sqrt{|c|}\,\mu_{{\mathfrak p}}}(-\cos(\Theta_{{\mathfrak p}}),\sin(\Theta_{{\mathfrak p}})).
\end{equation}
Thus,  $|[\gamma_{{\mathfrak p}}]|$  is contained in the region  
$${\mathtt D}^2_{{\mathfrak p}}=\{\zeta \in {\rm D}^2/  {\rm r}'_{{\mathfrak p}}\le  |\zeta|\le {\rm r}''_{{\mathfrak p}}\},$$
where
$$\begin{cases}
	r'_{{\mathfrak p}}={\rm min}{\big\lvert}\frac{2\sqrt{|c|}\,\varrho_{{\mathfrak p}}\mu_\mathfrak{p}}{1+2\sqrt{|c|}\mu_{{\mathfrak p}}}{\big \lvert}=\frac{\sqrt{1+4c e_1^2}}{1+2\sqrt{|c|}e_1},\\
	r''_{{\mathfrak p}}={\rm max}{\big\lvert}\frac{2\sqrt{|c|}\,\varrho_{{\mathfrak p}}\mu_\mathfrak{p}}{1+2\sqrt{|c|}\mu_{{\mathfrak p}}}{\big \rvert}=\frac{\sqrt{1+4ce_2^2}}{1+2\sqrt{|c|}e_2}\,.
\end{cases}
$$
From this we see that $r'_\mathfrak{p}=0$ if and only if $1+4c e_1^2=0$, or in other words using the definition of the exceptional locus, if and only if $\mathfrak{p}\in\mathcal{E}$. Consequently, the trajectory $|[\gamma_{{\mathfrak p}}]|$  passes through the origin if and only if ${\mathfrak p}\in {\mathcal E}$.  In this case ${\mathtt D}_{{\mathfrak p}}^2$ is a disk. Otherwise ${\mathtt D}_{{\mathfrak p}}^2$ is an annulus and the outer and inner circles bounding ${\mathtt D}_{{\mathfrak p}}^2$,  denoted by $\mathcal{O}^{\pm}$ are said the \emph{outer and inner osculating circles}. The trajectory $|[\gamma_{{\mathfrak p}}]|$ is tangent to $\mathcal{O}^{+}$ at  $\gamma_{{\mathfrak p}}(h\omega_{{\mathfrak p}})$, $h\in \Z$ and is tangent to  $\mathcal{O}^{-}$ at $\gamma_{{\mathfrak p}}(\omega_{{\mathfrak p}}/2+h\omega_{{\mathfrak p}})$, $h\in \Z$. From (\ref{eq1OTSBis}) it follows that $\gamma_\mathfrak{p}$ is closed if and only if $\Theta_\mathfrak{p}(\omega_\mathfrak{p})/2\pi\in {\mathbb Q}$. Otherwise $|[\gamma_{{\mathfrak p}}]|$ is a dense subset of 
${\mathtt D}_{{\mathfrak p}}^2$. Several closed examples will be illustrated in Figures \ref{FIG22} and \ref{FIG23}.

\section{BT-Strings}\label{BT-Strings}

In this section we will study the existence of BT-strings and the kinematics of their corresponding isomonodromic family. As mentioned above a BT-curve $\gamma_\mathfrak{p}$ will be closed if and only if its angular function $\Theta_\mathfrak{p}$ evaluated at the wavelength $\omega_{\mathfrak p}$ is a rational multiple of $2\pi$ (the quantity $\Theta_{\mathfrak p}(\omega_{\mathfrak p})$ is represented in Figure \ref{FIG20} by the dashed arrow). In order to analyze this closure condition we will define a couple of functions which will appear in an essential way.

\begin{defn} Let $\mathfrak{p}\in\mathcal{T}$ and $\gamma_\mathfrak{p}$ be the BT-curve with modulus $\mathfrak{p}$. The \emph{period map} ${\mathcal P}:{\mathcal T}\to \R$ is defined by
\begin{equation*} {\mathcal P}(\mathfrak p)=\begin{cases}  -\frac{1}{2\pi}\Theta_{\mathfrak p}(\omega_{\mathfrak p})+1\,,\quad\quad\,\,{\rm if}\,\,\, \mathfrak p\in {\mathcal T}_-\,,\\
-\frac{1}{2\pi}\Theta_{\mathfrak p}(\omega_{\mathfrak p})+\frac{1}{2}\,,\quad\quad\,{\rm if}\,\,\, \mathfrak p\in {\mathcal E}\,,\\
-\frac{1}{2\pi}\Theta_{\mathfrak p}(\omega_{\mathfrak p})\,,\quad \quad\quad \quad {\rm if}\,\,\,\mathfrak p\in {\mathcal T}_+\,,
\end{cases}
\end{equation*}
where $\Theta_\mathfrak{p}$ is the angular function and $\omega_\mathfrak{p}$ is the wavelength, ie. the least period of the Blaschke invariant of $\gamma_\mathfrak{p}$.
\end{defn}

\begin{remark}\label{4.2} \emph{Since ${\mathcal P}({\mathfrak p})\cong - \Theta_\mathfrak{p}(\omega_\mathfrak{p})/2\pi\,\, {\rm mod}({\mathbb Q})$ we conclude that $\gamma_\mathfrak{p}$ is closed (ie. a BT-string) if and only if ${\mathcal P}({\mathfrak p})\in {\mathbb Q}$. Hence, in order to prove the existence of BT-strings, it suffices to check that, for fixed multiplier $\lambda$, the image of $\mathcal{P}\equiv\mathcal{P}_\lambda$ is not constant, ie. $\mathcal{P}_\lambda(e_2)$ attains different values as $e_2$ varies in ${\rm I}_{\lambda}=(a(\lambda),\eta_+(\lambda))$. Recall that the graph of the function $a$ is the lower boundary of the moduli space $\mathcal{T}$, while the graph of $\eta_+$ is the upper boundary.} 
\end{remark}

\begin{defn}\label{4.3} Let $\mathfrak{p}\in\mathcal{T}$ and $\gamma_\mathfrak{p}$ be the BT-curve with modulus $\mathfrak{p}$. The function $\chi: (-\infty, -2/\sqrt[4]{27}\,)\to \R$ is defined by 
	\begin{equation*}\label{chifn}\chi(\lambda) =  \frac{\eta_+(\lambda)^4-1}{\sqrt{\eta_+(\lambda)^8-4\eta_+(\lambda)^4+3}}\in \R\,,
	\end{equation*}
	where $\eta_+(\lambda)$ is the function defined in \eqref{eta}.
\end{defn}

\begin{remark} \emph{The function $\chi$ is strictly increasing and tends to $1$ when $\lambda\to -\infty$ and to $+\infty$ when $\lambda\to -2/\sqrt[4]{27}\,^-$.} 
\end{remark}

We denote by ${\rm J}_{\lambda}$ the open interval 
$${\rm J}_{\lambda}=\begin{cases} (1,\chi(\lambda))\,,\quad\quad\quad\quad \lambda\le -1\,,\\
	(\chi(\lambda),+\infty)\,,\quad\quad\,\,\,  -1<\lambda <-2/\sqrt[4]{27}\,.
\end{cases}$$

The following result shows the existence of infinitely many BT-strings for every multiplier $\lambda<-2/\sqrt[4]{27}$.

\begin{thm}\label{existence} For every $\lambda<-2/\sqrt[4]{27}$ and every $q=m/n\in {\rm J}_{\lambda}\cap{\mathbb Q}$ there exist ${\mathfrak p}=(\lambda,e_2)\in {\mathcal T}$ such that $\gamma_{\mathfrak p}$ is a BT-string with ${\mathcal P}({\mathfrak p})=q$. 
\end{thm}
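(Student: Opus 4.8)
The plan is to fix the multiplier $\lambda<-2/\sqrt[4]{27}$ and to regard the period map as a function ${\mathcal P}_\lambda$ of the single variable $e_2\in {\rm I}_\lambda=(a(\lambda),\eta_+(\lambda))$, so that $\mathfrak{p}=(\lambda,e_2)$. By Remark \ref{4.2} the curve $\gamma_\mathfrak{p}$ is a BT-string precisely when ${\mathcal P}_\lambda(e_2)\in{\mathbb Q}$. Thus it suffices to prove that ${\mathcal P}_\lambda$ is continuous on ${\rm I}_\lambda$ and that its image contains the interval ${\rm J}_\lambda$; then the intermediate value theorem produces, for each $q=m/n\in {\rm J}_\lambda\cap{\mathbb Q}$, a value $e_2$ with ${\mathcal P}_\lambda(e_2)=q$, and $\mathfrak{p}=(\lambda,e_2)\in{\mathcal T}$ is the required modulus. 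Observe that no monotonicity of ${\mathcal P}_\lambda$ is needed: surjectivity onto ${\rm J}_\lambda$ alone suffices.

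First I would establish the continuity of ${\mathcal P}_\lambda$. On each of the two open strata ${\mathcal T}_-$ and ${\mathcal T}_+$ this is immediate from the real-analytic dependence of the wavelength $\omega_\mathfrak{p}$ on $\mathfrak{p}$, already recorded after \eqref{ellipticmoduli}, together with the corresponding real-analytic dependence of the integrand defining the angular function $\Theta_\mathfrak{p}$. The delicate point, and the main obstacle, is continuity across the exceptional locus ${\mathcal E}$, where the trajectory passes through the origin of the Poincar\'e disk and the polar angle is indeterminate: there both $\varrho_\mathfrak{p}$ and $\Theta_\mathfrak{p}$ are defined by different formulas, and $\Theta_\mathfrak{p}(\omega_\mathfrak{p})$ undergoes a half-turn jump as $e_2$ crosses the value cutting out ${\mathcal E}$. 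I would show that the additive offsets $+1$, $+\tfrac12$, $0$ built into the definition of ${\mathcal P}$ on ${\mathcal T}_-$, ${\mathcal E}$, ${\mathcal T}_+$ are exactly the quantities that compensate for this jump, so that the three pieces glue into a single continuous function on all of ${\rm I}_\lambda$.

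Next I would compute the two endpoint limits of ${\mathcal P}_\lambda$. At the upper end $e_2\to\eta_+(\lambda)^-$ the Blaschke invariant collapses onto the constant value $\eta_+(\lambda)$, i.e. the flow of $\vec{X}_\lambda$ degenerates to the stable center ${\bf m}_+(\lambda)$. Linearizing the conservation law \eqref{ode} about this equilibrium, the period $\omega_\mathfrak{p}$ tends to the period of the linearization while the ratio $\Theta_\mathfrak{p}(\omega_\mathfrak{p})/2\pi$ tends to a limit that a direct computation identifies with $\chi(\lambda)$ of Definition \ref{4.3}; hence ${\mathcal P}_\lambda(e_2)\to\chi(\lambda)$. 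At the lower end the analysis bifurcates. When $-1<\lambda<-2/\sqrt[4]{27}$ one has $a(\lambda)=\eta_-(\lambda)$, the saddle ${\bf m}_-(\lambda)$, so as $e_2\to\eta_-(\lambda)^+$ the orbit approaches the homoclinic loop, $\omega_\mathfrak{p}\to+\infty$, and the integral defining $\Theta_\mathfrak{p}(\omega_\mathfrak{p})$ diverges, forcing ${\mathcal P}_\lambda\to+\infty$; thus the image covers $(\chi(\lambda),+\infty)={\rm J}_\lambda$. When $\lambda\le-1$ one has $a(\lambda)=-\lambda+\sqrt{\lambda^2-1}=b_0(\lambda)$, the boundary curve ${\mathcal L}$ of BL-curves, along which $c\to0^-$; a degeneration argument as the momentum becomes light-like gives ${\mathcal P}_\lambda\to1$, so the image covers $(1,\chi(\lambda))={\rm J}_\lambda$.

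Combining the three steps, ${\mathcal P}_\lambda$ is continuous on the interval ${\rm I}_\lambda$ with endpoint limits equal to the two endpoints of ${\rm J}_\lambda$; since the continuous image of an interval is an interval, the image of ${\mathcal P}_\lambda$ contains ${\rm J}_\lambda$. Consequently every $q=m/n\in {\rm J}_\lambda\cap{\mathbb Q}$ is attained, which proves the theorem. I expect the two genuinely technical ingredients to be the gluing across ${\mathcal E}$ in the continuity step and the explicit identification of the center limit with $\chi(\lambda)$; the saddle and light-like limits, by contrast, should follow from standard asymptotics of the defining elliptic integrals.
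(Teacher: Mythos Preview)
Your proposal is correct and follows essentially the same strategy as the paper: reduce to the one-variable map ${\mathcal P}_\lambda$, prove continuity on ${\rm I}_\lambda$ (with the only subtlety being the gluing across ${\mathcal E}$, where the paper confirms the offsets $+1,+\tfrac12,0$ exactly cancel a $\pm\tfrac12$ jump in the elliptic-integral representation), compute the two endpoint limits with the same case split at $\lambda=-1$, and conclude by the intermediate value theorem. The only notable difference is in execution rather than architecture: where you suggest linearization at the center, homoclinic divergence at the saddle, and a light-like degeneration, the paper carries out all three limits and the gluing by writing $-\Theta_{\mathfrak p}(\omega_{\mathfrak p})/2\pi$ explicitly as a combination of complete elliptic integrals ${\rm K}$ and $\Pi$ (via the Byrd--Friedman tables) and then invoking their standard asymptotics.
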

\begin{proof} The proof of this result is a direct consequence of the following claim involving the limits of the period map:
\\

\noindent {\bf Claim.} \emph{Let ${\rm I}_{\lambda}=(a(\lambda),\eta_+(\lambda))$. The period map ${\mathcal P}_{\lambda}:{\rm I}_{\lambda}\to \R$ defined by ${\mathcal P}_{\lambda}(e_2)={\mathcal P}(\lambda,e_2)$ is continuous. Moreover:
	\begin{itemize}
		\item If $\lambda\leq -1$,
		$$\lim_{e_2\to a(\lambda)^+}{\mathcal P}_{\lambda}(e_2)=1\,,\quad\quad\quad \lim_{e_2\to \eta_+(\lambda)^-}{\mathcal P}_{\lambda}(e_2)=\chi(\lambda)\,.$$
		\item If $-1<\lambda<-2/\sqrt[4]{27}$,
		$$\lim_{e_2\to a(\lambda)^+}{\mathcal P}_{\lambda}(e_2)=+\infty\,,\quad\quad\quad \lim_{e_2\to \eta_+(\lambda)^-}{\mathcal P}_{\lambda}(e_2)=\chi(\lambda)\,.$$
	\end{itemize}}

\noindent For the sake of clarity, the proof of this claim will be postponed to the appendix.  
\end{proof}

\begin{figure}[h]
	\makebox[\textwidth][c]{
		\includegraphics[height=4cm,width=4cm]{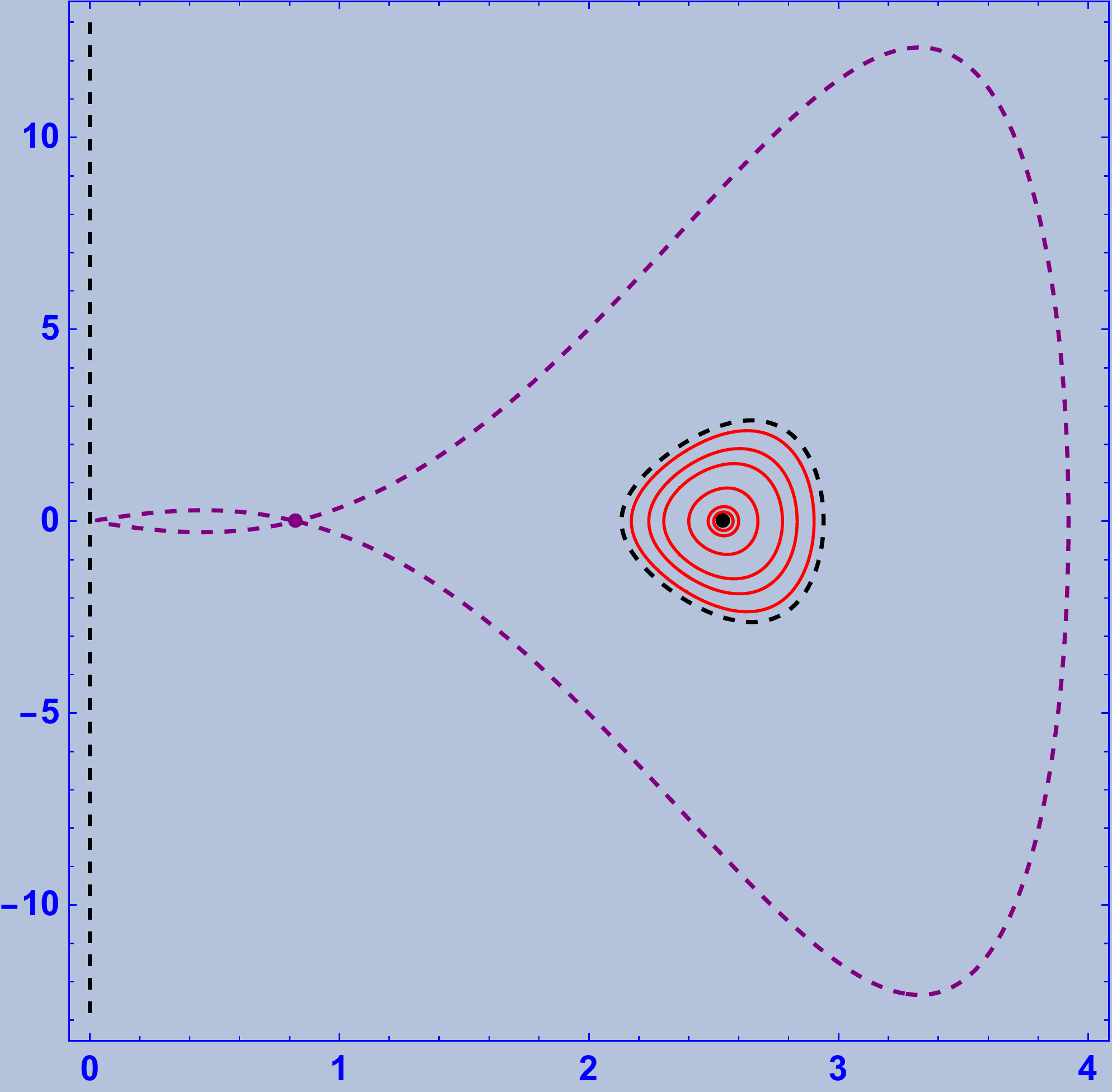}\quad
		\includegraphics[height=4cm,width=4cm]{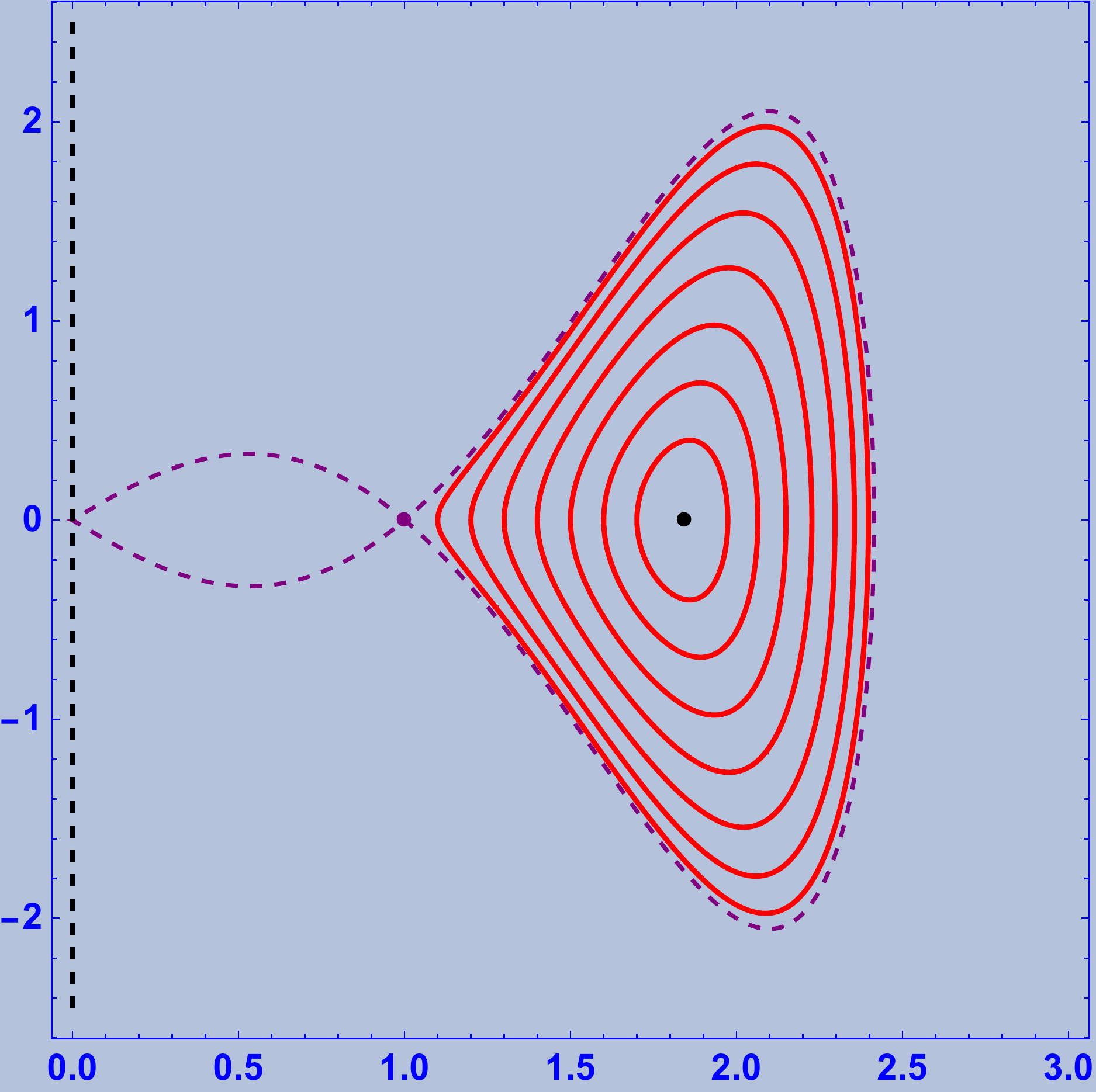}\quad
		\includegraphics[height=4cm,width=4cm]{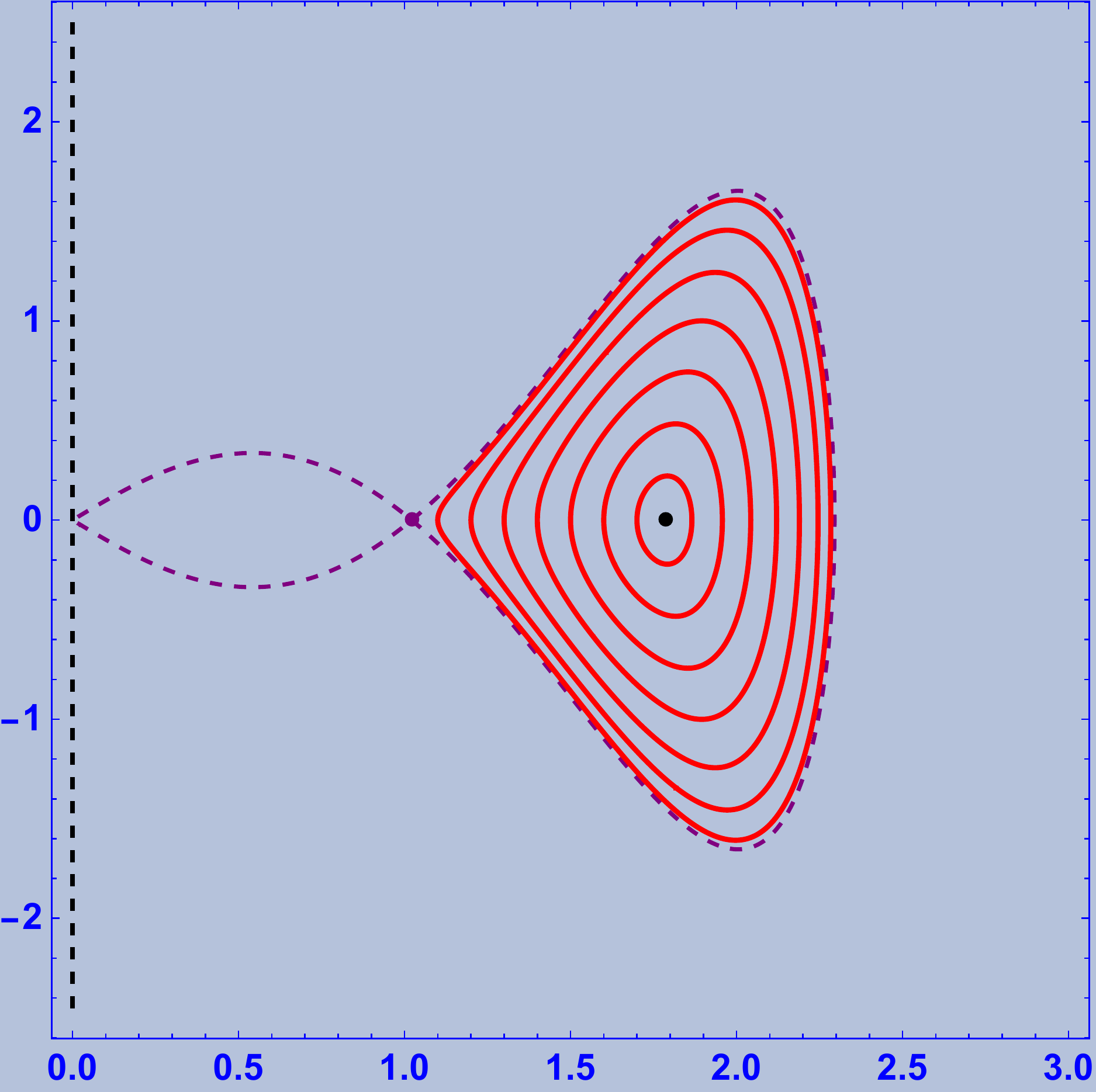}}
	\caption{\small{Left: The modified invariant signatures $\hat{\mathfrak S}_{\lambda,e_2}$, $e_2\in (a(\lambda),\eta_+(\lambda))$ (red) and the signature $\hat{\mathfrak S}_{\lambda,a(\lambda)}$ (dashed-black), for $\lambda=-1.3$. Center: The modified invariant signatures $\hat{\mathfrak S}_{-1,e_2}$, $e_2\in (1,\eta_+(\lambda))$ (red) and the signature $\hat{\mathfrak S}_{-1,1}$ (dashed-black). Right: The modified invariant signatures $\hat{\mathfrak S}_{\lambda,e_2}$, $e_2\in (a(\lambda),\eta_+(\lambda))$ (red) and the signature $\hat{\mathfrak S}_{\lambda,a(\lambda)}$ (dashed-black), for $\lambda=-0.98$.}}\label{FIGFP1}
\end{figure}

We anticipate some properties of the  functions ${\mathcal P}_{\lambda}:e_2\to {\mathcal P}(\lambda,e_2)$ that will be proved in the appendix as part of the proof of the existence of BT-strings.  The purpose is  to highlight the interrelationships between the behavior of the functions  ${\mathcal P}_{\lambda}$ and the geometry of the modified invariant signatures of critical curves with multiplier $\lambda$. Denote by $\hat{\mathfrak S}_{\lambda,e_2}$ the modified invariant signature of a B-curve with modulus $\mathfrak{p}=(\lambda,e_2)\in {\mathfrak P}$. There are three cases to be considered:
\begin{enumerate}  
\item Case $\lambda<-1$. Then $(\lambda, a(\lambda))\in {\mathcal L}$  is the modulus of a BL-curve whose modified invariant signature 
$\hat{\mathfrak S}_{\lambda,a(\lambda)}$ is closed.  The modified invariant signatures $\hat{\mathfrak S}_{\lambda,e_2}$, $e_2\in {\rm I}_{\lambda}$ are contained in the internal region bounded by $\hat{\mathfrak S}_{\lambda,a(\lambda)}$. When $e_2\to a(\lambda)^+$ the signature $\hat{\mathfrak S}_{\lambda,e_2}$ tends to $\hat{\mathfrak S}_{\lambda,a(\lambda)}$ and when $e_2\to \eta_+(\lambda)^-$,  $\hat{\mathfrak S}_{\lambda,e_2}$ contracts, tending to the stable equilibrium point $(\eta_+(\lambda),0)$ (see the picture on the left of Figure \ref{FIGFP1}). The function ${\mathcal P}_{\lambda}$ is strictly increasing (this assertion is only supported by the experimental evidence).  When $e_2\to a(\lambda)^+$,   ${\mathcal P}_{\lambda}$ tends to $1$ and when $e_2\to \eta_+(\lambda)^-$,  ${\mathcal P}_{\lambda}$ tends to $\chi(\lambda)$.
\item Case $\lambda=-1$.  Then $a(-1)=1$ and the signature  $\hat{\mathfrak S}_{-1,1}$  is the unstable equilibrium point $(1,0)$. Let $\hat{\mathfrak S}^*_{-1}$ be the  exceptional modified invariant signature of the first kind.  Then $\hat{\mathfrak S}^*_{-1}\cup\{(1,0)\}$ is a simple (singular) closed curve.  The region bounded by $\hat{\mathfrak S}^*_{-1}\cup\{(1,0)\}$  contains the stable equilibrium point $(\eta_+(-1),0)$.  The signatures $\hat{\mathfrak S}_{-1,e_2}$, $e_2\in {\rm I}_{-1}$ are contained in the internal region bounded by $\hat{\mathfrak S}^*_{-1}\cup\{(1,0)\}$.  When $e_2\to 1^+$,  $\hat{\mathfrak S}_{\lambda,e_2}$ tends to $\hat{\mathfrak S}^*_{-1}\cup\{(1,0)\}$ and, when $e_2\to \eta_+(\lambda)^-$,  $\hat{\mathfrak S}_{\lambda,e_2}$ contracts  to the stable equilibrium point (see the picture on the center of Figure \ref{FIGFP1}). The function ${\mathcal P}_{\lambda}$ is strictly increasing (again, this fact follows from experimental evidence). When $e_2\to 1^+$,  ${\mathcal P}_{\lambda}$ tends to $1$ and when $e_2\to \eta_+(-1)^-$,  ${\mathcal P}_{\lambda}$ tends to $\chi(-1)$.
\item Case $-1<\lambda<-2/\sqrt[4]{27}$. Then $a(\lambda)>1$ and the signature  $\hat{\mathfrak S}_{\lambda,a(\lambda)}$ is the unstable equilibrium point $(a(\lambda),0)$. Let $\hat{\mathfrak S}^*_{\lambda}$ be the exceptional modified invariant signature of the first kind. Then $\hat{\mathfrak S}^*_{\lambda}\cup\{(a(\lambda),0)\}$ is a simple (singular) closed curve. As in the previous case, the region bounded by $\hat{\mathfrak S}^*_{\lambda}\cup\{(a(\lambda),0)\}$ contains the stable equilibrium point and the signatures $\hat{\mathfrak S}_{\lambda,e_2}$, $e_2\in {\rm I}_{\lambda}$.  When $e_2\to a(\lambda)^+$,  $\hat{\mathfrak S}_{\lambda,e_2}$ tends to $\hat{\mathfrak S}^*_{\lambda}\cup\{(1,0)\}$ and, when $e_2\to \eta_+(\lambda)^-$,  $\hat{\mathfrak S}_{\lambda,e_2}$ contracts, to the stable equilibrium point (see the picture on the right of Figure \ref{FIGFP1}). When $e_2\to \eta_+(\lambda)^-$,  ${\mathcal P}_{\lambda}$ tends to $\chi(\lambda)$ and ${\mathcal P}_{\lambda}=O\left(\log(4/\sqrt{e_2-a(\lambda)})\right)$ as $e_2\to a(\lambda)^+$.
\end{enumerate}

\begin{remark} \emph{Unlike in the other cases, if $\lambda>-1$, the function ${\mathcal P}_{\lambda}$ is not necessarily monotonous. Experimental evidence suggests the existence of a value of $\lambda$ $(\lambda _*  \approx -0.98147772)$ such that ${\mathcal P}_{\lambda}$ has a unique minimum point in the interval $(a(\lambda),\eta_+(\lambda))$, for every $\lambda \in (-1,\lambda_*)$ and is strictly decreasing, for every $\lambda\in [\lambda_*,-2/\sqrt[4]{27})$ (see Figure \ref{FIGFP3}). This is a huge difference with the spherical case, for which experiments suggest that $\mathcal{P}_\lambda$ is always monotonous \cite{MP}.}
\end{remark}

\begin{figure}[h]
	\makebox[\textwidth][c]{
		\includegraphics[height=4cm,width=4cm]{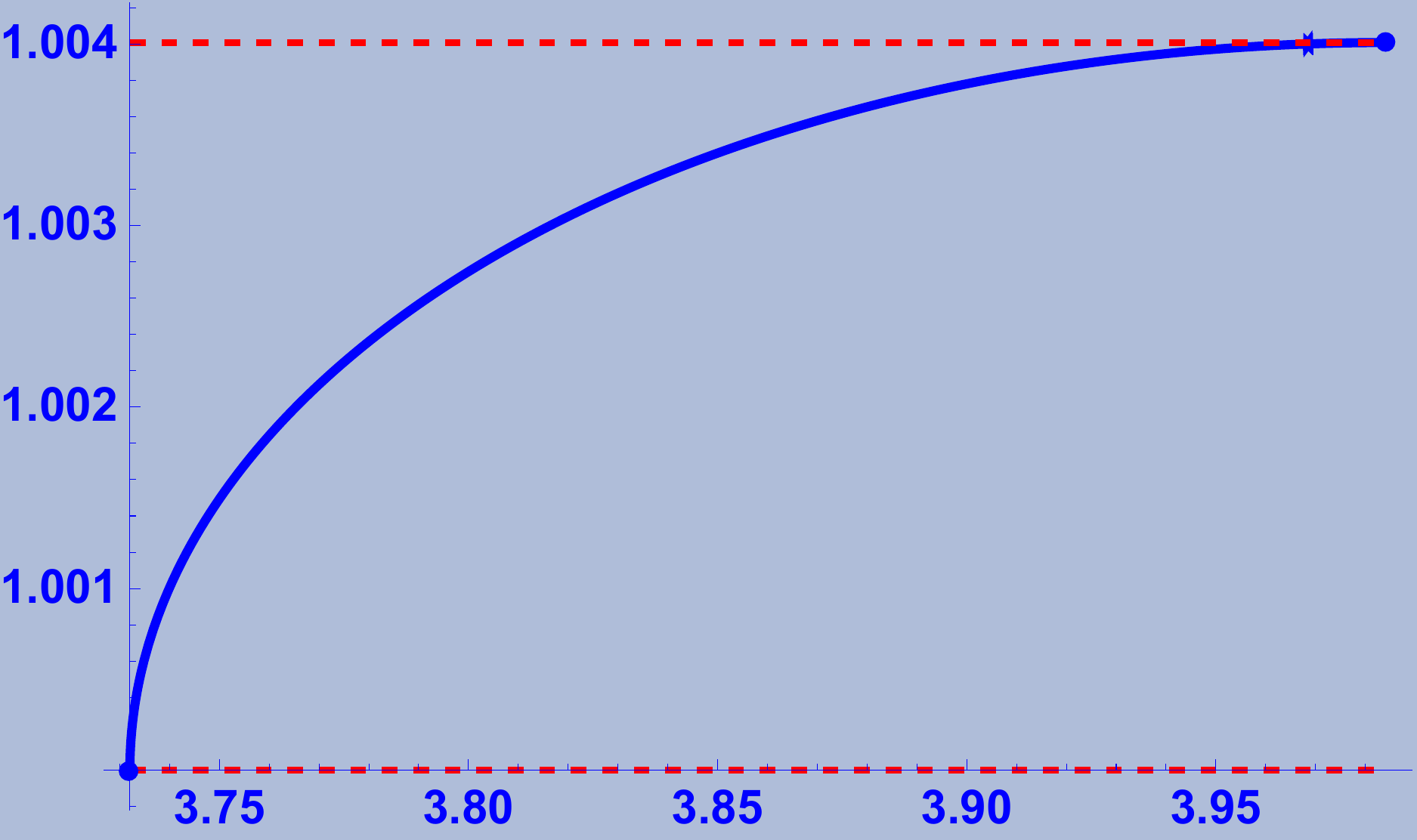}\quad
		\includegraphics[height=4cm,width=4cm]{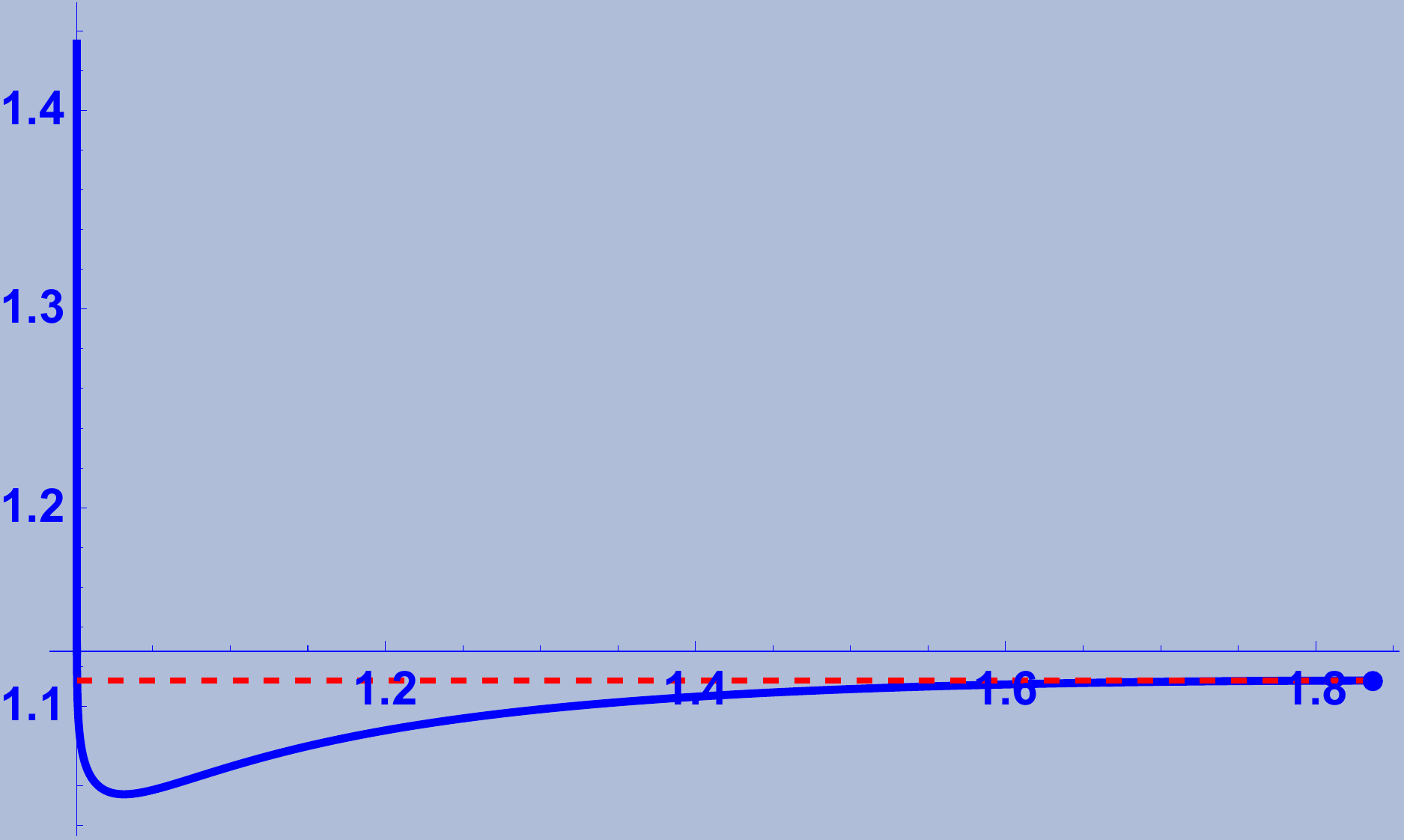}\quad
		\includegraphics[height=4cm,width=4cm]{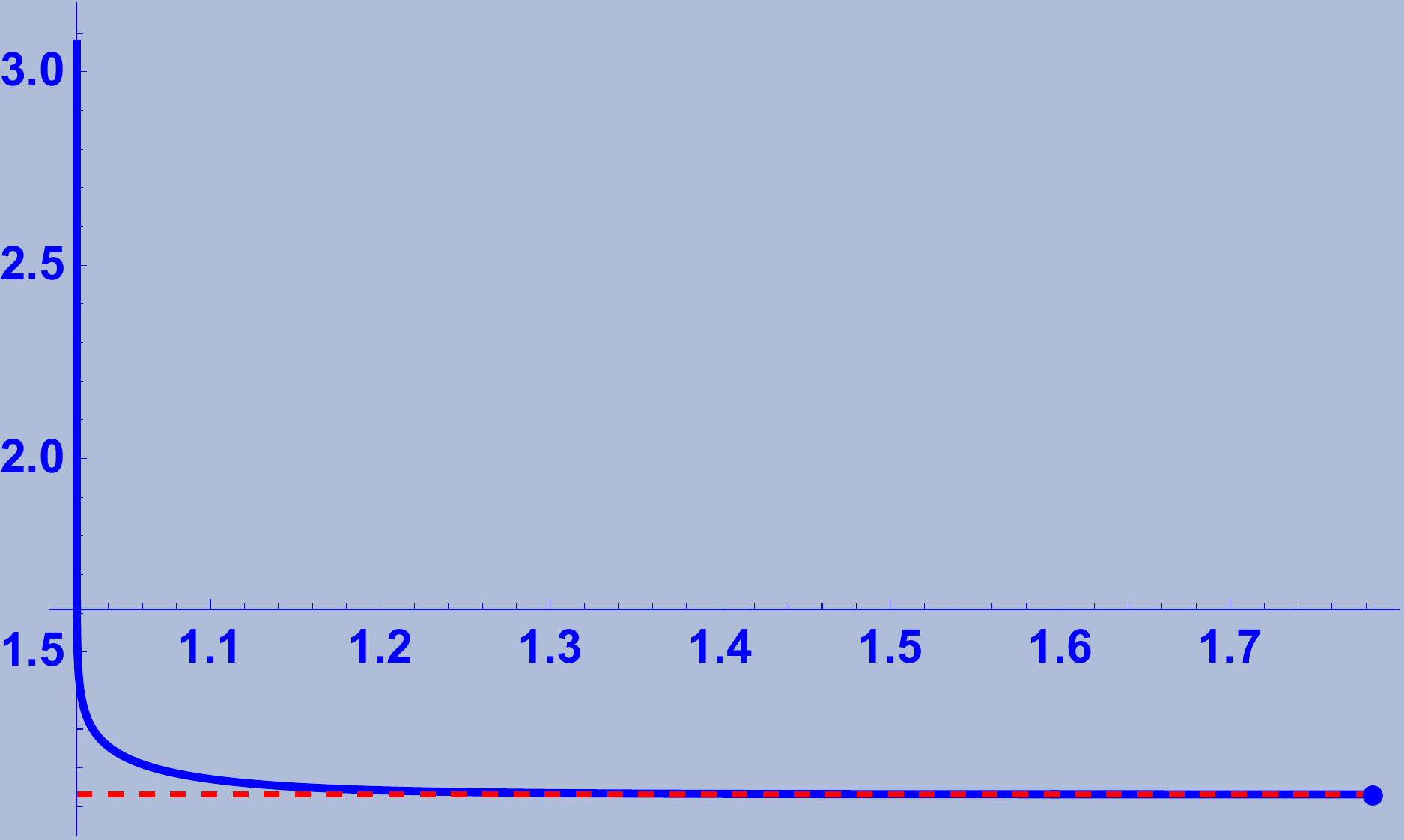}}
	\caption{\small{The graph of the functions $\mathcal{P}_\lambda$. From left to right: $\lambda=-1.3$, $-0.999$, and $-0.98$.}}\label{FIGFP3}
\end{figure}

\subsection{The Fibers of the Period Map} 

Consider the vector field $\vec{P}:\mathfrak{p}=(\lambda,e_2)\in\mathcal{T}\to (\partial_{e_2}\mathcal{P},-\partial_\lambda\mathcal{P})$, where $\mathcal{P}$ is the period map (for a plot see the picture on the left of Figure \ref{FIG21}). Based on experimental evidence, this vector field never vanishes and its second component is negative, ie. the period map $\mathcal{P}$ increases with $\lambda$. However, as highlighted in the previous remark, $\mathcal{P}$ may not be monotonous with respect to $e_2$. One can then deduce the following consequences:
\begin{itemize}
\item The fibers of the map ${\mathcal P}:{\mathcal T}\to \R$ are the integral curves of ${\vec P}$.
\item For every $q>1$ the fiber ${\mathcal T}_q={\mathcal P}^{-1}(q)$  is a simple open arc with boundary points ${\mathfrak p}_0=(-1,1)$ 
and ${\mathfrak p}_q=(\lambda^*_q,e^*_{q,2})$, where 
$$\lambda_q^{*}=-\frac{1+(e^{*}_{q,2})^4}{2(e^{*}_{q,2})^3}\,,\quad\quad   \frac{(e^{*}_{q,2})^4-1}{\sqrt{(e^{*}_{q,2})^8-4(e^{*}_{q,2})^4+3}}=q\,.$$
More precisely,  there exists a smooth function $\lambda _q:\tilde{{\rm J}}_q\to \R$,  defined on the open interval $\tilde{{\rm J}}_q=(1,e^{*}_{q,2})$ such that
${\frak p}_q:e_2\in \tilde{{\rm J}}_q\to (\lambda_q(e_2),e_2)$ is a parameterization of ${\mathcal T}_q$.  Moreover, $\lambda_q(1)=-1$ and 
$\lambda_q(e^{*}_{q,2})=-\lambda^*_q$ (see Figure \ref{FIG21}).
\item The fiber ${\mathcal T}_q$ intersects transversely the exceptional locus at a unique point $ \hat{\mathfrak p}_{q}=(\hat{\lambda}_2,\hat{e}_{q,2})$ (see Figure \ref{FIG21}, right).
\item When $q\to 1^+$, the fiber  ${\mathcal T}_q$ tends to ${\mathcal L}$.  If $q\to +\infty$, the fiber  ${\mathcal T}_q$ tends to the arc 
$\eta_-[(-1,-2/ \sqrt[4]{27})]$ (the green arc depicted on the right of Figure \ref{FIG21}).
\end{itemize}

\begin{figure}[h]
	\makebox[\textwidth][c]{
		\includegraphics[height=4cm,width=4cm]{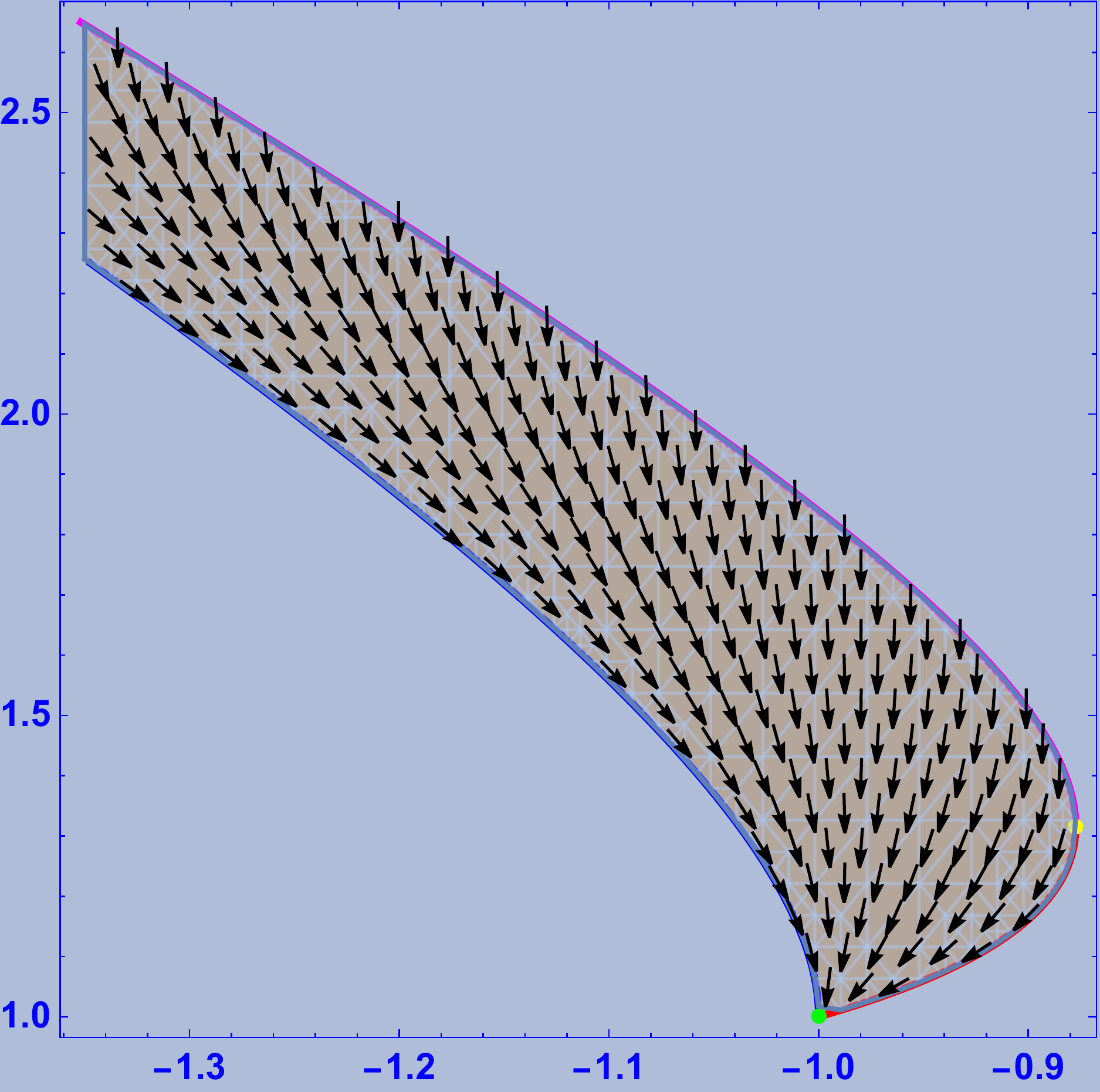}\quad
		\includegraphics[height=4cm,width=4cm]{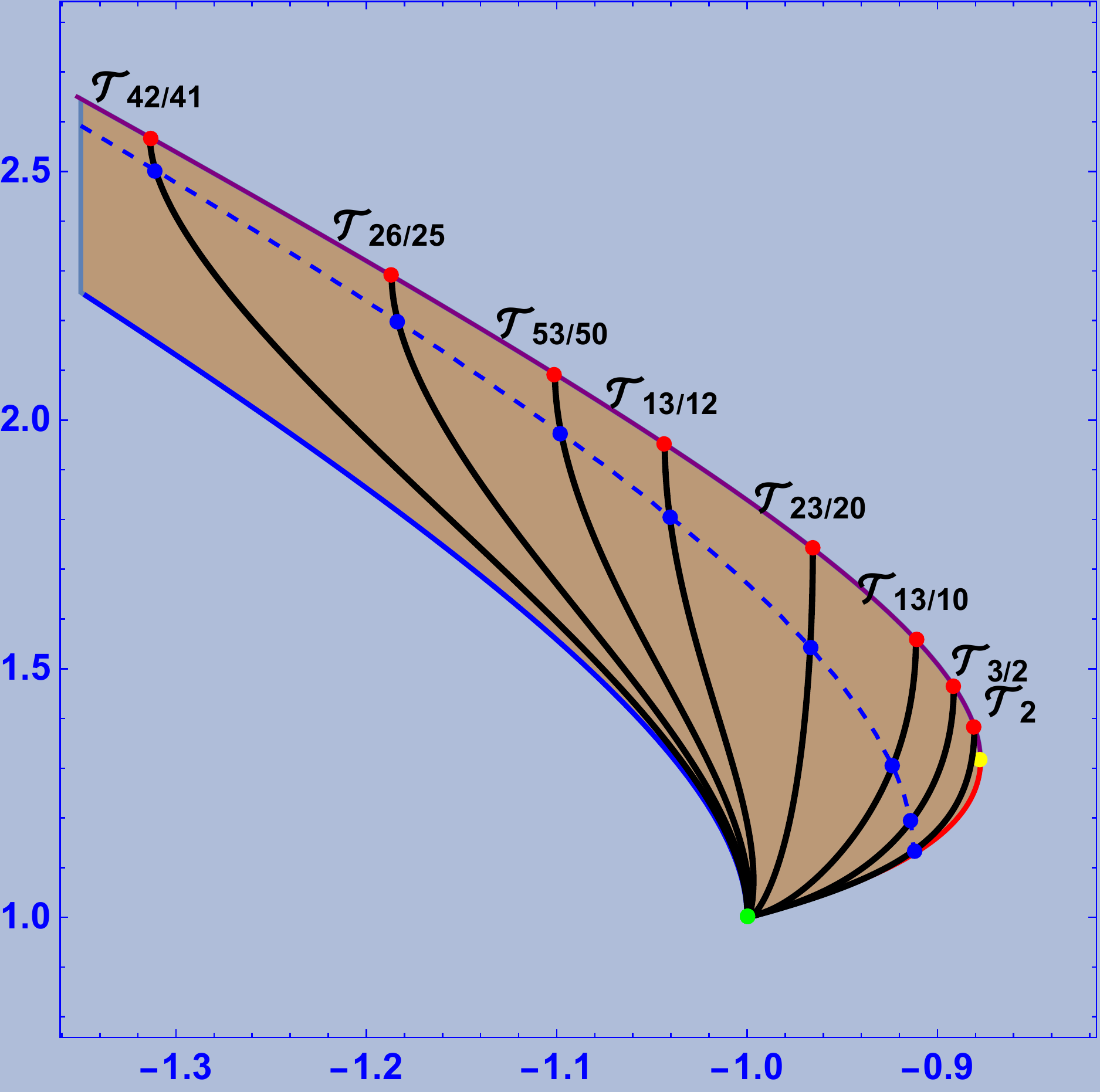}\quad
		\includegraphics[height=4cm,width=4cm]{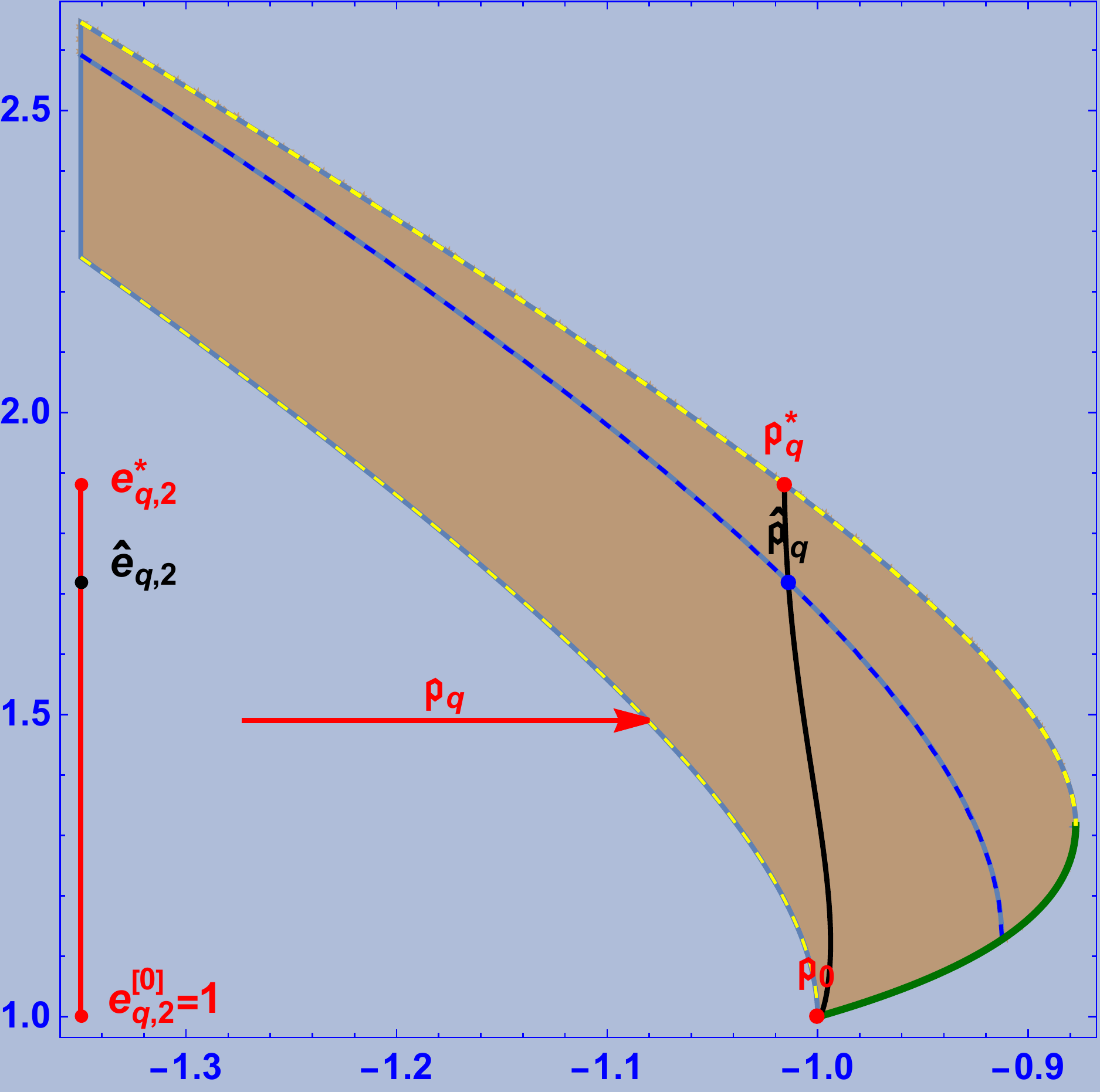}}
	\caption{\small{Left: The plot of the vector field $\vec{P}=(\partial_{e_2}{\mathcal P},-\partial_{\lambda}{\mathcal P})$. Center: The fibers ${\mathcal T}_q$, for several values of $q$. Right: The fiber ${\mathcal T}_{11/10}$.}}\label{FIG21}
\end{figure}

\subsection{Isomonodromic Families} \label{isomfam} Let $q=m/n$ be a rational number.  Then, ${\mathcal G}_q=\{\gamma_{\mathfrak p}\}_{{\mathfrak p}\in {\mathcal T}_q}$ is a $1$-parameter family of BT-strings,  the \emph{isomonodromic family} with characteristic number $q$. We list some salient geometric properties  of the  isomonodromic families:
\begin{enumerate}
\item If $e_2\to 1^{+}$,  $|[\gamma_{{\mathfrak p}_q(e_2)}]|$  tends to the ideal boundary of ${\rm D}^2$ (see the first image of Figure \ref{FIG22}).
\item If $e_2\to e^{*}_{q,2}\,^{-}$,  $|[\gamma_{{\mathfrak p}_q(e_2)}]|$ tends to the circle ${\mathcal C}_q$, centered at the origin, with radius 
$$r_q=\frac{1}{e^{*}_{q,2}+\sqrt{(e^{*}_{q,2})^4-1}}\,.$$
(See the last image of Figure \ref{FIG22}.)
\item The trajectory $|[\gamma_{{\mathfrak p}_q(e_2)}]|$ passes through the origin if and only if $e_2=\hat{e}_{q,2}$ (see the curve depicted on the fourth image of Figure \ref{FIG22}).
\item The BT-strings are never simple, however their multiple points are admissible in the sense that the tangent vectors at those points are not equal (for details see Figure \ref{FIG23}).
\item The BT-strings of the isomonodromic family ${\mathcal G}_q$ have the same stabilizer, namely, the group ${\rm R}_n$ of order $n$ generated by the rotation of an angle $2\pi m/n$ around the origin. Since $\kappa$ is even, the strings are also invariant by the reflection with respect to the horizontal axis. If $n=1$, the reflection is the unique residual symmetry. In particular, $n$ is the wave number of the string of the isomonodromic family.
\item The number $m$ is the hyperbolic turning number of the strings belonging to ${\mathcal G}_q$. In other words, $m$ is the homotopy class of the Frenet frame in $\O(1,2)$, identified as ${\rm D}^2\times\mathbb{S}^1$ \cite{Ch,Re}. If $e_2\in (1,\hat{e}_{q,2})$, $m-n$ is the homotopy class of  $\gamma_{{\mathfrak p}_q(e_2)}$ in the punctured disk $\dot{{\rm D}}^2={\rm D}^2\setminus \{(0,0)\}$  (with the obvious identification $\pi_1(\dot{{\rm D}}^2)\cong \Z$). If  $e_2\in (\hat{e}_{q,2},e^{*}_{q,2})$,  $m$ is the homotopy class of $\gamma_{{\mathfrak p}_q(e_2)}$ in $\dot{{\rm D}}^2$.
\item The function  ${\omega}_q : e_2\in \tilde{{\rm J}}_q  \to \omega_{\gamma_{{\mathfrak p}_q(e_2)}}\in \R$ is strictly decreasing and satisfies
$$\lim_{e_2\to 1^+ }\omega_q(e_2) =+\infty\,,\quad\quad  \lim_{e_2\to e^{*}_{q,2}\,^-}\omega_q(e_2)=\frac{4q\pi r_q}{1-r_q^2}\,.$$
\end{enumerate}

\begin{remark} \emph{From (5), (6) and (7) it follows that BT-strings are characterized by three global geometric invariants: the wave number (ie. the order of the symmetry group),  the hyperbolic turning number and the length.}
\end{remark}

\begin{figure}[h]
	\makebox[\textwidth][c]{
		\includegraphics[height=4cm,width=4cm]{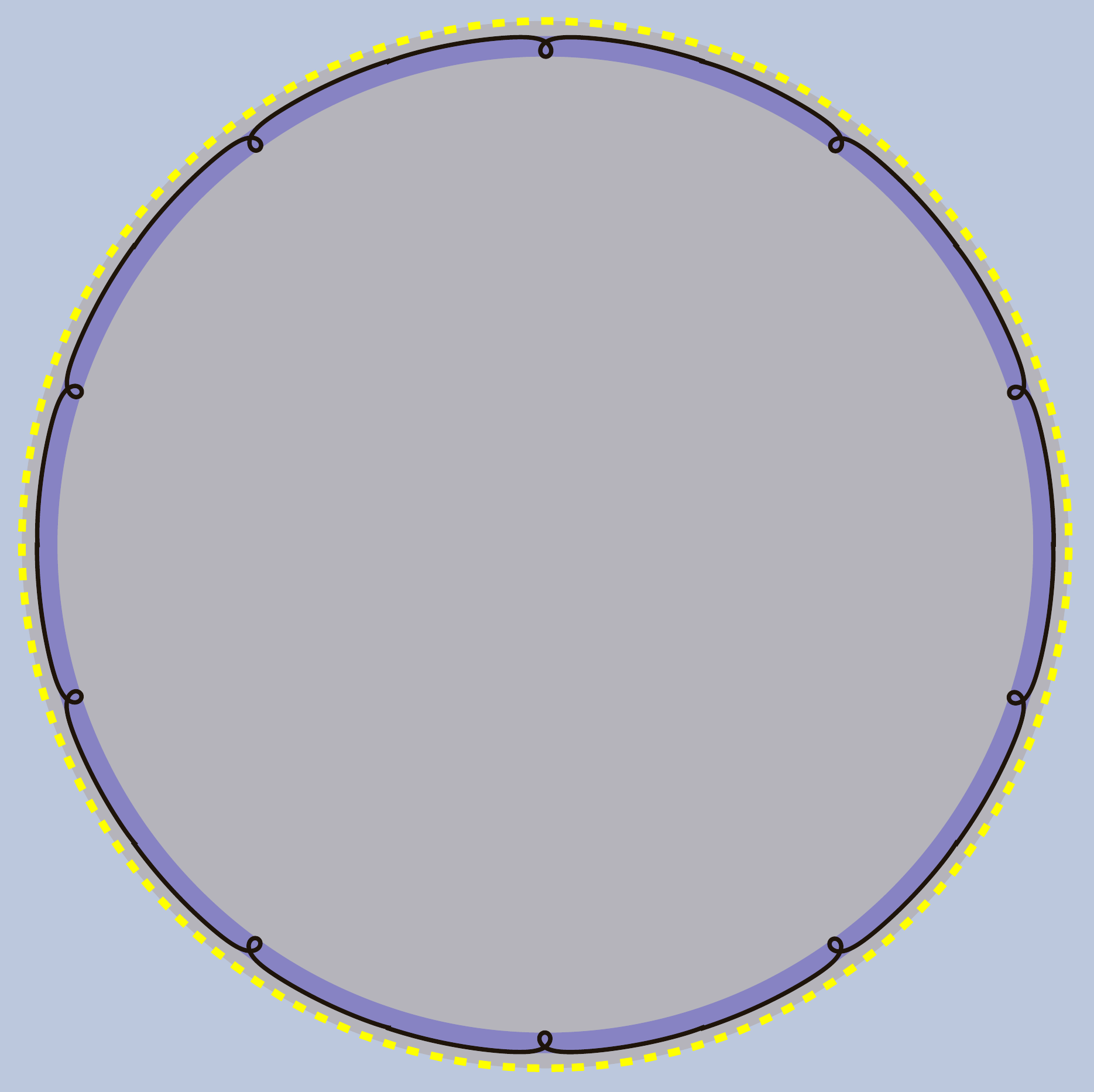}\quad
		\includegraphics[height=4cm,width=4cm]{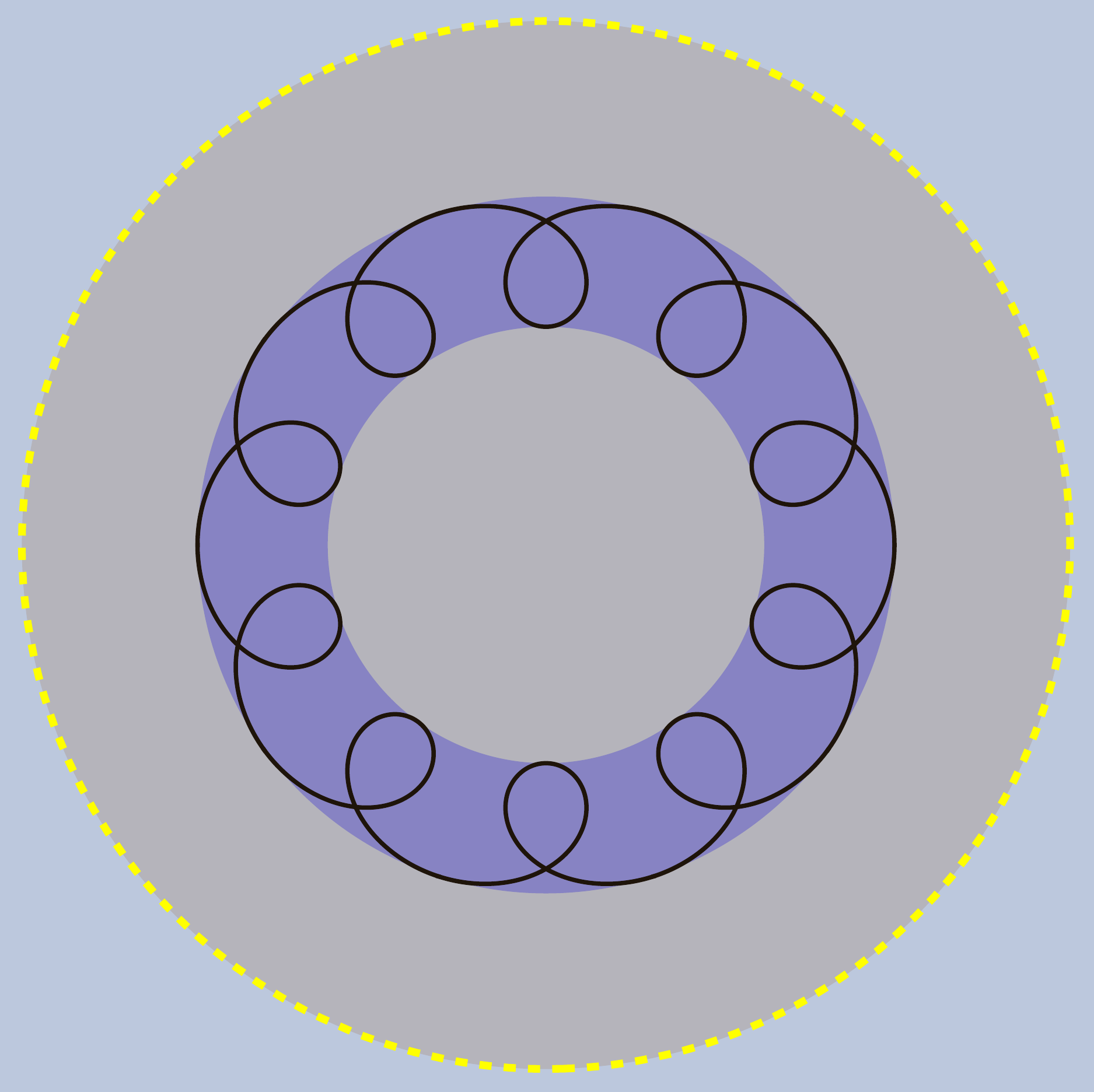}\quad
		\includegraphics[height=4cm,width=4cm]{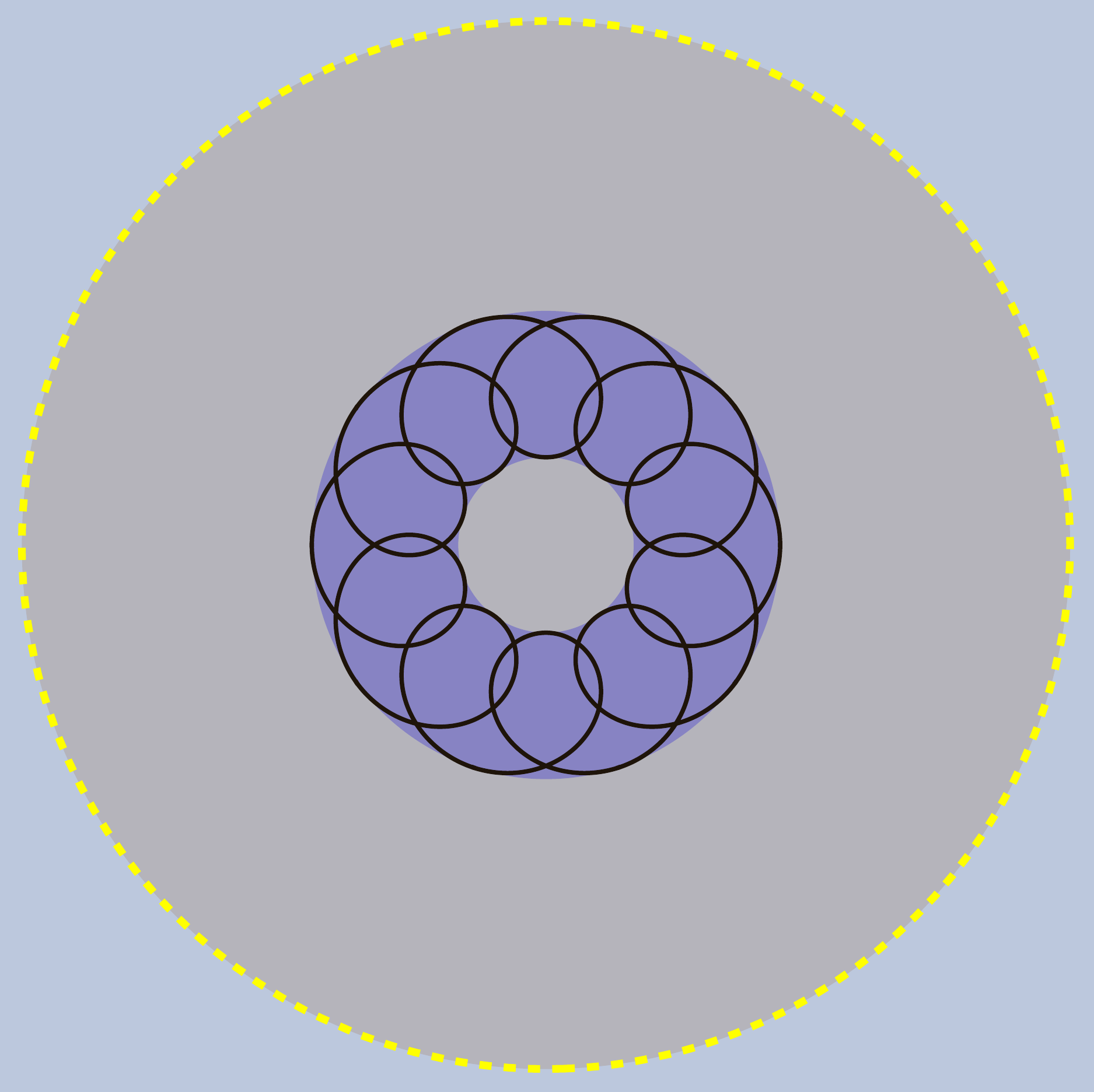}}\\\vspace{0.3cm}
	\makebox[\textwidth][c]{
		\hspace{0.01cm}	\includegraphics[height=4cm,width=4cm]{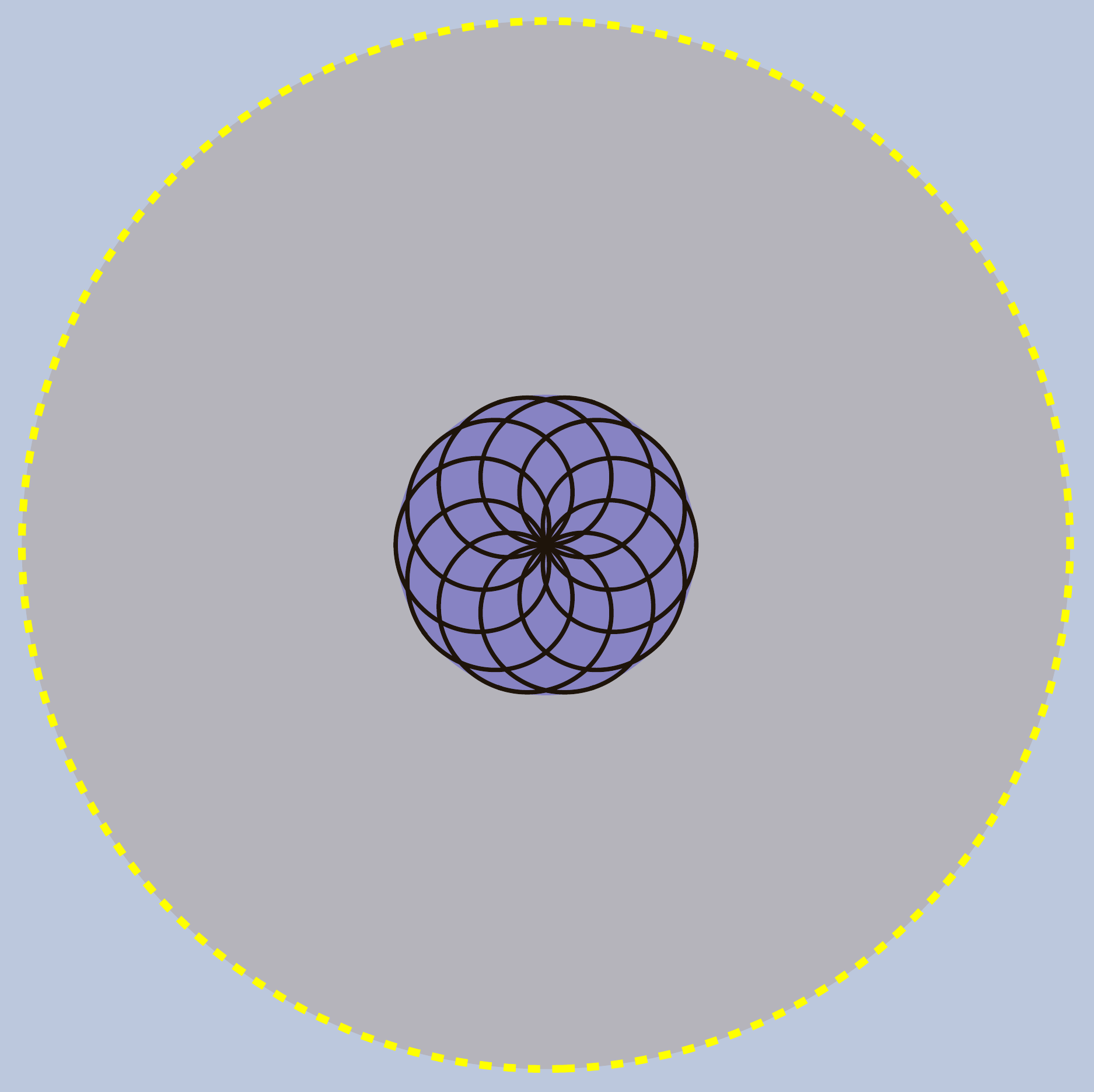}\quad
		\includegraphics[height=4cm,width=4cm]{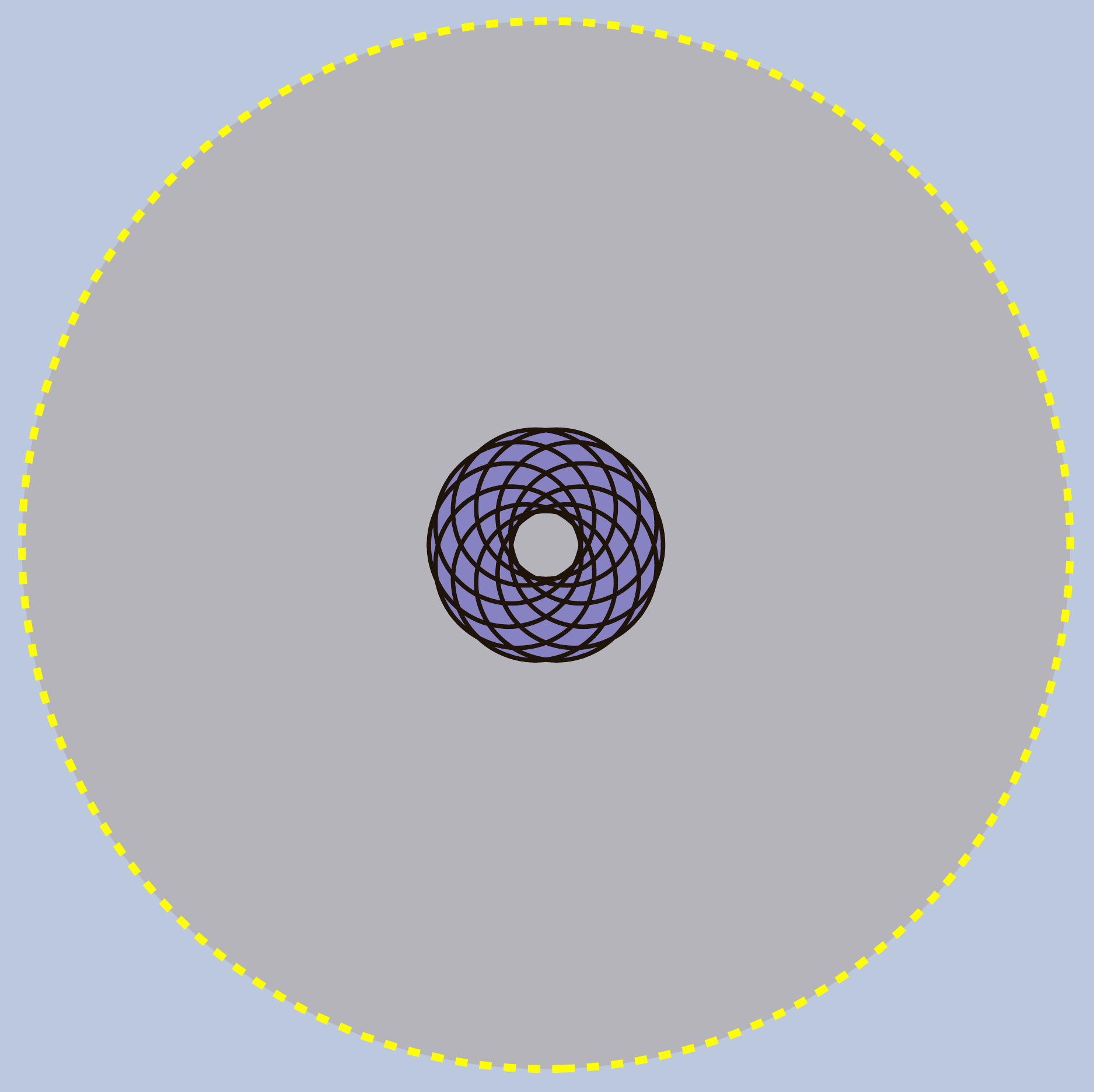}\quad
		\includegraphics[height=4cm,width=4cm]{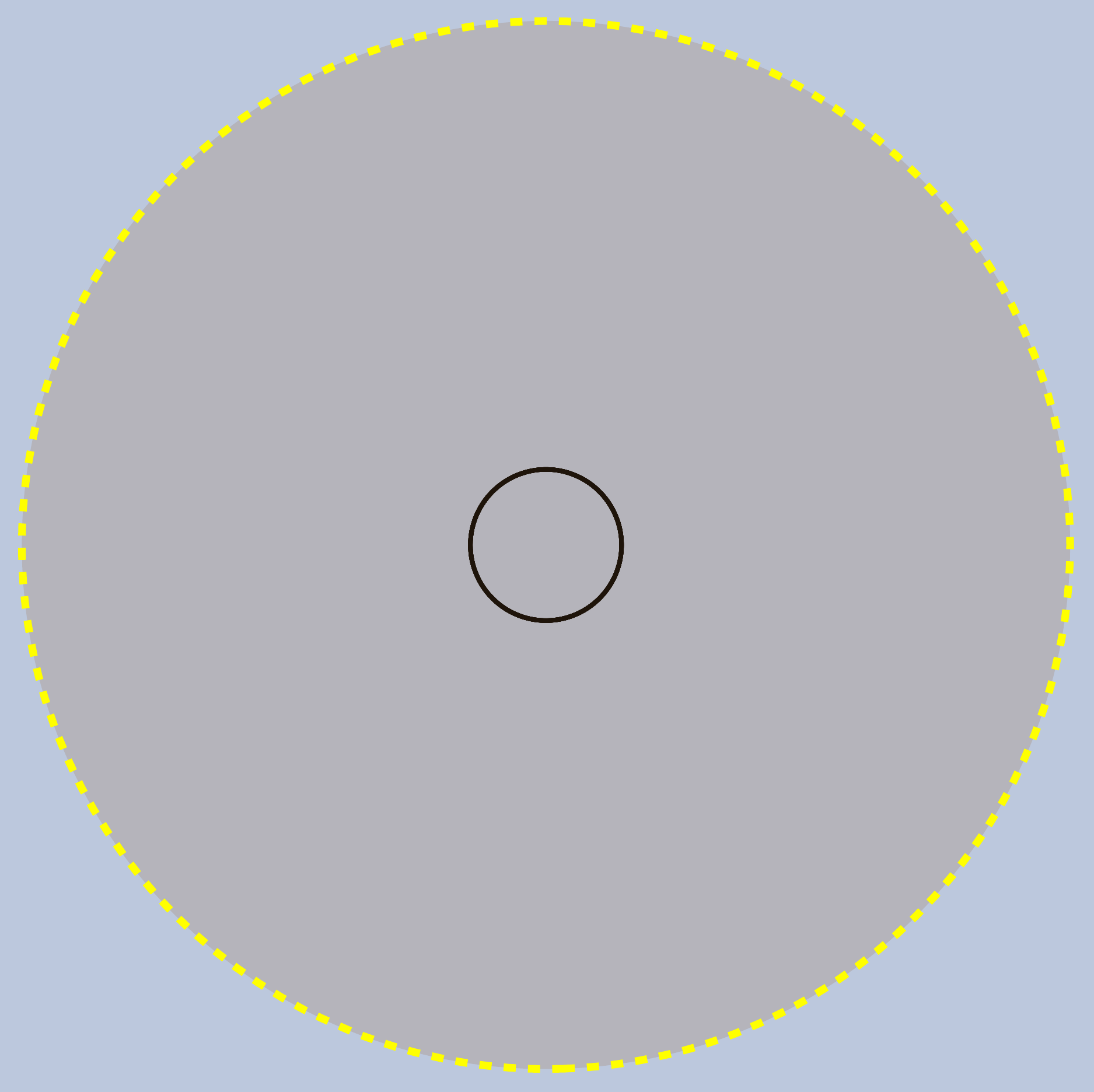}
	}
	\caption{\small{Trajectories of the BT-strings  $|[\gamma_{{\mathfrak p}_q(e_2)}]|$ with $q=11/10$ and $e_2$ increasing in $\tilde{{\rm J}}_{11/10}$.}}\label{FIG22}
\end{figure}

Figure \ref{FIG22} reproduces the trajectories of BT-strings with characteristic number $q=11/10$. The parameter $e_2$ varies in the interval $\tilde{{\rm J}}_{11/10}=(1,e^*_{11/10,2})$, where $e^*_{11/10,2}\approx 1.8812$.  The exceptional value $\hat{e}_{11/10,2}$ is approximately $1.71966$. The curves of Figure \ref{FIG22} correspond to $e_2\approx 1$, $e_2=1.26$, $e_2=1.53$, $e_2=\hat{e}_{11/10,2}$, $e_2=1.79$, respectively, while the last one is the circle $\mathcal{C}_{11/10}$. The BT-strings of the family have a tenth-fold rotational symmetry. If $e_2<\hat{e}_{11/10,2}$ the homotopy class in the punctured disk is $1$ while, if  $e_2>\hat{e}_{11/10,2}$ the homotopy class is $11$.

\begin{remark} \emph{As $e_2$ increases in the interval $\tilde{\rm J}_q$, the curves in the isomonodromic family ${\mathcal G}_q$ pass through different isotopy classes. More precisely, let $j[q]$, $q=m/n$, be defined by 
	\begin{equation*}
		j[q]=(n+{\rm mod}(n,2))/2-\delta_{1,n}\,,
	\end{equation*}
where $\delta_{1,n}$ is the Kronecker delta. There are $2 j[q]+1$ distinct isototopy classes of BT-strings in the isomonodromic family ${\mathcal G}_q$.}
\end{remark}

\begin{figure}[h]
	\makebox[\textwidth][c]{
		\includegraphics[height=4cm,width=4cm]{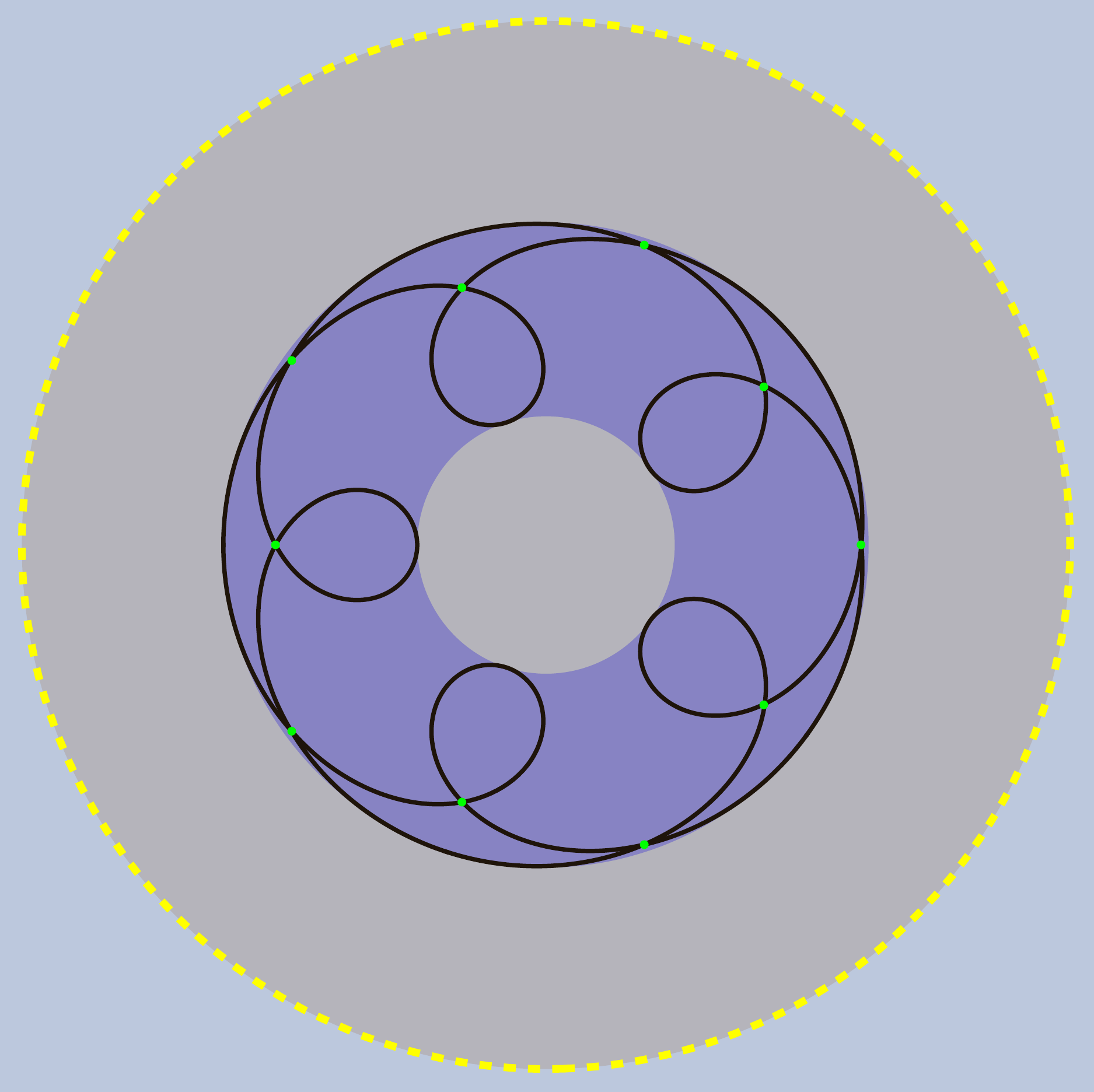}\quad
		\includegraphics[height=4cm,width=4cm]{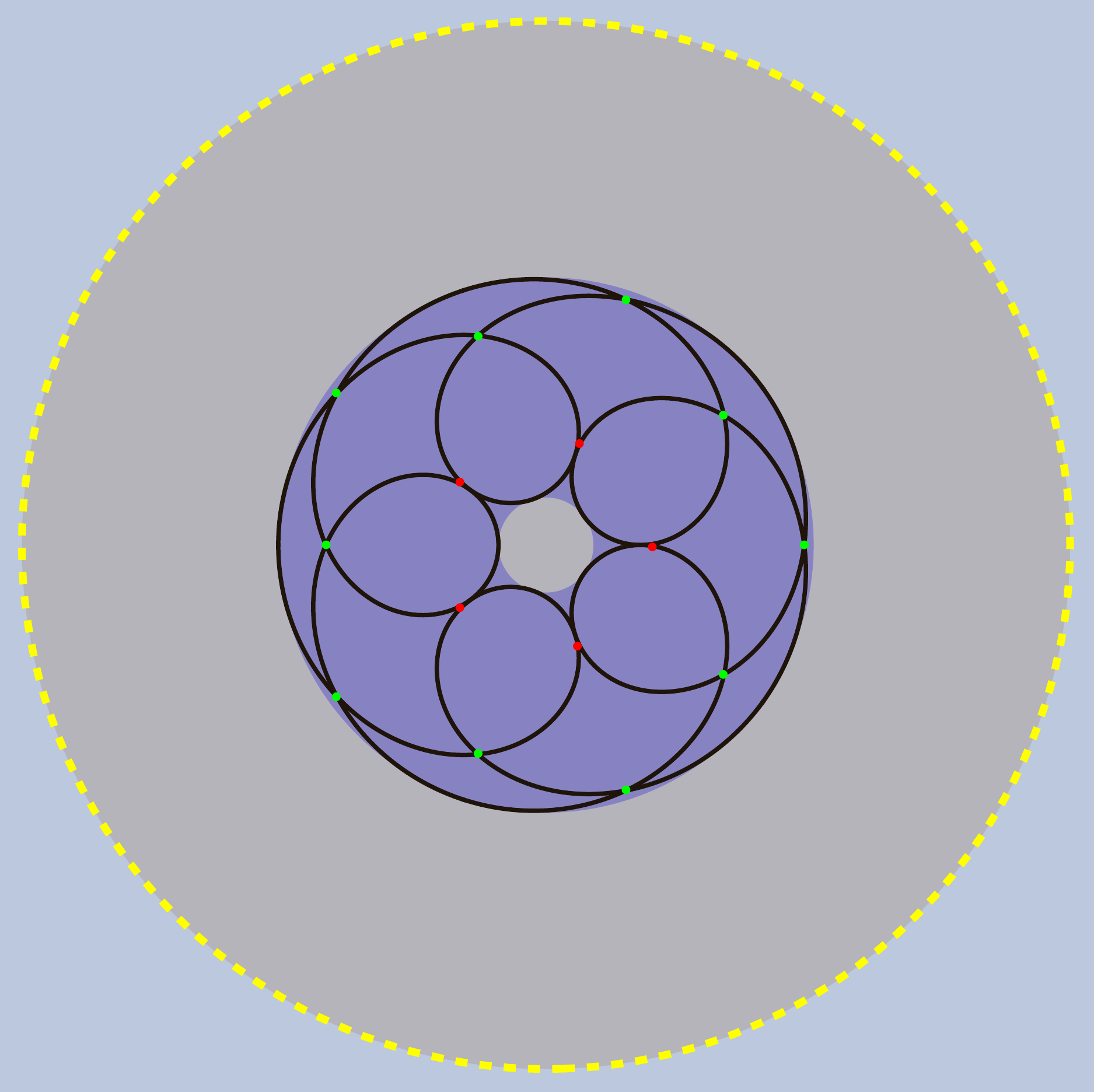}\quad
		\includegraphics[height=4cm,width=4cm]{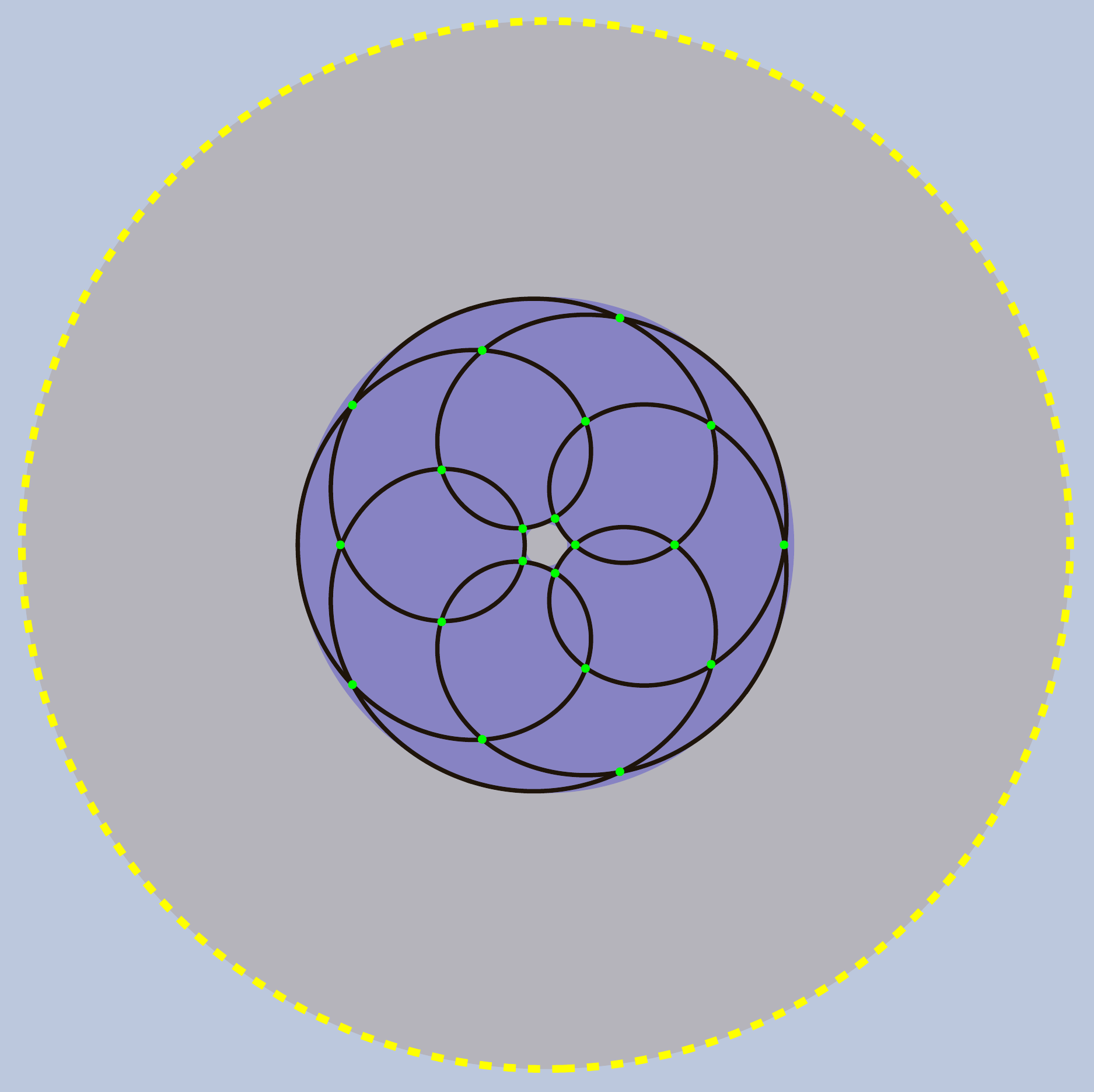}\quad
		\includegraphics[height=4cm,width=4cm]{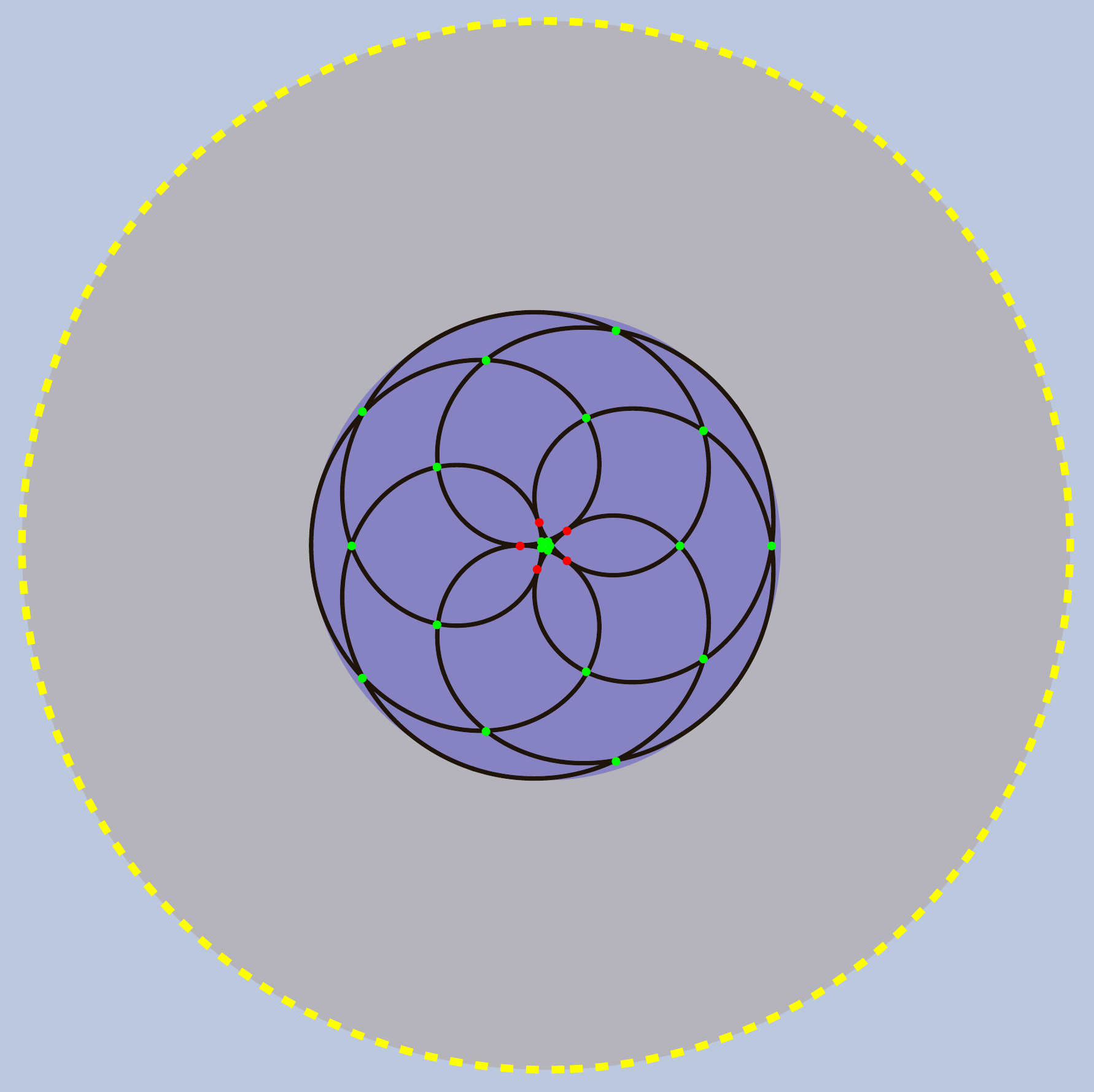}}\\\vspace{0.3cm}
	\makebox[\textwidth][c]{
		\includegraphics[height=4cm,width=4cm]{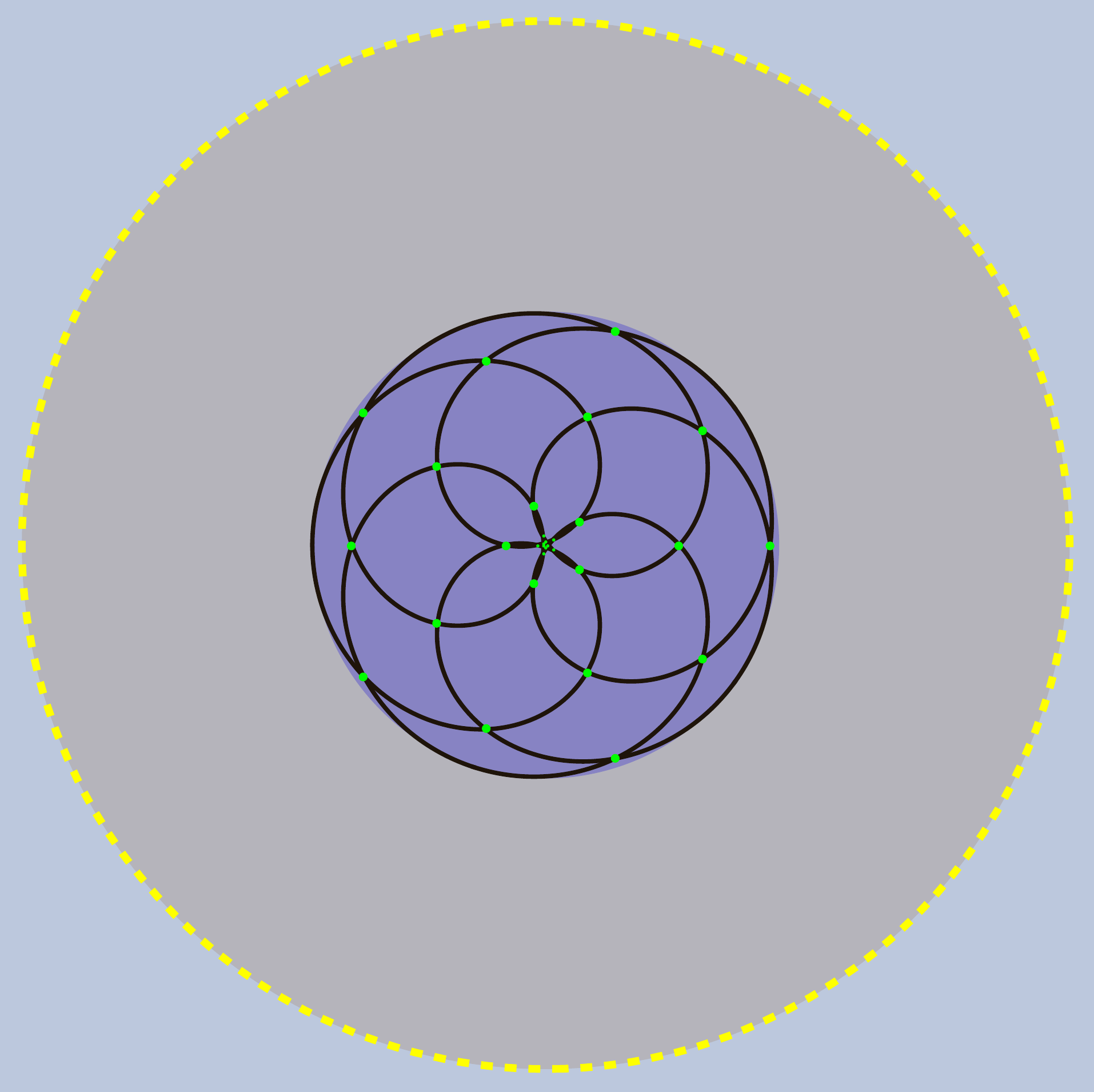}\quad
		\includegraphics[height=4cm,width=4cm]{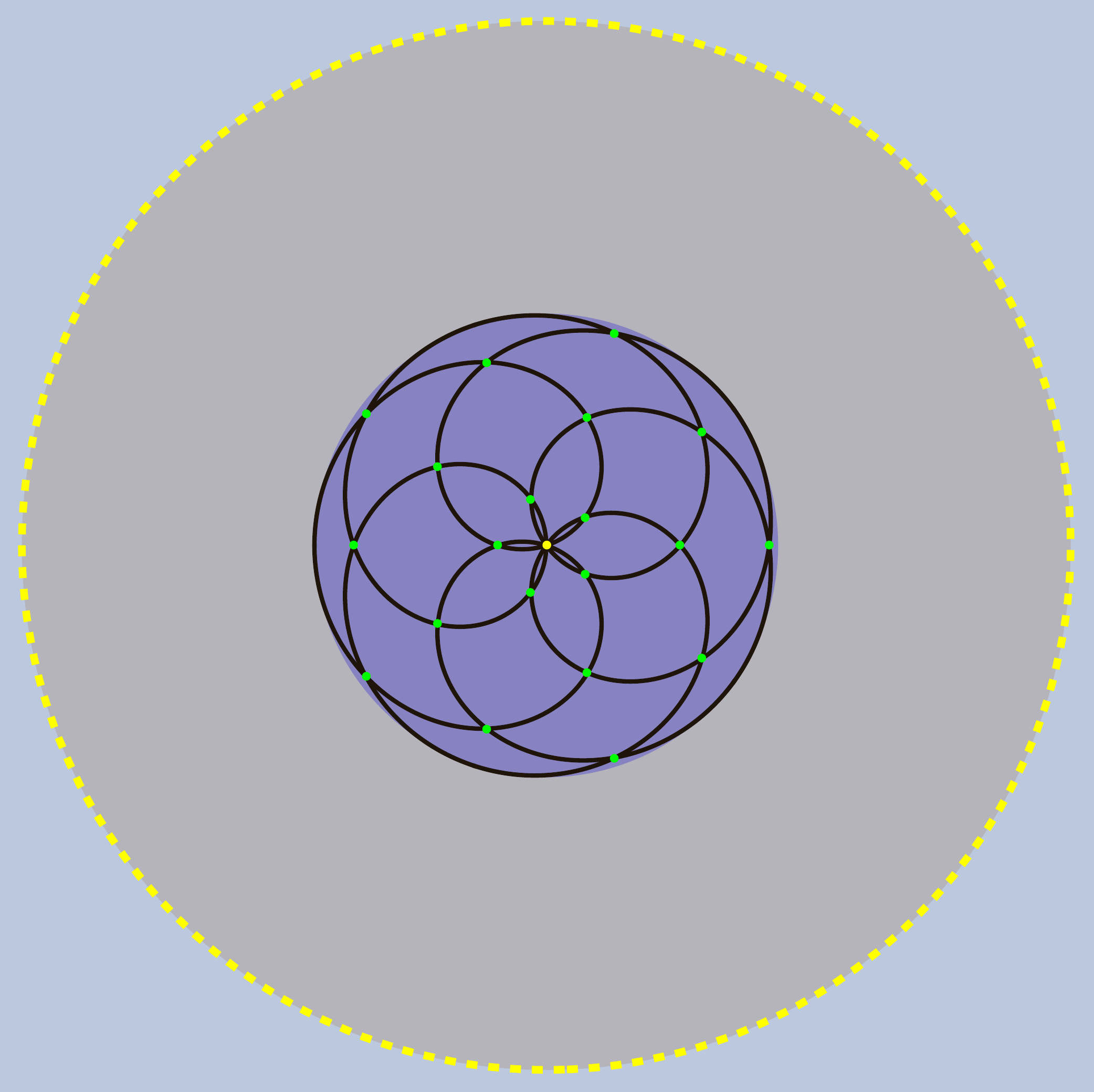}\quad
		\includegraphics[height=4cm,width=4cm]{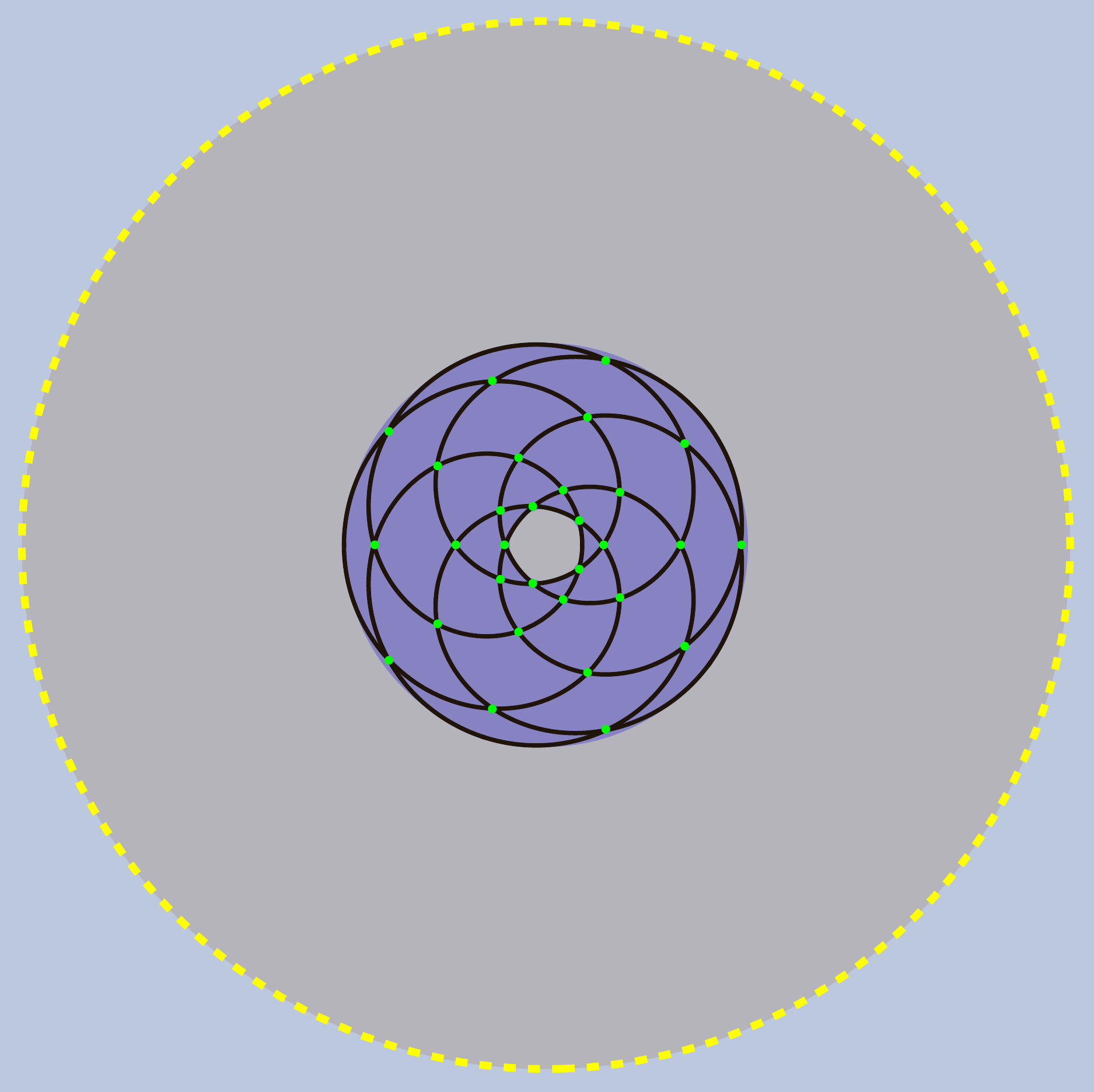}}
	\caption{\small{Trajectories of the BT-strings  $|[\gamma_{{\mathfrak p}_q(e_2)}]|$ with $q=7/5$ and $e_2$ increasing in $\tilde{{\rm J}}_{7/5}$. Ordinary double points are represented by the green dots, tangential admissible double points are the red dots and the multiple point of multiplicity bigger than two is shown in yellow.}}\label{FIG23}
\end{figure}  

The curves depicted in Figure  \ref{FIG23} are the representatives of the seven isotopy classes of strings in the isomonodromic family ${\mathcal G}_{7/5}$. In the same figure, the multiple points of the BT-strings are also illustrated.

\begin{figure}[h]
	\begin{center}
		\includegraphics[height=4cm,width=4cm]{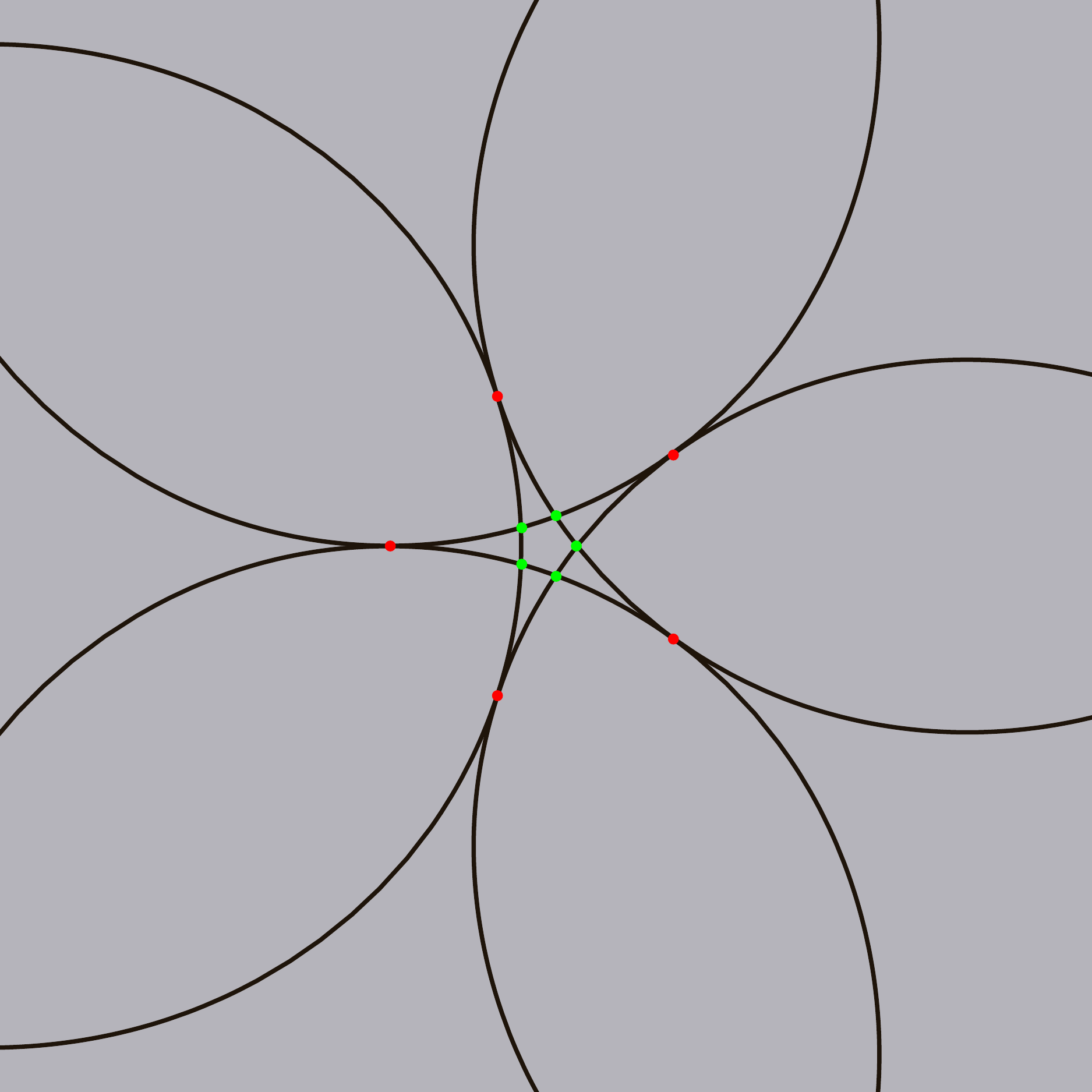}\quad\quad\quad\quad
		\includegraphics[height=4cm,width=4cm]{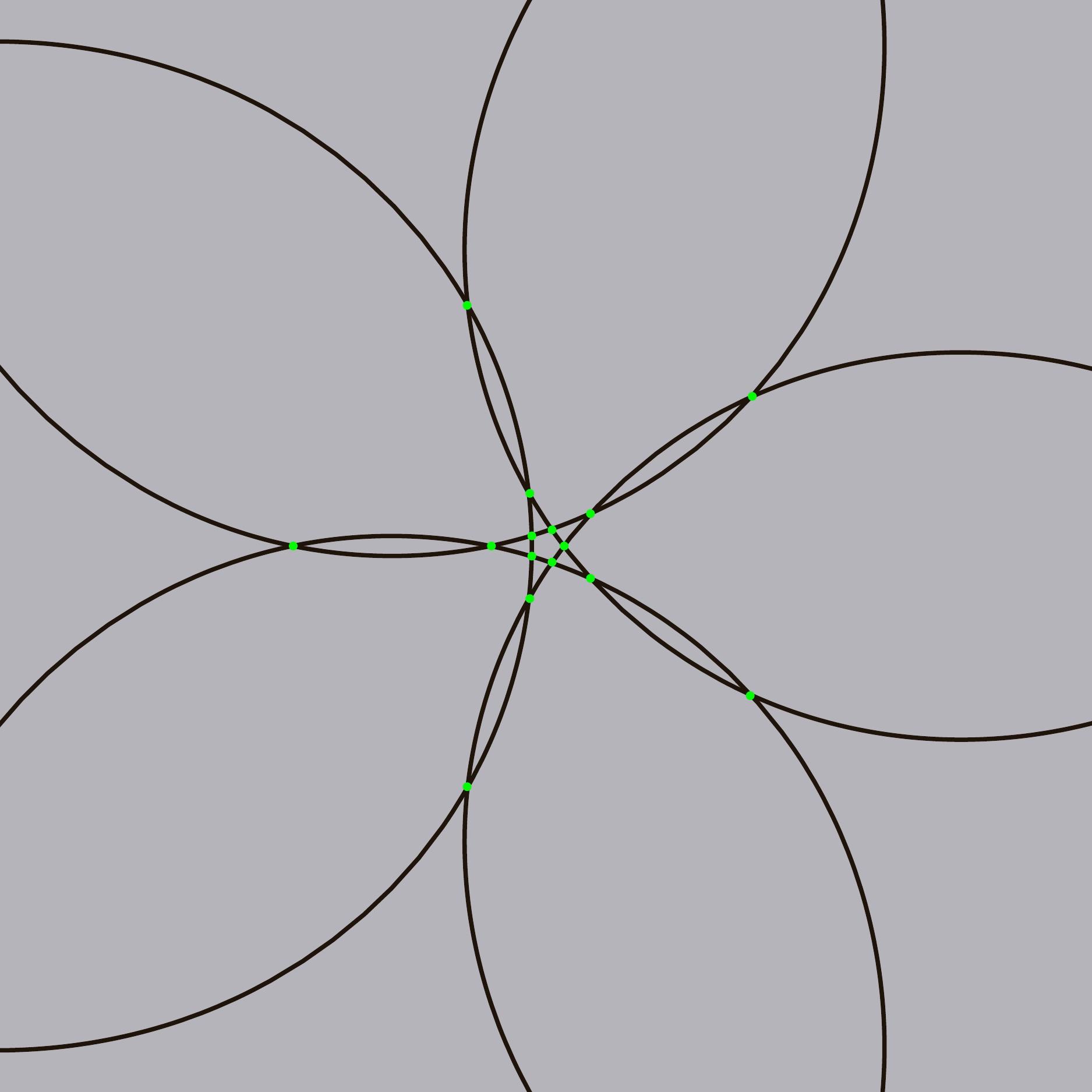}
		\caption{\small{Amplifications of the fourth and fifth images of Figure \ref{FIG23}, respectively.}}\label{FIG29}
	\end{center}
\end{figure}

\section{Appendix. Proof of Theorem \ref{existence}}

In this appendix, we will prove Theorem \ref{existence}. As mentioned in Section 4, the proof follows from the claim with respect to the limits of the period map $\mathcal{P}_\lambda$. In turn, the claim follows from five lemmas:

\begin{lemma} If $\lambda \in [-\sqrt[4]{\varphi^5}/2,-2/\sqrt[4]{27})$ where $\varphi=(1+\sqrt{5})/2$ is the golden ratio, the period map ${\mathcal P}_{\lambda}$ is real-analytic.
\end{lemma}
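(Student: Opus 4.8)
The plan is to exploit the special position of the parameter range. The key structural observation is that the exceptional locus ${\mathcal E}$ is the graph of the function $c$ over $(-\infty,-\sqrt[4]{\varphi^5}/2)$ (Remark with \eqref{c}); consequently, for $\lambda\in[-\sqrt[4]{\varphi^5}/2,-2/\sqrt[4]{27})$ the entire fibre $\{\lambda\}\times {\rm I}_{\lambda}$, with ${\rm I}_{\lambda}=(a(\lambda),\eta_+(\lambda))$, is contained in ${\mathcal T}_+$ and meets neither ${\mathcal E}$ nor ${\mathcal T}_-$. Hence the period map is given there by the single branch
\[
{\mathcal P}_{\lambda}(e_2)=-\frac{1}{2\pi}\,\Theta_{\mathfrak p}(\omega_{\mathfrak p}),\qquad {\mathfrak p}=(\lambda,e_2),
\]
and no gluing across the different pieces of the piecewise definition of ${\mathcal P}$ is required. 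This is precisely the reason for restricting $\lambda$ to this interval.

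First I would convert $\Theta_{\mathfrak p}(\omega_{\mathfrak p})$ into a complete integral over $[e_2,e_1]$. Since the integrand of the angular function on ${\mathcal T}_+$ depends on $s$ only through $\mu_{\mathfrak p}$, using $\dot{\mu}_{\mathfrak p}^2=-\mu_{\mathfrak p}^2 Q_{\mathfrak p}(\mu_{\mathfrak p})$ and the monotonicity of $\mu_{\mathfrak p}$ on $[0,\omega_{\mathfrak p}/2]$ one obtains
\[
\Theta_{\mathfrak p}(\omega_{\mathfrak p})=4\sqrt{|c|}\int_{e_2}^{e_1}\frac{\mu\,(\mu+2\lambda)}{(1+4c\mu^2)\sqrt{-Q_{\mathfrak p}(\mu)}}\,d\mu.
\]
The factor $1+4c\mu^2$ is the crucial point: on ${\mathcal T}_+$ one has $1+4ce_1^2>0$, and since $c<0$ this forces $1+4c\mu^2\ge 1+4ce_1^2>0$ for every $\mu\in[e_2,e_1]$; thus the two real poles $\pm 1/(2\sqrt{|c|})$ of the rational factor lie strictly outside the interval of integration. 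It is exactly on ${\mathcal E}$ that the pole $1/(2\sqrt{|c|})$ collides with the endpoint $e_1$, which is why that locus must be avoided.

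Next I would establish real-analyticity of this complete integral in $e_2$. The roots $e_1>e_2>e_3>0>e_4$ and the constant $c$ are real-analytic functions of $(\lambda,e_2)$ on ${\mathfrak P}$ (by Cardano's formula \eqref{e1} and the relations \eqref{rele1e2Lc}), and on the open fibre ${\rm I}_{\lambda}$ they remain pairwise distinct. To remove the square-root singularities at the moving endpoints I would use the substitution $\mu=\tfrac{1}{2}(e_1+e_2)+\tfrac{1}{2}(e_1-e_2)\sin\theta$, $\theta\in[-\tfrac{\pi}{2},\tfrac{\pi}{2}]$, under which $(e_1-\mu)(\mu-e_2)=\bigl(\tfrac{e_1-e_2}{2}\bigr)^2\cos^2\theta$ and the offending factor cancels, giving
\[
\Theta_{\mathfrak p}(\omega_{\mathfrak p})=4\sqrt{|c|}\int_{-\pi/2}^{\pi/2}\frac{\mu(\theta)\,(\mu(\theta)+2\lambda)}{\bigl(1+4c\,\mu(\theta)^2\bigr)\sqrt{(\mu(\theta)-e_3)(\mu(\theta)-e_4)}}\,d\theta.
\]
Now the integration is over a fixed compact interval, and the integrand is a composition of real-analytic functions of $(\theta,e_2)$: the map $e_2\mapsto(e_1,e_3,e_4,c)$ is real-analytic, $\mu(\theta)$ is polynomial in these data and in $\sin\theta$, while $(\mu-e_3)(\mu-e_4)>0$ and $1+4c\mu^2>0$ throughout the range of integration. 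An integral over a fixed compact interval of a jointly real-analytic integrand is real-analytic in the parameter (complexify $e_2$ in a neighbourhood, observe that the integrand stays holomorphic, and conclude by Morera's theorem together with differentiation under the integral sign). This yields real-analyticity of $\Theta_{\mathfrak p}(\omega_{\mathfrak p})$, and hence of ${\mathcal P}_{\lambda}$.

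The main obstacle is this last step: controlling the endpoint singularities while the endpoints themselves vary analytically with the parameter. The $\sin\theta$ substitution is the device that simultaneously fixes the domain and renders the integrand real-analytic; everything else is bookkeeping that rests on the earlier real-analytic dependence of the roots on $(\lambda,e_2)$ and on the ${\mathcal T}_+$ inequality $1+4ce_1^2>0$ keeping the poles of the integrand away from $[e_2,e_1]$. Alternatively, one may invoke the Byrd--Friedman reductions already used for $\omega_{\mathfrak p}$ in \eqref{bcwlpp2} and for $h_{\mathfrak p}$ in \eqref{hfunction} to write $\Theta_{\mathfrak p}(\omega_{\mathfrak p})$ explicitly in terms of the complete elliptic integrals ${\rm K}$ and $\Pi$ with real-analytic moduli and characteristic; here the condition $1+4ce_1^2>0$ guarantees that the characteristic of $\Pi$ stays in its regular range, so that ${\mathcal P}_{\lambda}$ is again seen to be a real-analytic function of $e_2$.
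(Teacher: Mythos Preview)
Your argument is correct. The key structural insight---that for $\lambda\in[-\sqrt[4]{\varphi^5}/2,-2/\sqrt[4]{27})$ the whole fibre lies in ${\mathcal T}_+$, so ${\mathcal P}_\lambda$ is given by a single formula and the pole $1/(2\sqrt{|c|})$ of the angular integrand stays to the right of $e_1$---is exactly what drives both your proof and the paper's. Your $\sin\theta$ substitution cleanly fixes the domain of integration and removes the endpoint singularities, after which real-analyticity in $e_2$ follows from the standard parameter-dependence argument you describe.

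The paper takes the route you list as an alternative: it performs a partial-fractions decomposition of the rational factor and reduces the integral, via Byrd--Friedman identities, to the closed form
\[
{\mathfrak I}_{\mathfrak p}=\frac{2\sqrt{|c|}}{\pi}\bigl({\mathtt A}\,{\rm K}({\mathtt m})+{\mathtt B}\,\Pi({\mathtt n}_1,{\mathtt m})+{\mathtt C}\,\Pi({\mathtt n}_2,{\mathtt m})\bigr),
\]
with explicitly written coefficients ${\mathtt A},{\mathtt B},{\mathtt C}$ and characteristics ${\mathtt n}_1,{\mathtt n}_2$. The observation that ${\mathtt B}$ and ${\mathtt n}_1$ blow up precisely on ${\mathcal E}$ then replaces your pole argument. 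The two approaches are logically equivalent here; yours is more elementary and self-contained for this lemma. The advantage of the paper's explicit elliptic-integral representation is that it is reused verbatim in all the subsequent lemmas of the appendix, where the asymptotics of ${\rm K}$ and $\Pi$ as ${\mathtt m}\to 1^-$ or ${\mathtt n}_1\to-\infty$ are needed to compute the boundary limits of ${\mathcal P}_\lambda$. So the paper front-loads the special-function machinery because it needs the formula anyway; your direct approach is cleaner if one only cares about analyticity on this particular interval.
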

\begin{proof}
Let ${\mathtt g}$ and ${\mathtt m}$ be as in (\ref{ellipticmoduli}). Put
\begin{equation}\label{coeff}
	\begin{cases}
		(i)&{\mathtt n}_1=\frac{\left(1-2\sqrt{|c|}\,e_4\right)\left(e_2-e_1\right)}{\left(1-2\sqrt{|c|}\,e_1\right)\left(e_2-e_4\right)}\,,\\
		(ii)&{\mathtt n}_2=\frac{\left(1+2\sqrt{| c|}\,e_4\right)\left(e_2-e_1\right)}{\left(1+2\sqrt{|c|}\,e_1\right)\left(e_2-e_4\right)}\,,\\
		(iii)&{\mathtt A}=-\frac{{\mathtt g}e_4\left(e_4+2\lambda\right)}{1+4ce_4^2}\,\\
		(iv)&{\mathtt B}=-\frac{{\mathtt g}\left(1+4\sqrt{|c|}\,\lambda\right)\left(e_1-e_4\right)}{4\sqrt{|c|}\left(1-2\sqrt{|c|}\,e_4\right)\left(1-2\sqrt{|c|}\,e_1\right)}\,,\\
		(v)&{\mathtt C}=\frac{{\mathtt g}\left(1-4\sqrt{| c|}\,\lambda\right)\left(e_1-e_4\right)}{4\sqrt{| c|}\left(1+2\sqrt{|c|}\,e_4\right)\left(1+2\sqrt{|c|}\,e_1\right)}\,.
	\end{cases}
\end{equation}
If we fix $\lambda$, these are functions of $e_2\in {\rm I}_{\lambda}=(a(\lambda),\eta_+(\lambda))$ (see Remark \ref{4.2}),  denoted by ${\mathtt  g}_{\lambda}$, ${\mathtt m}_{\lambda}$, ${\mathtt n}_{1,\lambda}$, ${\mathtt n}_{2,\lambda}$, ${\mathtt A}_\lambda$, ${\mathtt B}_\lambda$ and ${\mathtt C}_\lambda$, respectively. In view of (\ref{eqexlc}), ${\mathtt B}_\lambda$ and ${\mathtt n}_{1,\lambda}$ are defined on ${\mathcal T}\setminus {\mathcal E}$.  The functions
${\mathtt g}_{\lambda},{\mathtt m}_{\lambda},{\mathtt n}_{2,\lambda},{\mathtt A}_{\lambda}$ and ${\mathtt C}_{\lambda}$ are real-analytic.  If $\lambda \in [-\sqrt[4]{\varphi^5}/2,-2/\sqrt[4]{27})$ also
${\mathtt n}_{1,\lambda}$ and ${\mathtt B}_{\lambda}$ are real-analytic.  Instead,  if $\lambda<-\sqrt[4]{\varphi^5}/2$,   ${\mathtt n}_{1,\lambda}$ and ${\mathtt B}_{\lambda}$ are real-analytic on ${\rm I}_{\lambda}\setminus \{c(\lambda)\}$ and
\[ \lim_{e_2\to c(\lambda)^{\pm}} {\mathtt n}_{1,\lambda}(e_2) = -\infty,\quad  \lim_{e_2\to c(\lambda)^{\pm}} {\mathtt B}_{\lambda}(e_2) =\pm \infty. \]
Recall that $c(\lambda)$ is defined in \eqref{c} and its graph represents the exceptional locus $\mathcal{E}$. 

If $\mathfrak{p}=(\lambda,e_2)\notin {\mathcal E}$, then
\[\begin{split}
	&- \frac{1}{2\pi} \Theta_{\mathfrak{p}}(\omega_{\mathfrak{p}})= 
	-\frac{\sqrt{|c|\,}}{\pi} \int_0^{\omega_{\mathfrak{p}}} \frac{\mu_\mathfrak{p}^2(\mu_\mathfrak{p}+2\lambda)}{1+4 c\mu_\mathfrak{p}^2}\,ds=\\
	& =-\frac{2\sqrt{|c|\,}}{\pi}\int_{e_2}^{e1}\frac{\mu\left(\mu+2\lambda\right)}{(1+4c\mu^2)\sqrt{-(\mu-e_1)(\mu-e_2)(\mu-e_3)(\mu-e_4)}}\,d\mu\,.
\end{split}
\]
The last integral, denoted by ${{\mathfrak  I}}_{\mathfrak{p}}$, can be written as
\begin{equation}\label{general}
	{\mathfrak I}_{\mathfrak{p}}=\frac{2\sqrt{|c|\,}}{\pi}\left({\mathtt A}{\rm K}({\mathtt m}) +{\mathtt B}\Pi({\mathtt n}_1,{\mathtt m})+{\mathtt C}\Pi({\mathtt n}_2,{\mathtt m}) \right).
\end{equation}
The map ${\mathfrak I}:\mathfrak{p}=(\lambda,e_2)\in {\mathcal T}\setminus {\mathcal E}\to {\mathfrak I}_{\mathfrak{p}}$ is real-analytic.  

Let $\mathfrak{p}=(\lambda,e_2)\in {\mathcal E}$ (ie. $e_2=c(\lambda)$),  then
\[\begin{split}
	& - \frac{1}{2 \pi} \Theta_{\mathfrak{p}}(\omega_{\mathfrak{p}})= \frac{4\sqrt{|c|\,}\lambda^2}{\pi}\int_0^{\omega_{\mathfrak{p}}}
	\frac{\mu_{\mathfrak{p}}^2}{\mu_{\mathfrak{p}}-2\lambda}\,ds=\\
	&= \frac{8\sqrt{|c|\,}\lambda^2}{\pi}\int_{e_2}^{e_1}\frac{\mu}{(\mu-2\lambda)\sqrt{-(\mu-e_1)(\mu-e_2)(\mu-e_3)(\mu-e_4)}}\,d\mu\,.
\end{split}\]
This last integral, denoted by $\widehat{\mathfrak I}_{\mathfrak{p}}$ can be written as
\begin{equation}\label{exceptional}
	\widehat{\mathfrak I}_{\mathfrak{p}}=\frac{2\sqrt{|c|\,}}{\pi}\left({\mathtt A}{\rm K}({\mathtt m}) +{\mathtt C}\Pi({\mathtt n}_2,{\mathtt m}) \right).
\end{equation}

If $\lambda \in [-\sqrt[4]{\varphi^5}/2,-2/\sqrt[4]{27})$, then ${\rm I}_{\lambda} \cap {\mathcal E}=\emptyset$ and ${\mathcal P}_{\lambda}={\mathfrak I}_{\mathfrak{p}}+1$. This proves that ${\mathcal P}_{\lambda}$ is real-analytic, for every $\lambda \in [-\sqrt[4]{\varphi^5}/2,-2/\sqrt[4]{27})$.
\end{proof}

\begin{lemma} If $\lambda < -\sqrt[4]{\varphi^5}/2$, the period map ${\mathcal P}_{\lambda}$ is continuous on ${\rm I}_{\lambda}=(a(\lambda),\eta_+(\lambda))$ and real-analytic on  ${\rm I}_{\lambda}\setminus \{c(\lambda)\}$.
\end{lemma}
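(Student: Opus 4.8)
The plan is to work with the decomposition of ${\rm I}_\lambda=(a(\lambda),\eta_+(\lambda))$ induced by the single exceptional value $e_2=c(\lambda)$. Since $\lambda<-\sqrt[4]{\varphi^5}/2$ forces $a(\lambda)<c(\lambda)<\eta_+(\lambda)$, the subinterval $(a(\lambda),c(\lambda))$ parameterizes moduli in ${\mathcal T}_-$, the value $e_2=c(\lambda)$ is the unique point of ${\rm I}_\lambda\cap{\mathcal E}$, and $(c(\lambda),\eta_+(\lambda))$ parameterizes moduli in ${\mathcal T}_+$. By the definition of the period map this gives $\mathcal{P}_\lambda={\mathfrak I}_{\mathfrak p}+1$ on $(a(\lambda),c(\lambda))$, $\mathcal{P}_\lambda={\mathfrak I}_{\mathfrak p}$ on $(c(\lambda),\eta_+(\lambda))$, and $\mathcal{P}_\lambda(c(\lambda))=\widehat{\mathfrak I}_{(\lambda,c(\lambda))}+\tfrac12$. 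Thus the statement splits into two tasks: real-analyticity away from $c(\lambda)$, and continuity at $c(\lambda)$.

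The first task is immediate: on each of the two open subintervals $\mathcal{P}_\lambda$ coincides, up to an additive constant, with the restriction to the line $\{\lambda\}\times\R$ of the map ${\mathfrak I}:{\mathcal T}\setminus{\mathcal E}\to\R$, which is real-analytic by the previous lemma. Hence $\mathcal{P}_\lambda$ is real-analytic on ${\rm I}_\lambda\setminus\{c(\lambda)\}$.

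The real content is continuity at $c(\lambda)$. Writing ${\mathfrak I}_{\mathfrak p}=\tfrac{2\sqrt{|c|}}{\pi}\big({\mathtt A}{\rm K}({\mathtt m})+{\mathtt B}\,\Pi({\mathtt n}_1,{\mathtt m})+{\mathtt C}\,\Pi({\mathtt n}_2,{\mathtt m})\big)$, I observe that ${\mathtt g}_\lambda,{\mathtt m}_\lambda,{\mathtt n}_{2,\lambda},{\mathtt A}_\lambda,{\mathtt C}_\lambda$ extend real-analytically across $c(\lambda)$ and their limits reproduce exactly the two terms of $\widehat{\mathfrak I}_{(\lambda,c(\lambda))}=\tfrac{2\sqrt{|c|}}{\pi}\big({\mathtt A}{\rm K}({\mathtt m})+{\mathtt C}\,\Pi({\mathtt n}_2,{\mathtt m})\big)$. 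Consequently continuity is equivalent to the single pair of one-sided limits $\lim_{e_2\to c(\lambda)^{\pm}}\tfrac{2\sqrt{|c|}}{\pi}{\mathtt B}_\lambda\,\Pi({\mathtt n}_{1,\lambda},{\mathtt m}_\lambda)=\pm\tfrac12$: on the ${\mathcal T}_-$ side ($e_2\to c(\lambda)^-$, additive constant $1$) this forces $\mathcal{P}_\lambda\to\widehat{\mathfrak I}_{(\lambda,c(\lambda))}+\tfrac12$, and on the ${\mathcal T}_+$ side ($e_2\to c(\lambda)^+$, additive constant $0$) it forces the same value, which is precisely $\mathcal{P}_\lambda(c(\lambda))$.

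This is an $\infty\cdot0$ indeterminacy, since by the previous lemma ${\mathtt B}_\lambda\to\pm\infty$ while $\Pi({\mathtt n}_{1,\lambda},{\mathtt m}_\lambda)\to0$ as ${\mathtt n}_{1,\lambda}\to-\infty$. To resolve it I would combine the leading asymptotics $\Pi(n,{\mathtt m})=\tfrac{\pi}{2\sqrt{-n}}\,(1+o(1))$ as $n\to-\infty$ (valid since ${\mathtt m}_\lambda$ stays bounded away from $1$) with a rate count on the explicit expressions in (\ref{coeff}). The key point is that the factor $1-2\sqrt{|c|}\,e_1$ common to the denominators of ${\mathtt B}_\lambda$ and ${\mathtt n}_{1,\lambda}$ vanishes to \emph{second} order at $c(\lambda)$, whereas the numerator factor $1+4\sqrt{|c|}\,\lambda$ of ${\mathtt B}_\lambda$ vanishes only to first order: indeed $r'_{\mathfrak p}=\sqrt{1+4ce_1^2}/(1+2\sqrt{|c|}\,e_1)\ge0$ is non-negative throughout ${\mathcal T}$ and vanishes exactly on ${\mathcal E}$, so $1+4ce_1^2$, and hence $1-2\sqrt{|c|}\,e_1$, attains an interior minimum equal to $0$ there. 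Matching these rates shows that ${\mathtt B}_\lambda/\sqrt{-{\mathtt n}_{1,\lambda}}$ has finite one-sided limits of opposite sign, so that $\tfrac{2\sqrt{|c|}}{\pi}{\mathtt B}_\lambda\,\Pi({\mathtt n}_{1,\lambda},{\mathtt m}_\lambda)$ converges to $\pm\tfrac12$. The main obstacle is exactly this final evaluation — controlling the coupled divergences of ${\mathtt B}_\lambda$ and ${\mathtt n}_{1,\lambda}$ and checking that the surviving finite constant is precisely $\tfrac12$ in absolute value; once this single indeterminate limit is computed, the equality of both one-sided limits with $\mathcal{P}_\lambda(c(\lambda))$, and hence continuity on ${\rm I}_\lambda$, is automatic.
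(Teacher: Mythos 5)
Your overall architecture coincides with the paper's: the same splitting of ${\rm I}_{\lambda}$ at the unique point $c(\lambda)$ of ${\rm I}_{\lambda}\cap{\mathcal E}$, real-analyticity on the two open pieces inherited from the previous lemma, and the reduction of continuity to the two one-sided limits $\lim_{e_2\to c(\lambda)^{\pm}}\frac{2\sqrt{|c|}}{\pi}{\mathtt B}_{\lambda}\Pi({\mathtt n}_{1,\lambda},{\mathtt m}_{\lambda})=\pm\frac{1}{2}$, attacked via the asymptotic $\Pi(n,m)\sim\pi/(2\sqrt{1-n})$ as $n\to-\infty$. Up to that point the proposal is correct and matches the paper.

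The gap is that you never actually establish the limit you reduce everything to. Two things are asserted rather than proved. First, the ``rate count'': your argument that $1+4ce_1^2$ (hence $1-2\sqrt{|c|}\,e_1$) vanishes to \emph{exactly} second order while $1+4\sqrt{|c|}\,\lambda$ vanishes to \emph{exactly} first order. The non-negativity of $1+4ce_1^2$ on ${\mathcal T}$ only gives even order $\ge 2$, and you give no argument for the first-order vanishing of $1+4\sqrt{|c|}\,\lambda$; if the orders failed to match, ${\mathtt B}_{\lambda}/\sqrt{1-{\mathtt n}_{1,\lambda}}$ would tend to $0$ or $\pm\infty$ and continuity would fail. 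The paper's Step I is precisely the verification that the orders match, done by writing $\bigl(1+4\lambda\sqrt{|c|}\bigr)/\sqrt{1-2e_1\sqrt{|c|}}$ explicitly in terms of the quantity ${\mathtt p}_{\lambda}(e_2)=(e_1^2e_2^3-e_1^3e_2^2+e_1+e_2)/(2e_1e_2^2)$, which changes sign across ${\mathcal E}$ by $(iii)$ of \eqref{eqexlc}, yielding the finite nonzero limits $\mp\sqrt{2}/e_1(c(\lambda))^2$. Second, and more importantly, the value of the surviving constant: you acknowledge that checking it equals $1/2$ is ``the main obstacle'' and leave it there. But this is the entire content of the lemma --- the offsets $+1$, $+\tfrac12$, $0$ in the definition of ${\mathcal P}$ cancel the jump of ${\mathfrak I}_{\mathfrak p}$ across ${\mathcal E}$ only if that jump is exactly $\tfrac12$ on each side; finiteness and opposite signs of the one-sided limits are not enough. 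The paper completes this by expressing $\sqrt{|c|}\,{\mathtt B}_{\lambda}/\sqrt{1-{\mathtt n}_{1,\lambda}}$ as $\widehat{\mathtt p}_{\lambda}/\widehat{\mathtt q}_{\lambda}$, feeding in the Step I limit, and checking via \eqref{rele1e2Lc} and \eqref{eqexlc} that the resulting function ${\mathtt h}_{\lambda}$ satisfies ${\mathtt h}_{\lambda}(c(\lambda))=1/2$. Your proposal identifies the right target but does not reach it.
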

\begin{proof}
Following with the notation of the previous lemma, for every $\lambda<-\sqrt[4]{\varphi^5}/2 $ we have ${\rm I}_{\lambda} \cap {\mathcal E}=\{(\lambda,c(\lambda))\}$ and
\[\begin{cases}{\mathcal P}_{\lambda}(e_2)={\mathfrak I}_{\mathfrak{p}}+1\,,\quad\quad\quad  e_2\in (a(\lambda),c(\lambda))\\
{\mathcal P}_{\lambda}(e_2)={\mathfrak I}_{\mathfrak{p}},\quad\quad\quad\quad\,\,\,\,\,   e_2\in (c(\lambda),\eta_+(\lambda))\end{cases}.\]

Then, ${\mathcal P}_{\lambda}$ is real-analytic on ${\rm I}_{\lambda}\setminus \{c(\lambda)\}$. The continuity part of the lemma  follows from the limits
\begin{equation}\label{lim1} 
	\lim_{e_2\to c(\lambda)^{-} }{\mathfrak I}_{\mathfrak{p}} = \widehat{\mathfrak I}_{\mathfrak{p}}-1/2\,,\quad\quad\quad 
	\lim_{e_2\to c(\lambda)^{+} }{\mathfrak I}_{\mathfrak{p}} = \widehat{\mathfrak I}_{\mathfrak{p}}+1/2\,.
\end{equation}
Or, equivalently, from
\begin{equation}\label{lim1bis} 
	\lim_{e_2\to c(\lambda)^{\pm} }
	\frac{2\sqrt{|c|\,}}{\pi} {\mathtt B}_{\lambda}\Pi({\mathtt n}_{1,\lambda},{\mathtt m}_{\lambda})=\pm\frac{1}{2}\,.
\end{equation}
The proof of (\ref{lim1bis}) is organized into two steps:\\
\noindent \emph{Step I}. In the first step we show that
\begin{equation}\label{lim2} 
	\lim_{e_2\to c(\lambda)^{\pm} } \frac{1+4\lambda\sqrt{|c|\,}}{\sqrt{1-2e_{\lambda,1}\sqrt{|c|\,}}}=\mp \frac{\sqrt{2}}{e_{\lambda,1}(c(\lambda))^2}\,,
\end{equation}
where $e_{\lambda,j}(e_2)=e_j(\lambda,e_2)$, $j=1,3,4$. Using $(v)$ and $(vi)$ of (\ref{rele1e2Lc}) we have
$$ \frac{1+4\lambda\sqrt{|c|\,}}{\sqrt{1-2e_{\lambda,1}\sqrt{|c|\,}}}=\frac{2e_{\lambda,1}^3e_2^2-{\mathtt q}_{\lambda}(e_2)\sqrt{1-{\mathtt p}_{\lambda}(e_2)^2}}
	{2e_{\lambda,1}^3e_2^2\sqrt{1-\sqrt{1-{\mathtt p}_{\lambda}(e_2)^2}}}\,,
	$$
	where
	$${\mathtt p}_{\lambda}(e_2)=\frac{e_{\lambda,1}^2e_2^3-e_{\lambda,1}^3e_2^2+e_{\lambda,1}+e_2}{2e_{\lambda,1}e_2^2}\,,\quad\quad\quad {\mathtt q}_{\lambda}(e_2)=(e_{\lambda,1}+e_2)(1+e_{\lambda,1}^2e_2^2)\,.
	$$
	By  $(iii)$ of Remark \ref{lemma5Modulispaces},  ${\mathtt p}_{\lambda}(e_2)$ tends to $0$ when $e_2\to c(\lambda)$, it is negative if $e_2<c(\lambda)$ and positive if $e_2>c(\lambda)$.  Then,
	\[\begin{split}& \lim_{e_2\to c(\lambda)^{\pm} }  \frac{1+4\lambda\sqrt{|c|\,}}{\sqrt{1-2e_{\lambda,1}\sqrt{|c|\,}}} =\lim_{e_2\to c(\lambda)^{\pm} } \frac{2e_{\lambda,1}^3e_2^2-{\mathtt q}_{\lambda}(e_2)\sqrt{1-{\mathtt p}_{\lambda}(e_2)^2}}
		{2e_{\lambda,1}^3e_2^2\sqrt{1-\sqrt{1-{\mathtt p}_{\lambda}(e_2)^2}}}  =\\
		&= \lim_{e_2\to c(\lambda)^{\pm} }-\frac{{\mathtt p}_{\lambda}(e_2)}{e_{\lambda,1}^2\sqrt{1-\sqrt{1-{\mathtt p}_{\lambda}(e_2)^2}}}=\mp \frac{\sqrt{2}}{e_{\lambda,1}(c(\lambda))^2}\,.
	\end{split}\]
	
\noindent \emph{Step II}. We will now prove (\ref{lim1bis}). For this purpose, recall the standard limit (see \cite{BF},  906.00,  p. 302)
	$$ \Pi(n,m)\sim \frac{\pi}{2\sqrt{1-n}}\,,\quad\,\,\, {\rm as}\,\,\,\  n\to -\infty\,, 	\quad\quad\,\,\, \forall m\in (0,1)\,.$$
	Since
	\[ \lim_{e_2\to c(\lambda)^{\pm} } {\mathtt n}_{1,\lambda}(e_2)=-\infty\,,\quad\quad 0<{\mathtt m}_{\lambda}(e_2)<1\,,\quad\quad \forall \lambda<-\sqrt[4]{\varphi^5}/2\,,
	\]
	we have
	\[\lim_{e_2\to c(\lambda)^{\pm} }
	\frac{2\sqrt{|c|\,}}{\pi} {\mathtt B}_{\lambda}\Pi({\mathtt n}_{1,\lambda},{\mathtt m}_{\lambda})=
	\lim_{e_2\to c(\lambda)^{\pm} }\frac{ \sqrt{|c|\,}{\mathtt B}_{\lambda}}{\sqrt{1-{\mathtt n}_{1,\lambda}}}\,.
	\]
	Using (\ref{coeff}) we obtain
	$$\frac{ \sqrt{|c|\,}{\mathtt B}_{\lambda}}{\sqrt{1-{\mathtt n}_{1,\lambda}}}=\frac{\widehat{\mathtt p}_{\lambda}}{\widehat{\mathtt q}_{\lambda}}\,,
	$$
	where $\widehat{\mathtt p}_{\lambda}$ is given by
	$$\widehat{\mathtt p}_\lambda=-\frac{1}{4}{\mathtt g}_{\lambda}(1+4\sqrt{|c|\,}\lambda)(e_{\lambda,1}-e_{\lambda,4})\sqrt{e_2-e_{\lambda,4}}(1-2\sqrt{|c|\,}e_{\lambda,4})^{-1}
	$$
	and $\widehat{\mathtt q}_{\lambda}$ is the square root of
	$$\widehat{\mathtt q}_\lambda^2=
	(1-2\sqrt{|c|\,}e_{\lambda,1})\left((1-2\sqrt{|c|\,}e_{\lambda,4})(e_{\lambda,1}-e_2)+(e_2-e_{\lambda,4})(1-2\sqrt{|c|\,}e_{\lambda,1})\right).
	$$
	Using (\ref{lim2}) we obtain
	$$\lim_{e_2\to c(\lambda)^{\pm} }\frac{ \sqrt{|c|\,}{\mathtt B}_{\lambda}}{\sqrt{1-{\mathtt n}_{\lambda,1}}} =
	\pm {\mathtt h}_{\lambda}(c(\lambda))\,,$$
	where
	$$
	{\mathtt h}_{\lambda}(e_2)=
	\frac{{\mathtt g}_{\lambda}(e_{\lambda,1}-e_{\lambda,4})\sqrt{e_2-e_{\lambda,4}}(1-2\sqrt{|c|\,}e_{\lambda,4})^{-1}}{2\sqrt{2}e_{\lambda,1}^2\sqrt{(e_{\lambda,1}-e_2)(1-2\sqrt{|c|\,}e_{\lambda,4})+(e_2-e_{\lambda,4})(1-2\sqrt{|c|\,}e_{\lambda,1})}}\,.
	$$
	From (\ref{ellipticmoduli}),  using $(iv)$, $(vi)$ of (\ref{rele1e2Lc}) and  $(iii)$ of (\ref{eqexlc}),  it follows that ${\mathtt h}_{\lambda}(c(\lambda))=1/2$, which proves the result.
\end{proof}

Before proceeding further, we list eight properties that will be used to prove the remaining lemmas:
\begin{enumerate}
	\item For every $\lambda<-2/\sqrt[4]{27}$, $0<{\mathtt m}_{\lambda}<1$, and
	\[\begin{split}&\lim_{e_2\to a(\lambda)^+ } {\mathtt m}_{\lambda}(e_2)=1\,,\quad\quad\quad \forall \lambda\in [-1,-2/\sqrt[4]{27})\,,\\
		&\lim_{e_2\to a(\lambda)^+ } {\mathtt m}_{\lambda}(e_2)<1\,,\quad\quad\quad \forall \lambda\in (-\infty,-1)\,.\end{split}
	\]
	Recall that ${\mathtt m}_\lambda$ is the notation of ${\mathtt m}$ \eqref{ellipticmoduli} for fixed $\lambda$.
	\item The functions ${\mathtt n}_{j,\lambda}(e_2)$ are negative when $e_2\to a(\lambda)^+$ and $\lim_{e_2\to a(\lambda)^+ } {\mathtt n}_{2,\lambda}(e_2)$ is finite, 
	for every $\lambda<-2/\sqrt[4]{27}$.  The limit $\lim_{e_2\to a(\lambda)^+ } {\mathtt n}_{1,\lambda}(e_2)$ is finite,  for every $\lambda<-2/\sqrt[4]{27}$,
	$\lambda\neq -\sqrt[4]{\varphi^5}/2 $ and is $-\infty$ if  $\lambda= -\sqrt[4]{\varphi^5}/2 $.  In addition, if $\lambda\le -1$
	$$\lim_{e_2\to a(\lambda)^+ } {\mathtt n}_{1,\lambda}(e_2)=\lim_{e_2\to a(\lambda)^+ } {\mathtt n}_{2,\lambda}(e_2)\,.$$
	\item The limit $\lim_{e_2\to a(\lambda)^+ } {\mathtt B}_{\lambda}(e_2)$ is finite, for every $\lambda\in (-1,-2/\sqrt[4]{27})$ and  $\lambda\neq -\sqrt[4]{\varphi^5}/2$, while it is $-\infty$ for every $\lambda\le -1$.
	\item The limit $\lim_{e_2\to a(\lambda)^+ } {\mathtt C}_{\lambda}(e_2)$ is finite, for every $\lambda\in (-1,-2/\sqrt[4]{27})$, and it is $+\infty$ for every $\lambda\le -1$.
	\item The limit $\lim_{e_2\to a(\lambda)^+ } c$ ($c=c(\lambda,e_2)$) is finite, for every $\lambda<-2/\sqrt[4]{27}$.  It is negative if $\lambda> -1$ and zero if $\lambda\le -1$.
	\item The limit $\lim_{e_2\to a(\lambda)^+ } c\,{\mathtt C}_{\lambda}(e_2)$ is finite for every $\lambda<-2/\sqrt[4]{27}$.  It is $0$ if $\lambda\le -1$ and it is negative if $\lambda>-1$.
	\item The limit $\lim_{e_2\to a(\lambda)^+ } {\mathtt A}_{\lambda}(e_2)$ is finite for every $\lambda<-2/\sqrt[4]{27}$.
	\item Let ${\mathtt Q}_{\lambda}$ be defined by
	\begin{equation}\label{Qfunction}
		{\mathtt Q}_{\lambda}={\mathtt A}_{\lambda}+\frac{{\mathtt B}_{\lambda}}{1-{\mathtt n}_{1,\lambda}}+\frac{{\mathtt C}_{\lambda}}{1-{\mathtt n}_{2,\lambda}}\,.
	\end{equation}
	Using $(iii)-(vi)$ of (\ref{rele1e2Lc}) and taking into account (\ref{ellipticmoduli}) and (\ref{coeff}) we have
	$${\mathtt Q}_{\lambda}=-\frac{{\mathtt  g}_{\lambda}e_2(e_2+2\lambda)}{1+4ce_2^2}\,.$$
	Hence, ${\mathtt Q}_{\lambda}$ is real-analytic, it is positive and tends to a finite limit when $e_2\to a(\lambda)^+$ and when $e_2\to \eta_+(\lambda)^-$ for every $\lambda <-2/\sqrt[4]{27}$.
\end{enumerate}

We now continue with the proofs of the remaining lemmas, which will show the limits of the period map $\mathcal{P}_\lambda$.

\begin{lemma}
Let $\lambda\in (-1,-2/\sqrt[4]{27})$. Then,
\[ \lim_{e_2\to a(\lambda)^+} {\mathcal P}_{\lambda}(e_2) = +\infty\,. \]
\end{lemma}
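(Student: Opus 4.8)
The plan is to trace the divergence of $\mathcal{P}_\lambda$ back to the logarithmic blow-up of the complete elliptic integral of the first kind as the orbit approaches the saddle point. For $\lambda\in(-1,-2/\sqrt[4]{27})$ we have $a(\lambda)=\eta_-(\lambda)$, so letting $e_2\to a(\lambda)^+$ means pushing the closed modified signature $\hat{\mathfrak S}_{\lambda,e_2}$ towards the homoclinic separatrix through the saddle ${\bf m}_-(\lambda)$; the inner roots $e_2,e_3$ of $Q_{\mathfrak p}$ coalesce and, by property (1), ${\mathtt m}_\lambda(e_2)\to1$. Since $\lambda\in(-1,-2/\sqrt[4]{27})$, for $e_2$ near $a(\lambda)^+$ the modulus ${\mathfrak p}$ lies in $\mathcal{T}_-$ (if $\lambda<-\sqrt[4]{\varphi^5}/2$) or in $\mathcal{T}_+$ (if $\lambda\ge-\sqrt[4]{\varphi^5}/2$); in either case $\mathcal{P}_\lambda(e_2)={\mathfrak I}_{\mathfrak p}+\varepsilon$ with $\varepsilon\in\{0,1\}$, so it suffices to show ${\mathfrak I}_{\mathfrak p}\to+\infty$.

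First I would isolate the logarithmic part of the representation \eqref{general}. Using ${\rm K}({\mathtt m})\sim\tfrac12\log\bigl(16/(1-{\mathtt m})\bigr)$ and, for fixed $n<1$, the expansion $\Pi(n,{\mathtt m})=\frac{1}{1-n}{\rm K}({\mathtt m})+R(n,{\mathtt m})$ as ${\mathtt m}\to1$, where
\[
R(n,{\mathtt m})=\int_0^{\pi/2}\Bigl[\tfrac{1}{1-n\sin^2\theta}-\tfrac{1}{1-n}\Bigr]\frac{d\theta}{\sqrt{1-{\mathtt m}\sin^2\theta}}
\]
stays bounded because the bracket vanishes to second order at $\theta=\pi/2$, I would regroup \eqref{general} as
\[
{\mathtt A}_\lambda{\rm K}({\mathtt m}_\lambda)+{\mathtt B}_\lambda\Pi({\mathtt n}_{1,\lambda},{\mathtt m}_\lambda)+{\mathtt C}_\lambda\Pi({\mathtt n}_{2,\lambda},{\mathtt m}_\lambda)={\mathtt Q}_\lambda\,{\rm K}({\mathtt m}_\lambda)+{\mathtt B}_\lambda R_1+{\mathtt C}_\lambda R_2,
\]
where $R_j=R({\mathtt n}_{j,\lambda},{\mathtt m}_\lambda)$ and the net coefficient of ${\rm K}$ is exactly ${\mathtt Q}_\lambda={\mathtt A}_\lambda+\frac{{\mathtt B}_\lambda}{1-{\mathtt n}_{1,\lambda}}+\frac{{\mathtt C}_\lambda}{1-{\mathtt n}_{2,\lambda}}$ from \eqref{Qfunction}. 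This is the decisive algebraic cancellation: although the individual $\Pi$-terms carry the poles $1/(1-{\mathtt n}_{j,\lambda})$, the combination collapses to ${\mathtt Q}_\lambda=-{\mathtt g}_\lambda e_2(e_2+2\lambda)/(1+4ce_2^2)$, which by property (8) is real-analytic, strictly positive, and has a finite positive limit as $e_2\to a(\lambda)^+$.

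It then follows that ${\mathfrak I}_{\mathfrak p}=\frac{2\sqrt{|c|}}{\pi}{\mathtt Q}_\lambda\,{\rm K}({\mathtt m}_\lambda)+\frac{2\sqrt{|c|}}{\pi}({\mathtt B}_\lambda R_1+{\mathtt C}_\lambda R_2)$. By property (5) the constant $c=c(\lambda,e_2)$ tends to a finite negative value, so $\sqrt{|c|}$ tends to a finite positive limit; together with property (8) this makes $\frac{2\sqrt{|c|}}{\pi}{\mathtt Q}_\lambda$ converge to a strictly positive constant, while ${\rm K}({\mathtt m}_\lambda)\to+\infty$, so the leading summand diverges to $+\infty$. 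It remains to show the remainder is bounded. For $\lambda\neq-\sqrt[4]{\varphi^5}/2$ this is immediate from properties (2), (3), (4) and (7): all of ${\mathtt A}_\lambda,{\mathtt B}_\lambda,{\mathtt C}_\lambda,{\mathtt n}_{1,\lambda},{\mathtt n}_{2,\lambda}$ admit finite limits and $R_1,R_2$ are bounded, whence ${\mathfrak I}_{\mathfrak p}\to+\infty$ and $\mathcal{P}_\lambda(e_2)\to+\infty$.

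The hard part will be the single exceptional value $\lambda=-\sqrt[4]{\varphi^5}/2$, at which the exceptional locus reaches the lower boundary ($c(\lambda)\to a(\lambda)$), forcing ${\mathtt n}_{1,\lambda}\to-\infty$ and ${\mathtt B}_\lambda\to\pm\infty$ simultaneously; here the naive remainder splitting involves an indeterminate double limit in $({\mathtt n}_{1,\lambda},{\mathtt m}_\lambda)$. For this case I would not split off $R_1$ but instead control the full product ${\mathtt B}_\lambda\Pi({\mathtt n}_{1,\lambda},{\mathtt m}_\lambda)$ directly, using the asymptotics $\Pi({\mathtt n}_{1,\lambda},{\mathtt m}_\lambda)\sim\pi/(2\sqrt{1-{\mathtt n}_{1,\lambda}})$ together with the finiteness of $\lim\sqrt{|c|}\,{\mathtt B}_\lambda/\sqrt{1-{\mathtt n}_{1,\lambda}}$ already computed in the proof of the previous lemma (Step II), which shows this term stays bounded so that the divergence of $\frac{2\sqrt{|c|}}{\pi}{\mathtt Q}_\lambda\,{\rm K}({\mathtt m}_\lambda)$ still dominates. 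Thus in every case ${\mathfrak I}_{\mathfrak p}\to+\infty$, and the lemma follows.
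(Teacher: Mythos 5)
Your argument for generic multipliers $\lambda\in(-1,-2/\sqrt[4]{27})$ with $\lambda\neq-\sqrt[4]{\varphi^5}/2$ is correct and is essentially the paper's proof in different notation: writing $\Pi(n,\mathtt{m})=\tfrac{1}{1-n}{\rm K}(\mathtt{m})+R(n,\mathtt{m})$ with a remainder bounded for $n$ in a compact subset of $(-\infty,0]$ is equivalent to invoking 906.03 of Byrd--Friedman, and the coefficient you extract for the divergent ${\rm K}(\mathtt{m}_\lambda)$ is exactly the quantity $\mathtt{Q}_{\lambda}$ of \eqref{Qfunction}, which is what the paper's asymptotic \eqref{asymp3} isolates before using Properties (2)--(8) to control the rest. (Both you and the paper tacitly need $\lim\sqrt{|c|}\,\mathtt{Q}_\lambda$ to be strictly positive rather than merely finite; this does follow from the closed form $\mathtt{Q}_\lambda=-\mathtt{g}_\lambda e_2(e_2+2\lambda)/(1+4ce_2^2)$, so it is not a real obstruction.)

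The gap is in the exceptional value $\lambda=-\sqrt[4]{\varphi^5}/2$. You claim the product $\mathtt{B}_\lambda\,\Pi(\mathtt{n}_{1,\lambda},\mathtt{m}_\lambda)$ stays \emph{bounded}, using the asymptotic $\Pi(n,m)\sim\pi/(2\sqrt{1-n})$ as $n\to-\infty$. That asymptotic is valid for \emph{fixed} $m\in(0,1)$, but here $\mathtt{m}_\lambda\to1^-$ simultaneously with $\mathtt{n}_{1,\lambda}\to-\infty$, and the two limits do not commute: schematically, $\mathtt{B}_\lambda\Pi(\mathtt{n}_{1,\lambda},\mathtt{m}_\lambda)\approx\frac{\mathtt{B}_\lambda}{1-\mathtt{n}_{1,\lambda}}\log\bigl(4/\sqrt{1-\mathtt{m}_\lambda}\bigr)+\frac{\mathtt{B}_\lambda\sqrt{-\mathtt{n}_{1,\lambda}}\arctan\sqrt{-\mathtt{n}_{1,\lambda}}}{1-\mathtt{n}_{1,\lambda}}$. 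The second summand is bounded by the Step II input you cite, but the first is a $0\cdot\infty$ indeterminate product (the coefficient tends to $0$ because $\sqrt{|c|}\,\mathtt{B}_\lambda/\sqrt{1-\mathtt{n}_{1,\lambda}}$ is bounded while $\sqrt{1-\mathtt{n}_{1,\lambda}}\to\infty$, and the logarithm diverges), and nothing you have written rules out its being unbounded. There is also a bookkeeping slip: if you keep $\mathtt{B}_\lambda\Pi(\mathtt{n}_{1,\lambda},\mathtt{m}_\lambda)$ whole, the surviving main term has coefficient $\mathtt{Q}_\lambda-\mathtt{B}_\lambda/(1-\mathtt{n}_{1,\lambda})$ rather than $\mathtt{Q}_\lambda$ (harmless, since that correction tends to $0$). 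The conclusion nevertheless holds, and for the reason the paper uses: on a right neighborhood of $a(\lambda)$ one has $\mathtt{B}_\lambda>0$ and $\Pi(\mathtt{n}_{1,\lambda},\mathtt{m}_\lambda)>0$, so this term is eventually \emph{positive} and, bounded or not, can only reinforce the divergence of the leading term to $+\infty$. Replace your boundedness claim with this positivity observation and the proof closes.
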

\begin{proof} We begin by recalling some asymptotic behavior of elliptic integrals. From 906.03 at p. 302 of \cite{BF} we have the asymptotic expansion
	\begin{equation}\label{asymp1}
		\Pi(n,m)\sim \frac{1}{1-n}\left(\log(\frac{4}{\sqrt{1-m}})+\sqrt{-n}\arctan(\sqrt{-n})\right),\quad\quad {\rm as}\,\, m\to 1^-\,,
	\end{equation}
while from 112.01 at p. 11 of \cite{BF}	we have the standard asymptotic expansion 
	\begin{equation}\label{asymp2}
		{\rm K}(m)\sim \log\left(\frac{4}{\sqrt{1-m}} \right),\quad\quad {\rm as}\,\,\, m\to 1^-\,.
	\end{equation}
Then, from Property (1), (\ref{general}), (\ref{asymp1}) and (\ref{asymp2}) we have
	\begin{equation}\label{asymp3}
		{\mathfrak  I}_{\mathfrak{p}}\sim \frac{\sqrt{|c|\,}}{\pi}\left({\mathtt R}_{\lambda} + \log\left( \frac{4}{\sqrt{1-{\mathtt m}_{\lambda}}}\right){\mathtt Q}_{\lambda}\right),\quad\quad {\rm as}\,\,\,\ e_2\to a(\lambda)^+\,,
	\end{equation}
	where
	\begin{equation}\label{R}
		{\mathtt R}_{\lambda}=\frac{\sqrt{-{\mathtt n}_{1,\lambda}}\arctan(\sqrt{-{\mathtt n}_{1,\lambda}})}{1-{\mathtt n}_{1,\lambda}}{\mathtt B}_{\lambda}+\frac{\sqrt{-{\mathtt n}_{2,\lambda}}\arctan(\sqrt{-{\mathtt n}_{2,\lambda}})}{1-{\mathtt n}_{2,\lambda}}{\mathtt C}_{\lambda}\,.
	\end{equation}
	Next, using the Properties (2)-(6), we conclude that the limits 
	\[\lim_{e_2\to a(\lambda)^+ } \sqrt{|c|\,}{\mathtt R}_{\lambda}(e_2)\,,\quad\quad\quad \lim_{e_2\to a(\lambda)^+ } \sqrt{|c|\,}{\mathtt Q}_{\lambda}\]
	are finite, for every $\lambda\in (-1,-2/\sqrt[4]{27})$ and  $\lambda\neq -\sqrt[4]{\varphi^5}/2 $. It then follows from Property (1) and (\ref{asymp3}) that
	\[\lim_{e_2\to a(\lambda)^{+} } {\mathcal P}_{\lambda}(e_2)= \lim_{e_2\to a(\lambda)^{+} } {\mathfrak  I}_{\mathfrak{p}}=+\infty\,,\]
	for every $\lambda\in (-1,-2/\sqrt[4]{27})$ and  $\lambda \neq -\sqrt[4]{\varphi^5}/2$.  
	
	Consider now $\lambda =  -\sqrt[4]{\varphi^5}/2$.  Then $a(\lambda)=\varphi^{1/4}$ and the limits
	$$\lim_{e_2\to a(\lambda)^+ } \sqrt{|c|\,}{\mathtt Q}_{\lambda}\,,\quad\quad\quad
	\lim_{e_2\to a(\lambda) } \frac{\sqrt{-{\mathtt n}_{2,\lambda}}\arctan(\sqrt{-{\mathtt n}_{2,\lambda}})}{1-{\mathtt n}_{2,\lambda}}{\mathtt C}_{\lambda}
	$$
	exist and are positive. The functions ${\mathtt B}_{\lambda}$ and $\sqrt{-{\mathtt n}_{1,\lambda}}\arctan(\sqrt{-{\mathtt n}_{1,\lambda}})(1-{\mathtt n}_{1,\lambda})^{-1}$
	are positive on $(a(\lambda),  a(\lambda)+\epsilon)$,  for some $\epsilon >0$.  
	Consequently,  $ \sqrt{|c|\,}{\mathtt R}_{\lambda}$ is positive on $(a(\lambda),  a(\lambda)+\epsilon)$. Thus,  also in this case
	\[\lim_{e_2\to a(\lambda)^{+} } {\mathcal P}_{\lambda}(e_2)= \lim_{e_2\to a(\lambda)^{+} } {\mathfrak  I}_{\mathfrak{p}}=+\infty\,,\]
	as claimed.
\end{proof}

\begin{lemma} Let $\lambda\in (-\infty,-1]$. Then, 
	\[ \lim_{e_2\to a(\lambda)^+} {\mathcal P}_{\lambda}(e_2) = 1\,. \]
\end{lemma}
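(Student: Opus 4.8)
The plan is to reduce the claim to $\lim \mathfrak I_{\mathfrak p}=0$ and then to separate the cases $\lambda<-1$ and $\lambda=-1$. First observe that for $\lambda\le -1$ one has $a(\lambda)=\sqrt{\lambda^2-1}-\lambda=b_0(\lambda)$, the function of \eqref{br1} whose graph is $\mathcal L$. Hence letting $e_2\to a(\lambda)^+$ means approaching $\mathcal L$ from within $\mathcal T$, so $c=c(\lambda,e_2)\to 0^-$; moreover $\lambda<-\sqrt[4]{\varphi^5}/2$, so for $e_2$ near $a(\lambda)$ we have $e_2<c(\lambda)$, i.e. $\mathfrak p\in\mathcal T_-$ and $\mathcal P_\lambda(e_2)=\mathfrak I_{\mathfrak p}+1$. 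Thus it suffices to prove $\mathfrak I_{\mathfrak p}\to 0$ (at $c=0$ the associated curve is the non-closed BL-curve of Theorem \ref{TypeL}). From \eqref{coeff}, using $\sqrt{|c|}\to 0$ and the convergence of the roots $e_1,e_3,e_4$ and of $\mathtt g$ to their finite $c=0$ values, I record the limits $\sqrt{|c|}\,\mathtt A_\lambda\to 0$ (Property (7)), $\sqrt{|c|}\,\mathtt B_\lambda\to -L$ and $\sqrt{|c|}\,\mathtt C_\lambda\to +L$ with the \emph{same} $L=\mathtt g(e_1-e_4)/4>0$, while $\mathtt n_{1,\lambda}$ and $\mathtt n_{2,\lambda}$ tend to a common negative limit (Property (2)).

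For $\lambda<-1$ the modulus $\mathtt m_\lambda$ tends to a value $<1$ (Property (1)), so $\mathrm K(\mathtt m_\lambda)$ and $\Pi(\mathtt n_{j,\lambda},\mathtt m_\lambda)$ converge to finite limits, the latter two to a common value $\Pi_*$. Passing to the limit in \eqref{general}, the term $\sqrt{|c|}\,\mathtt A\,\mathrm K$ vanishes while $\sqrt{|c|}\,\mathtt B\,\Pi(\mathtt n_1,\mathtt m)+\sqrt{|c|}\,\mathtt C\,\Pi(\mathtt n_2,\mathtt m)\to(-L+L)\Pi_*=0$; hence $\mathfrak I_{\mathfrak p}\to 0$ and $\mathcal P_\lambda\to 1$. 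This case is thus settled by a simple cancellation of the two divergent coefficients $\mathtt B_\lambda,\mathtt C_\lambda$ against the single small factor $\sqrt{|c|}$.

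The delicate case, and the main obstacle, is $\lambda=-1$: here $\mathtt m_\lambda\to 1$, the wavelength diverges and $\mathrm K,\Pi$ blow up, so I instead invoke the expansion \eqref{asymp3}, namely $\mathfrak I_{\mathfrak p}\sim\frac{\sqrt{|c|}}{\pi}\big(\mathtt R_\lambda+\log(4/\sqrt{1-\mathtt m_\lambda})\,\mathtt Q_\lambda\big)$, and control two rates. Evaluating $Q_{-1,c}$ at its root $e_2$ yields the factorization $4c\,e_2^2=(e_2-1)^2(e_2^2-2e_2-1)$, so $|c|\sim(e_2-1)^2/2$ and $\sqrt{|c|}=O(e_2-1)$; and from $1-\mathtt m=(e_1-e_4)(e_2-e_3)/[(e_1-e_3)(e_2-e_4)]$ together with $e_2,e_3\to 1$ one gets $1-\mathtt m=O(e_2-1)$, whence $\log(4/\sqrt{1-\mathtt m})=O(\log\tfrac{1}{e_2-1})$. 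Since $\mathtt Q_\lambda$ has a finite limit (Property (8)) and $t\log(1/t)\to 0$, the logarithmic term contributes $0$; and $\sqrt{|c|}\,\mathtt R_\lambda\to 0$ by the same cancellation as before, because in \eqref{R} the two bounded factors $\sqrt{-\mathtt n_{j}}\arctan(\sqrt{-\mathtt n_{j}})/(1-\mathtt n_{j})$ share a common limit while $\sqrt{|c|}\,\mathtt B_\lambda\to -L$ and $\sqrt{|c|}\,\mathtt C_\lambda\to +L$. This gives $\mathfrak I_{\mathfrak p}\to 0$, hence $\mathcal P_{-1}\to 1$.

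The heart of the argument is precisely this last balancing. Although each of $\mathrm K(\mathtt m)$, $\Pi(\mathtt n_1,\mathtt m)$, $\Pi(\mathtt n_2,\mathtt m)$, $\mathtt B_\lambda$, $\mathtt C_\lambda$ diverges as $e_2\to 1^+$, the power-law vanishing of $c$ (slow enough only logarithmically to be outrun by the elliptic integrals, which it is) together with the coincidence of the limits of $\mathtt n_{1,\lambda}$ and $\mathtt n_{2,\lambda}$ forces the product to tend to zero. I expect the genuinely non-routine step to be the verification of the two asymptotic rates $|c|=O((e_2-1)^2)$ and $1-\mathtt m=O(e_2-1)$ at the tangency point $(\lambda,e_2)=(-1,1)$ of the line $\lambda=-1$ with $\mathcal L$, and the accompanying check that the boundary limits $\sqrt{|c|}\,\mathtt B_\lambda$, $\sqrt{|c|}\,\mathtt C_\lambda$ are equal in magnitude; everything else is bookkeeping with the Properties (1)--(8).
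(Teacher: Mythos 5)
Your proof is correct and follows the same skeleton as the paper's (same case split $\lambda<-1$ versus $\lambda=-1$, same use of \eqref{general} in the first case and of the asymptotic expansion \eqref{asymp3} in the second), but the two key local steps are handled differently, and in both instances your route is more elementary. For the cancellation, the paper proves $\lim_{e_2\to a(\lambda)^+}(\mathtt B_\lambda+\mathtt C_\lambda)=0$ via the root identity $e_{\lambda,1}+e_{\lambda,4}+2\lambda\to 0$ and then pairs this with the finiteness of $\sqrt{|c|}\,\mathtt C_\lambda$; you instead read off directly from \eqref{coeff} that $\sqrt{|c|}\,\mathtt B_\lambda\to -L$ and $\sqrt{|c|}\,\mathtt C_\lambda\to +L$ with the same $L=\mathtt g(e_1-e_4)/4$, since every factor $(1\pm 4\sqrt{|c|}\lambda)$, $(1\pm2\sqrt{|c|}e_j)$ tends to $1$ as $c\to 0^-$. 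Combined with the common limit of $\mathtt n_{1,\lambda},\mathtt n_{2,\lambda}$ (hence of the two $\Pi$'s, resp.\ of the two bracket factors in \eqref{R}), this gives the same conclusion without the root identity. For the rate estimate at $\lambda=-1$, the paper asserts $\lim_{e_2\to 1^+}c/(1-\mathtt m_{-1})=0$ and relegates it to a Mathematica computation; your factorization $4ce_2^2=(e_2-1)^2(e_2^2-2e_2-1)$ together with $1-\mathtt m=(e_1-e_4)(e_2-e_3)/[(e_1-e_3)(e_2-e_4)]$ replaces that with a hand calculation, which is a genuine improvement in self-containedness. One imprecision to fix: you write $1-\mathtt m=O(e_2-1)$ and deduce $\log(4/\sqrt{1-\mathtt m})=O(\log\frac{1}{e_2-1})$, but an upper bound on $1-\mathtt m$ gives a \emph{lower} bound on the logarithm; what you actually need (and what your own formula delivers, since the outer factors tend to nonzero constants and $e_2-e_3\ge e_2-1$ because $e_3\to 1^-$) is the lower bound $1-\mathtt m\gtrsim e_2-1$, i.e.\ the two-sided estimate $1-\mathtt m\asymp e_2-1$. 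With that stated correctly, the argument is complete.
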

\begin{proof} From $(v)$ and $(vi)$ of (\ref{coeff}) we have
	\begin{equation}\label{B+C}{\mathtt B}_{\lambda}+{\mathtt C}_{\lambda}=\frac{{\mathtt g}_{\lambda}(e_{\lambda,1}-e_{\lambda,4})(e_{\lambda,4}(8c\lambda e_{\lambda,1}-1)-e_{\lambda,1}-2\lambda)}{(1+4ce_{\lambda,1}^2)(1+4ce_{\lambda,4}^2)}\,.
	\end{equation}
	Using  (\ref{rele1e2Lc}) and the characterization of Remark \ref{lemma4Modulispaces} we deduce that
	$$ \lim_{e_2\to a(\lambda)^{+} } e_{\lambda,4}(e_2)+e_{\lambda,1}(e_2)+2\lambda=0\,,\quad\quad\quad \forall \lambda\le -1\,.$$
	This implies
	\begin{equation}\label{ort1}\lim_{e_2\to a(\lambda)^{+} } ({\mathtt B}_{\lambda}(e_2)+{\mathtt C}_{\lambda}(e_2))=0\,,\quad\quad\quad \forall \lambda\le -1\,.
	\end{equation}
	Assume first $\lambda = -1$.   From (\ref{ort1}) we have
	$${\mathtt R}_{\lambda}\sim \left(\frac{\sqrt{-{\mathtt n}_{2,\lambda}}\arctan(\sqrt{-{\mathtt n}_{2,\lambda}})}{1-{\mathtt n}_{2,\lambda}} -
	\frac{\sqrt{-{\mathtt n}_{1,\lambda}}\arctan(\sqrt{-{\mathtt n}_{1,\lambda}})}{1-{\mathtt n}_{1,\lambda}}
	\right){\mathtt C}_{\lambda},
	$$ 
	as $e_2\to a(-1)^+$, $a(-1)=1$.  
	From $(v)$ of  (\ref{coeff}) it follows that  $\lim_{e_2\to 1^+ } \sqrt{|c|\,}{\mathtt C}_{-1}$ is finite.  Property (2) implies 
	$$ \lim_{e_2\to a(\lambda)^{+} } \left(\frac{\sqrt{-{\mathtt n}_{2,\lambda}}\arctan(\sqrt{-{\mathtt n}_{2,\lambda}})}{1-{\mathtt n}_{2,\lambda}} -
	\frac{\sqrt{-{\mathtt n}_{1,\lambda}}\arctan(\sqrt{-{\mathtt n}_{1,\lambda}})}{1-{\mathtt n}_{1,\lambda}}
	\right)=0\,,
	$$
	for every $\lambda\le -1$. Then, 
	\begin{equation}\label{first}\lim_{e_2\to 1^+ } \sqrt{|c|\,}{\mathtt R}_{-1}=0\,.
	\end{equation}
	On the other hand\footnote{This limit has been computed with the software Mathematica 13.1.}
	\begin{equation}\label{second}
		\lim_{e_2\to 1^+ } \frac{c}{1-{\mathtt m}_{-1}(e_2)}=0\,.
	\end{equation}
	Combining  (\ref{asymp3}), (\ref{first}) and (\ref{second}) we obtain
	$$ \lim_{e_2\to 1^+ } {\mathcal P}_{-1}(e_2)=\lim_{e_2\to 1^+ }{{\mathfrak  I}}_{\mathfrak{p}}+1=1\,.	$$
	Consider now the case $\lambda<-1$. Using Property (2) we have
	\begin{equation}\label{ort2}
		\lim_{e_2\to a(\lambda)^+ }{\mathtt n}_{1,\lambda}(e_2)=\lim_{e_2\to a(\lambda)^+ }{\mathtt n}_{2,\lambda}(e_2)<0\,,\quad\quad\quad\lim_{e_2\to a(\lambda)^+ }{\mathtt m}_{\lambda}(e_2)\in (0,1)\,.
	\end{equation}
	Then
	$$\lim_{e_2\to a(\lambda)^+ }\Pi\left({\mathtt n}_{1,\lambda}(e_2),{\mathtt m}_{\lambda}(e_2)\right)=
	\lim_{e_2\to a(\lambda)^+ }\Pi\left({\mathtt n}_{2,\lambda}(e_2),{\mathtt m}_{\lambda}(e_2)\right).$$
	Moreover, (\ref{ort2}) implies that the limits
	$$\lim_{e_2\to a(\lambda)^+ }\Pi({\mathtt n}_{1,\lambda}(e_2),{\mathtt m}_{\lambda}(e_2))\,,\quad\quad\quad \lim_{e_2\to a(\lambda)^+ }{\rm K}({\mathtt m}_{\lambda}(e_2))
	$$
	are finite for every $\lambda<-1$.   From $(v)$ of  (\ref{coeff}) it follows that  $\lim_{e_2\to a(\lambda)^+ } \sqrt{|c|\,}{\mathtt C}_{\lambda}$ is finite for every $\lambda<-1$. Properties (5) and (7) implies that  $\lim_{e_2\to a(\lambda)^+ } {\mathtt A}_{\lambda}$ is also finite, and that $\lim_{e_2\to a(\lambda)^+ } \sqrt{|c|\,}=0$ for every $\lambda<-1$.  Then,  using (\ref{ort1}), we have
	\[\begin{split}{\mathfrak I}_{\mathfrak{p}}&=\frac{2\sqrt{|c|\,}}{\pi}\left({\mathtt A}_{\lambda}{\rm K}({\mathtt m}_{\lambda}) +{\mathtt B}_{\lambda}\Pi({\mathtt n}_{1,\lambda},{\mathtt m}_{\lambda})+{\mathtt C}_{\lambda}\Pi({\mathtt n}_{2,\lambda},{\mathtt m}_{\lambda}) \right)\\
		&\sim_{e_2\to a(\lambda)^+}  \frac{2\sqrt{|c|\,}}{\pi}\left(\Pi({\mathtt n}_{2,\lambda},{\mathtt m}_{\lambda}) - \Pi({\mathtt n}_{1,\lambda},{\mathtt m}_{\lambda}) \right){\mathtt C}_{\lambda}\,,
	\end{split}\]
	for every $\lambda<-1$. This implies
	$$ \lim_{e_2\to a(\lambda)^+} {\mathcal P}_{\lambda}(e_2)=\lim_{e_2\to 1^+ }{{\mathfrak  I}}_{\mathfrak{p}}+1=1\,,\quad\quad\quad \forall \lambda<-1\,,$$
	concluding the proof.
\end{proof}

\begin{lemma} Let $\lambda<-2/\sqrt[4]{27}$. Then, 
	\[ \lim_{e_2\to \eta_+(\lambda)^-} {\mathcal P}_{\lambda}(e_2) = \chi(\lambda)\,, \]
	where $chi(\lambda)$ is the function defined in Definition \ref{4.3}.
\end{lemma}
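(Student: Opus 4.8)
The plan is to read the limit straight off the elliptic representation \eqref{general} of the period map, using the fact that near the centre all the elliptic data degenerate in the mildest possible way. Write $\eta=\eta_+(\lambda)$, the larger root of $P_\lambda(x)=x^4+2\lambda x^3+1$ in \eqref{quarticP}, so that $2\lambda=-(\eta^4+1)/\eta^3$ and $\eta^4>3$ for $\lambda<-2/\sqrt[4]{27}$. First I would note that a left neighbourhood of $e_2=\eta$ inside ${\rm I}_\lambda$ lies in ${\mathcal T}_+\setminus{\mathcal E}$: the exceptional value $c(\lambda)$ stays strictly below $\eta$, so there ${\mathcal P}_\lambda(e_2)={\mathfrak I}_{\mathfrak p}$ with ${\mathfrak I}_{\mathfrak p}$ as in \eqref{general} (no additive constant, since we are in ${\mathcal T}_+$). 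Hence it suffices to evaluate $\lim_{e_2\to\eta^-}{\mathfrak I}_{\mathfrak p}$.

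The decisive geometric input is that, as $e_2\to\eta^-$, the signature contracts to the centre $(\eta,0)$, forcing the two largest roots of $Q_{\mathfrak p}$ to coalesce, $e_1,e_2\to\eta$. I would make this rigorous from the cubic \eqref{cb} defining $e_1$: substituting $x=e_2$ into \eqref{cb} produces $2e_2\,P_\lambda(e_2)$, which vanishes precisely at $e_2=\eta$, so the root $e_1>e_2$ of \eqref{cb} tends to $\eta$. Passing to the limit in (iii), (iv), (vi) of \eqref{rele1e2Lc} gives the finite limits
\[
e_3\to\frac{1+\sqrt{1+\eta^4}}{\eta^3},\qquad e_4\to\frac{1-\sqrt{1+\eta^4}}{\eta^3},\qquad c\to c_0=\frac{1-\eta^4}{4\eta^6}<0.
\]
Because $e_1-e_2\to0$ while $e_3,e_4$ stay distinct from $\eta$, the data \eqref{ellipticmoduli} and \eqref{coeff} satisfy ${\mathtt m}\to0$ and ${\mathtt n}_1,{\mathtt n}_2\to0$; en route one checks that ${\mathtt A},{\mathtt B},{\mathtt C},{\mathtt g}$ remain finite, the only denominators that could threaten this being $1\mp2\sqrt{|c|}\,e_1$, which stay bounded away from $0$ off ${\mathcal E}$.

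With ${\mathtt m},{\mathtt n}_1,{\mathtt n}_2\to0$ I would invoke the elementary values ${\rm K}(0)=\Pi(0,0)=\pi/2$ in \eqref{general} to obtain $\lim{\mathfrak I}_{\mathfrak p}=\sqrt{|c_0|}\,({\mathtt A}_0+{\mathtt B}_0+{\mathtt C}_0)$, the subscript denoting the limit. The key simplification is Property (8): since ${\mathtt n}_1,{\mathtt n}_2\to0$, the combination in \eqref{Qfunction} gives ${\mathtt A}_0+{\mathtt B}_0+{\mathtt C}_0=\lim{\mathtt Q}_\lambda$ with ${\mathtt Q}_\lambda=-{\mathtt g}_\lambda e_2(e_2+2\lambda)/(1+4ce_2^2)$, so no individual coefficient need be computed. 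Together with ${\mathtt g}_0=2/\sqrt{(\eta-e_3)(\eta-e_4)}=2\eta/\sqrt{\eta^4-3}$ this yields
\[
\lim_{e_2\to\eta^-}{\mathfrak I}_{\mathfrak p}=-\sqrt{|c_0|}\,\frac{{\mathtt g}_0\,\eta(\eta+2\lambda)}{1+4c_0\eta^2}.
\]

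Finally I would collapse this with $2\lambda=-(\eta^4+1)/\eta^3$, which produces the clean identities $\eta+2\lambda=-1/\eta^3$, $\;|c_0|=(\eta^4-1)/(4\eta^6)$ and $1+4c_0\eta^2=1/\eta^4$. Substituting gives
\[
\lim_{e_2\to\eta^-}{\mathcal P}_\lambda(e_2)=\frac{\sqrt{\eta^4-1}}{2\eta^3}\cdot\frac{2\eta^3}{\sqrt{\eta^4-3}}=\sqrt{\frac{\eta^4-1}{\eta^4-3}}=\chi(\lambda),
\]
the last equality because $\eta^8-4\eta^4+3=(\eta^4-1)(\eta^4-3)$ in Definition \ref{4.3}. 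I expect the main obstacle to be not this concluding algebra but the careful justification of the coalescence $e_1,e_2\to\eta$ together with the simultaneous vanishing ${\mathtt m},{\mathtt n}_1,{\mathtt n}_2\to0$ and the uniform finiteness of ${\mathtt A},{\mathtt B},{\mathtt C},{\mathtt g}$; once those limits are secured, Property (8) delivers the answer without any expansion of the complete integrals beyond their values at modulus zero.
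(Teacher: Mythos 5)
Your proposal is correct and follows the same skeleton as the paper's proof: both send ${\mathtt m},{\mathtt n}_1,{\mathtt n}_2\to 0$ as $e_2\to\eta_+(\lambda)^-$, replace ${\rm K}$ and $\Pi$ by $\pi/2$ in \eqref{general}, and reduce the problem to evaluating $\lim\sqrt{|c|}\,({\mathtt A}_\lambda+{\mathtt B}_\lambda+{\mathtt C}_\lambda)$. Where you diverge is the endgame: the paper computes the limits of ${\mathtt A}_\lambda$, ${\mathtt B}_\lambda$, ${\mathtt C}_\lambda$ individually (its formulas (ii)--(iv)) and substitutes, whereas you observe that once ${\mathtt n}_{1,\lambda},{\mathtt n}_{2,\lambda}\to 0$ the sum coincides with the limit of ${\mathtt Q}_\lambda$ from \eqref{Qfunction}, whose closed form $-{\mathtt g}_\lambda e_2(e_2+2\lambda)/(1+4ce_2^2)$ is already available from Property (8); this bypasses the three messy individual limits entirely and is genuinely cleaner. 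You also make explicit two points the paper leaves implicit: the coalescence $e_1\to\eta_+(\lambda)$ via the factorization of the cubic \eqref{cb} at $x=e_2$, and the fact that a left neighbourhood of $\eta_+(\lambda)$ lies in ${\mathcal T}_+\setminus{\mathcal E}$ so that no additive constant and no vanishing denominator $1-2\sqrt{|c|}\,e_1$ intervenes. One further remark in your favour: your value $c\to(1-\eta^4)/(4\eta^6)<0$ is the correct limit from (vi) of \eqref{rele1e2Lc} (it must be negative on ${\mathcal T}$), whereas the paper's displayed limit $(i)$, namely $(\eta_+^4-1)/\eta_+^6>0$, is evidently a typo; your final algebra nonetheless lands exactly on $\chi(\lambda)$.
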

\begin{proof} From
\[\lim_{e_2\to \eta_+(\lambda)^{-} } {\mathtt m}_{\lambda}(e_2)= \lim_{e_2\to \eta_+(\lambda)^{-} } {\mathtt n}_{1,\lambda}(e_2)= 
\lim_{e_2\to \eta_+(\lambda)^{-} } {\mathtt n}_{2,\lambda}(e_2)=0\,,\]
it follows that
\[\begin{split}&\lim_{e_2\to \eta_+(\lambda)^{-} } {\rm K}({\mathtt m}_{\lambda}(e_2))=\lim_{e_2\to \eta_+(\lambda)^{-} } \Pi({\mathtt n_{1,\lambda}}(e_2),{\mathtt m}_{\lambda}(e_2))=\\&= \lim_{e_2\to \eta_+(\lambda)^{-} } \Pi({\mathtt n_{2,\lambda}}(e_2),{\mathtt m}_{\lambda}(e_2))=\frac{\pi}{2}\,.\end{split}\]
Then,
\begin{equation}\label{lim5}\lim_{e_2\to \eta_+(\lambda)^{-} } {\mathcal P}_{\lambda}=\lim_{e_2\to \eta_+(\lambda)^{-} }{{\mathfrak  I}}_{\mathfrak{p}}=
	\lim_{e_2\to \eta_+(\lambda)^{-} }
	\sqrt{|c|\,} ({\mathtt A}_{\lambda} +{\mathtt B}_{\lambda}+{\mathtt C}_{\lambda})\,.
\end{equation}
From (\ref{rele1e2Lc}) and (\ref{coeff}) we obtain the following limits:
\[\begin{split}
	&(i) \lim_{e_2\to \eta_+(\lambda)^{-} } c=\frac{\eta_+(\lambda)^4-1}{\eta_+(\lambda)^6}\,,\\
	&(ii) \lim_{e_2\to \eta_+(\lambda)^{-} } {\mathtt A}_{\lambda}=\frac{2\eta_+(\lambda)^3(1+\sqrt{1+\eta_+(\lambda)^4}(1-\eta_+(\lambda)^4))}{\sqrt{\eta_+(\lambda)^4-3}(\eta_+(\lambda)^8-\eta_+(\lambda)^4-1)},\\
	&(iii) \lim_{e_2\to \eta_+(\lambda)^{-} } {\mathtt B}_{\lambda}=\frac{\eta_+(\lambda)^3(\sqrt{\eta_+(\lambda)^4-1}(1+\eta_+(\lambda)^4)-\eta_+(\lambda)^6)(2+\sqrt{1+\eta_+(\lambda)^4})}
	{\sqrt{3+\eta_+(\lambda)^8-4\eta_+(\lambda)^4}(1+\eta_+(\lambda)^4\sqrt{1+\eta_+(\lambda)^4}-\eta_+(\lambda)^2\sqrt{\eta_+(\lambda)^8-1}},\\
	& (iv)\lim_{e_2\to \eta_+(\lambda)^{-} } {\mathtt C}_{\lambda}=\frac{\eta_+(\lambda)^3(\sqrt{\eta_+(\lambda)^4-1}(1+\eta_+(\lambda)^4)+\eta_+(\lambda)^6)(2+\sqrt{1+\eta_+(\lambda)^4})}
	{\sqrt{3+\eta_+(\lambda)^8-4\eta_+(\lambda)^4}(1+\eta_+(\lambda)^4\sqrt{1+\eta_+(\lambda)^4}-\eta_+(\lambda)^2\sqrt{\eta_+(\lambda)^8-1}}
\end{split}
\]
Substituting $(i)$-$(iv)$ in (\ref{lim5}) we conclude
\[ \lim_{e_2\to \eta_+(\lambda)^-} {\mathcal P}_{\lambda}(e_2) = \chi(\lambda)\,,\]
as stated.
\end{proof}

Finally, the proof of Theorem \ref{existence} follows from the limits shown in previous lemmas in combination with the continuity of the period map $\mathcal{P}_\lambda$.

\bibliographystyle{amsalpha}

\end{document}